\def\acts{\curvearrowright}
\DeclareMathOperator\Sc{s}
\DeclareMathOperator\C{\mathbb C}
\DeclareMathOperator\Z{\mathbb Z}
\DeclareMathOperator\N{\mathbb N}
\DeclareMathOperator\T{\mathbb T}
\DeclareMathOperator\Hom{\mathrm Hom}
\DeclareMathOperator\Gr{\mathrm Gr}
\DeclareMathOperator\I{\mathcal I}
\DeclareMathOperator\F{\mathcal F}
\DeclareMathOperator\codim{codim}
\DeclareMathOperator\SSS{\mathcal S}
\DeclareMathOperator\E{\mathds 1}
\DeclareMathOperator\A{\mathbb A}
\DeclareMathOperator\RES{RES}
\DeclareMathOperator\sym{Sym}
\DeclareMathOperator\PPP{\mathcal P}
\def\Res#1{\mathop{\mathrm{Res}}_{#1}}
\newcommand\Fcsm{{\F}^{csm}}
\newcommand\Fssm{{\F}^{ssm}}
\newcommand{\aalpha}{{\boldsymbol \alpha}}
\newcommand{\bbeta}{{\boldsymbol \beta}}
\newcommand{\zz}{{\boldsymbol z}}
\newcommand{\tssm}{ \tilde{\Sc} }
\newcommand{\csm}{ c^{sm}}
\newcommand{\ssm}{ s^{sm}}
\newtheorem{fact}{Fact}[section]
\newtheorem{lemma}[fact]{Lemma}
\newtheorem{theorem}[fact]{Theorem}
\newtheorem{definition}[fact]{Definition}
\newtheorem{example}[fact]{Example}
\newtheorem{rremark}[fact]{Remark}
\newenvironment{remark}{\begin{rremark} \rm}{\end{rremark}}
\newtheorem{proposition}[fact]{Proposition}
\newtheorem{corollary}[fact]{Corollary}
\newtheorem{conjecture}[fact]{Conjecture}
\author{L. M. Feh\'er}
\address{E\"otv\"os University, Budapest, Hungary}
\email{lfeher@cs.elte.hu}
\author{R. Rim\'anyi}
\address{Department of Mathematics, University of North Carolina at Chapel Hill, USA}
\email{rimanyi@email.unc.edu}
\title{Chern-Schwartz-MacPherson classes of degeneracy loci}
\begin{document}

\begin{abstract}
The Chern-Schwartz-MacPherson class (CSM) and the Segre-Schwartz-MacPherson class (SSM) are deformations of the fundamental class of an algebraic variety. They encode
finer enumerative invariants of the variety than its fundamental class. In this paper we offer three contributions to the theory of equivariant CSM/SSM classes. First, we
prove an interpolation characterization for CSM classes of certain representations. This method---inspired by recent works of Maulik-Okounkov and
Gorbounov-Rimanyi-Tarasov-Varchenko---does not require a resolution of singularities and often produces explicit (not sieve) formulas for CSM classes. Second, using the
interpolation characterisation we prove explicit formulas---including residue generating sequences---for the CSM and SSM classes of matrix Schubert varieties. Third, we
suggest that a stable version of the SSM class of matrix Schubert varieties will serve as the building block of equivariant SSM theory, similarly to how the Schur
functions are the building blocks of fundamental class theory. We illustrate these phenomena, and related stability and (2-step) positivity properties for some relevant
representations.
\end{abstract}

\maketitle


\section{Introduction}

\subsection{Degeneraci loci, fundamental class, Schur expansion}

Many interesting varieties in geometry occur as {\em degeneracy locus} varieties, a notion we recall now. Let $\Sigma\subset V$ be an invariant variety of a
$G$-representation $V$. Let $E\to M$ be a vector bundle over a smooth variety $M$ with fiber $V$ and structure group $G$. If $\Sigma(E)$ is the  union of the $\Sigma$'s in
each fiber then the subvariety $X=\sigma^{-1}( \Sigma(E) )$ for a section $\sigma$ transversal to $\Sigma(E)$ is called a {\em degeneracy locus}. Areas of geometry where
degeneraci loci are abundant include Schubert calculus, moduli spaces, and singularity theory.

The general strategy of studying numerical invariants of degeneracy loci is associating a ``universal'' $G$-characterisitic class to the {\em local} situation $\Sigma\subset
V$, and expecting that the sought numerical invariant of $X$ is obtained by evaluating the universal characteristic class at the bundle $E \to M$. The key example of this
strategy is the {\em fundamental class}
$[X]\in H^{*}(M)$. One defines the $G$-equivariant fundamental class $[\Sigma]\in H^{*}_G(V)=H^{*}(BG)$, and it is a fact that $[X]\in H^*(M)$ can be calculated as
$[\Sigma]$ evaluated at the bundle $E\to M$.

Hence, equivariant fundamental classes $[\Sigma]\in H_G^*(V)$ and their applications have been intensively studied in numerous parts of geometry. Two 
 interesting sets of examples are (a) quiver representations (where the fundamental class is often called quiver polynomial), (b) singularity theory (where the fundamental
 class is called Thom polynomial). In the intersection of these two sets of examples is the Giambelli-Thom-Porteous formula \cite{porteous} for the fundamental class of the orbit closures of
 the representation $GL_k(\C)\times GL_n(\C)$ acting on $\Hom(\C^k,\C^n)$.

Fundamental classes in both of these sets of examples above show interesting patterns, namely stabilization and positivity properties. Stabilization properties are displayed
by the fact that the classes can be encoded by (generalized, so-called iterated residue) generating sequences, see e.g. \cite{bsz,noAss,rr}. Positivity means that the
coefficients in appropriate Schur expansions of the classes are non-negative, see e.g. \cite{buch,pw}.

\subsection{The Schwartz-MacPherson deformation of the fundamental class}

The notion of fundamental class $[X]\in H^{\codim (X \subset M)}(M)$ has a deformation \cite{M}, which comes in two versions called Chern-Schwartz-MacPherson (CSM) and
Segre-Schwartz-MacPherson (SSM) classes. The two versions only differ by an explicit factor. (Another name for the CSM class, after homogenization by a new variable $\hbar$,
is `characteristic cycle' class.) The CSM/SSM classes, $\csm(X), \ssm(X)$ are inhomogeneous elements of $H^*(M)$. The lowest degree part of both of them is the fundamental
class $[X]$. The CSM/SSM classes encrypt more intristic information about the variety $X$ than the fundamental class; and their applications in enumerative geometry is hard
to overestimate, see e.g. \cite[Ch. 5]{fultonbook}. P. Aluffi  writes {\em ``Segre classes provide a systematic framework for enumerative geometry computation; but this is
of relatively little utility, as Segre classes are in general hard to compute''} \cite{ACCSV}. More recent applications of CSM classes (under the name of ``stable envelope
classes'') are in \cite{MO, Ok, RTV2} and references therein.

Ohmoto \cite{O1,O2} showed that the above mentioned equivariant strategy works for the SSM class. One can associate a universal characteristic class $\ssm(\Sigma\subset V)$
to the situation $G\acts(\Sigma \subset V)$ and the SSM class of the degeneracy locus $X$ is this characteristic class evaluated at the defining bundle, see Theorem
\ref{thm:ohmoto} below. Hence, CSM/SSM theory of degeneracy loci is reduced to finding CSM/SSM classes for invariant varieties in {\em representations}. In the present paper
we offer three contributions to this problem, described in the next three subsections.

\subsection{Interpolation characterization of CSM classes in representations}
The classical approach to calculating either the fundamental class or the CSM/SSM class of $\Sigma\subset V$ (acted upon by the group $G$) is resolution of singularities.
Resolutions are not known for
important  examples of quivers or contact singularities. Even if a resolution is known, this method produces exclusion-inclusion type formulas that hide both the
stabilization and positivity structures  of CSM/SSM classes. In the past decades new and effective methods of calculating fundamental classes were found. One of these new
methods is interpolation \cite{rrinv, br}: one lists a few interpolation condition that $[\Sigma]$ and only $[\Sigma]$ satisfies.

In this paper we prove that interpolation characterisation exists for CSM classes of orbits of certain representations, see Theorem~\ref{thm:intchar} below. We expect that
this new interpolation method will open the way to find CSM/SSM classes for quivers, matroids, singularities; similarly to what happened for the fundamental class in the
past two decades.

Although formally we will not use it, let us comment on the the origin of this interpolation theorem. In works of  Maulik-Okounkov \cite{MO} and
Gorbounov-Rimanyi-Tarasov-Varchenko \cite{GRTV, RTV} two seemingly unrelated modules are identified: (i) the regular representation of the equivariant cohomology algebra of
certain symmetric spaces, and  (ii) the Bethe algebra of certain quantum integrable systems.
On both sides of this identification there is a ``given'' basis and a ``sought'' basis. On the physics side the coordinate basis (a.k.a. spin basis) is given, and one seeks
formulas for the Bethe basis. On the geometry side the basis of torus fixed points is given, and one seeks formulas for classes coming from Schubert varieties. Our
identification matches the given basis on one side with the sought basis of the other side. In particular, the coordinate basis of the Bethe algebra is matched with the CSM
classes of Schubert cells \cite{RV}.
Our Theorem~\ref{thm:intchar} stems from this fact, and it is a version of the main theorem of \cite{RV}, modified from Schubert calculus settings to representations with
finitely many orbits satisfying the Euler condition.

\subsection{CSM/SSM classes of matrix Schubert cells: weight functions, generating functions}

The building blocks of the algebraic combinatorics of fundamental classes for both quivers and for singularities are the Schur functions. Schur functions are the fundamental
classes of so-called matrix Schubert varieties \cite{ss,km}. In Sections \ref{sec:csmW}-\ref{sec:GFpartition} we calculate the CSM and SSM classes of matrix Schubert cells.
Our formulas are not of inclusion-exclusion type; rather, some of our formulas are of ``localization type'' (inherently displaying interpolation properties), and others are
iterated residue generating sequences (inherently displaying stabilization properties of their Schur expansions).

It is important to emphasize that the connection between the CSM classes of {\em ordinary} Schubert cells and CSM classes of matrix Schubert cells are more complex than for
the corresponding fundamental classes (which is the lowest degree part of the CSM class). In Appendix B, Section \ref{appB} we summarize the differences and relations
between the CSM theory of Schubert cells and matrix Schubert cells.

In Section \ref{sec:GFpartition} we make a conjecture about the signs of the Schur expansions of the SSM classes of matrix Schubert cells. We are not aware of a direct
relation between this conjecture and the positivity theorem on CSM classes of ordinary Schubert cells conjectured by Aluffi-Mihalcea \cite{AM1} (see also \cite{AM2}) and
proved by Huh \cite{huh}---for more comments see Appendix B, Section \ref{appB}.

\subsection{Conjectured two-step positivity of SSM classes}
As mentioned, the building blocks of cohomological fundamental class theory are the Schur functions: When a fundamental class of a geometrically relevant variety is expanded
in appropriate Schur functions, the coefficients are often non-negative, see e.g. \cite{buch, pw}.

Schur positivity (or alternating signs) of CSM/SSM classes break down in the simplest examples. For example, according to \cite{PP}, a certain SSM class of the $A_2$ quiver
representation is (for notations see Section \ref{sec:sym})
\begin{align}\label{i1}
\ssm(\Sigma^0_{n,n+1})=&
\Sc_0+(-\Sc_2)+(2\Sc_3+\Sc_{21})+(-3\Sc_4-3\Sc_{31}-\Sc_{211})+(4\Sc_5+6\Sc_{41}+4\Sc_{311}+\Sc_{2111})\\
\nonumber &+(-5\Sc_6-10\Sc_{51}-10\Sc_{411}+\Sc_{33}-5\Sc_{3111}-\Sc_{21111})+\ldots.
\end{align}
The signs of the term $\Sc_0$, and also of $\Sc_{33}$ violate the pattern (and many more in higher degrees).

Yet, we conjecture that there is a sign pattern. In fact we expect that the following {\em two-step positivity} property holds in general, for quivers, singularities, and
maybe other geometrically relevant varieties:
\begin{itemize}
\item {\em The SSM classes of geometrically interesting varieties, expanded in the new building blocks, the $\tssm_\lambda$ functions of Section \ref{sec:GFpartition},
    have non-negative coefficients.}
\item {\em The coefficients of the Schur expansions of the $\tssm_\lambda$ functions have alternating signs.}
\end{itemize}

The second line is our Conjecture \ref{conj:positivity}. We prove the first line in two cases, one is Theorem \ref{thm:genOmega}, and the other is the $A_2$ quiver
representation in Section \ref{sec:A2}. For example, according to Theorem~\ref{thm:main} we have the positive $\tssm_\lambda$ expansion
\begin{equation}\label{i2}
\ssm(\Sigma^0_{n,n+1})=
\tssm_{0}+\tssm_{1}+\tssm_{11}+\tssm_{111}+\ldots,
\end{equation}
and the $\tssm_\lambda$ functions have alternating Schur expansions
\begin{align}\label{i3}
\tssm_0=&
\Sc_0+(-\Sc_1)+(\Sc_2+\Sc_{11})+(-\Sc_3-2\Sc_{21}-\Sc_{111})
 +(\Sc_4+3\Sc_{31}+\Sc_{22}+3\Sc_{211}+\Sc_{1111})
+\ldots\\
\nonumber \tssm_1=&
\Sc_1+(-2\Sc_2-2\Sc_{11})+(3\Sc_3+5\Sc_{21}+3\Sc_{111})
 +(-4\Sc_4-9\Sc_{31}-3\Sc_{22}-9\Sc_{211}-4\Sc_{1111})
+\ldots\\
\nonumber \tssm_{11}=&
\Sc_{11}+(-2\Sc_{21}-3\Sc_{111})
 +(3\Sc_{31}+2\Sc_{22}+7\Sc_{211}+6\Sc_{1111})
+\ldots.
\end{align}
As claimed in (\ref{i2}), adding together the expressions in (\ref{i3}) reproduces (\ref{i1})---but the signs in (\ref{i2}) and (\ref{i3}) show pattern. Further examples
illustrating this two-step positivity phenomenon will appear in \cite{Balazs, job}.

\subsection{Conventions}
We will consider varieties over $\C$, and cohomology with coefficient group $\C$. We distinguish between a ``weakly decreasing sequence of non-negative intergers'' and a
``partition''. The first one has a fixed length, while a paritition does not change by adding a 0 at the end.

\subsection{Acknowledgements}
The first author was partially supported by NKFI 112703 and 112735 as well as ERC Advanced Grant LTDBud; the second author was partially supported by NSF DMS-1200685 and by the Simons Foundation grant 523882. We are
grateful to L. Mihalcea,  T. Ohmoto, A. Szenes, A. Varchenko, and A. Weber for useful discussions on the topics of the paper.

\section{Ohmoto's equivariant CSM/SSM classes}

\subsection{Equivariant CSM/SSM theory} \label{sec:introCSM}
Let the algebraic group $G$ act on the smooth algebraic variety $M$, and let $f:M\to \C$ be an invariant constructible function. In \cite{O1, O2} Ohmoto (see also \cite[Section 2]{W}) defines the
equivariant Chern-Schwartz-MacPherson (CSM) class $\csm(f)\in H_G^*(M)$. The class $\ssm(f)=\csm(f)/c(TM)$ is called the (equivariant) Segre-Schwartz-MacPherson (SSM) class.
In fact the SSM class may have non-zero components in infinitely many degrees, thus it lives in the completion of $H_G^*(M)$. In notation we will not indicate this completion.
If $X$ is an invariant constructible subset of $M$ and $\E_X$ is its indicator function, then one defines $\csm(X \subset M)=\csm(\E_X)$, $\ssm(X \subset M)=\ssm(\E_X)$. If
$M$ is clear from the context we write $\csm(X)$ for $\csm(X\subset M)$ and $\ssm(X)$ for $\ssm(X\subset M)$.



Here are some important properties we will use below; for proof see \cite[Section 3.4]{O2}.
\begin{enumerate}[(i)]
\item \label{ismooth}
For $i:X\subset M$, both smooth we have $\csm(X\subset M)=i_*(c(TX))$ and $\ssm(X\subset M)=i_*(c(TX))/c(TM)=i_*(c(-\nu))$ where $\nu$ is the normal bundle of $X \subset
M$.
\item \label{iadd}
For invariant constructible functions $f,g$ and scalars $k,l\in \C$ we have  $\csm(kf+lg)=k\csm(f)+l\csm(g)$, $\ssm(kf+lg)=k\ssm(f)+l\ssm(g)$.
\item \label{ielore}
If $\eta:Y\to M$ is an equivariant map between smooth manifolds then
\[
\eta_* ( c(TY) )= \sum_a a \csm(M_a),
\]
where $M_a=\{m\in M: \chi(\eta^{-1}(m))=a\}$.
\item \label{itrans}
For $Y$ and $M$ smooth let $X\subset M$ be an invariant subvariety with an invariant Whitney stratification. Assume $\eta: Y \to M$ is (equivariant and) transversal to the strata of $X$. Then $\ssm(\eta^{-1}(X))=\eta^*( \ssm(X) )$.
\end{enumerate}

The orbit stratification of an algeraic group action with finitely many orbits is a Whitney stratification, see e.g. the main result of \cite{K} and mathoverflow.net/questions/129218. Below we will apply (\ref{itrans}) to such situations without mentioning the existence of Whitney stratifications.

\smallskip

\begin{remark} \rm
Note that CSM classes behave nicely with respect to direct image (\ref{ielore}), while the closely related SSM classes behave nicely with respect to transversal pullback
(\ref{itrans}).
\end{remark}

\begin{remark} \rm
In other versions of CSM theory the class $\csm(X)$ is an element in the (Borel-Moore) homology of $X$. The cohomology version we use is then obtained by pushing forward the
homology CSM class to the homology of $M$ and applying (equivariant) Poincar\'e duality.
\end{remark}

\subsection{Equivariant CSM/SSM classes in representations}

Let us assume that the underlying space $M$ is a vector space, and rename it to $V$. Then the CSM and SSM classes of $X\subset V$ are in $H^*_G(V)=H^*(BG)$, hence they are
$G$-characteristic classes. The main importance of this special case is the following ``degeneracy locus'' interpretation of SSM classes, which is essentially a consequence
of (\ref{itrans}).

\begin{theorem} \cite[Theorem 3.12 (4)]{O2} \label{thm:ohmoto}
Let $G$ act on $V$ and $X\subset V$ be an invariant subvariety. Let $B$ be smooth and compact and let $E\to B$ be a vector bundle with fiber $V$ and structure group $G$. Let
$X(E)$ be the union of $X$'s in each fiber, and assume that the section $\sigma$ is transversal to $X(E)$. Then the {\em ordinary} (that is, {\em non-equivariant}) SSM-class
of $\sigma^{-1}(X(E))\subset B$ can be obtained as $\ssm(X)$ (as a $G$-charactersitic class) evaluated at the bundle $E\to B$.
\end{theorem}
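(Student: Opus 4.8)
The plan is to reduce to the universal situation over a classifying space and then to apply the transversal pullback property (\ref{itrans}) twice. Since $B$ is compact, $\dim B$ is finite, so we may fix a finite-dimensional approximation $EG_N\to BG_N$ of the universal $G$-bundle with $N$ large enough that $H^{\le k}(BG_N)\cong H^{\le k}_G(\mathrm{pt})$ throughout the range of degrees that can occur in $H^*(B)$. Put $\mathcal V=EG_N\times_G V$ with bundle projection $\pi\colon \mathcal V\to BG_N$ (so $\pi^*$ is an isomorphism in that range), and $X(\mathcal V)=EG_N\times_G X\subset\mathcal V$. By Ohmoto's construction the equivariant class $\ssm(X\subset V)\in H^*_G(V)=H^*(BG_N)$ is characterised by $\pi^*\,\ssm(X\subset V)=\ssm\bigl(X(\mathcal V)\subset\mathcal V\bigr)$, and ``evaluating $\ssm(X)$ at $E\to B$'' means applying $\phi^*$, where $\phi\colon B\to BG_N$ is a classifying map for $E$. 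Choosing such a $\phi$, one gets a Cartesian square whose top arrow $\Phi\colon E\to\mathcal V$ covers $\phi$ and whose two vertical arrows are the bundle projections; unwinding definitions, $X(E)=\Phi^{-1}\bigl(X(\mathcal V)\bigr)$.

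Now I would apply (\ref{itrans}) twice. The orbit stratification of $X$ is Whitney and induces, via $EG_N\times_G(-)$, a Whitney stratification of $X(\mathcal V)$; the map $\Phi$, being the top of a Cartesian square whose vertical arrows are submersions, is transversal to every submanifold of $\mathcal V$, in particular to these strata, so (\ref{itrans}) gives $\ssm\bigl(X(E)\subset E\bigr)=\Phi^*\,\ssm\bigl(X(\mathcal V)\subset\mathcal V\bigr)$. On the other hand, the hypothesis that $\sigma$ is transversal to $X(E)$ is, by the remark following (\ref{itrans}), transversality to all the (pulled-back) strata of $X(E)$; applying (\ref{itrans}) once more, $\ssm\bigl(\sigma^{-1}(X(E))\subset B\bigr)=\sigma^*\,\ssm\bigl(X(E)\subset E\bigr)=(\Phi\circ\sigma)^*\,\ssm\bigl(X(\mathcal V)\subset\mathcal V\bigr)$.

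It remains to identify the right-hand side with $\phi^*\,\ssm(X\subset V)$. Because $\sigma$ is a section of $E\to B$ and $\Phi$ covers $\phi$, the composite $\Phi\circ\sigma\colon B\to\mathcal V$ satisfies $\pi\circ\Phi\circ\sigma=\phi$; hence $(\Phi\circ\sigma)^*\circ\pi^*=\phi^*$, and since $\pi^*$ is an isomorphism in the relevant range, $(\Phi\circ\sigma)^*\,\ssm\bigl(X(\mathcal V)\subset\mathcal V\bigr)=(\Phi\circ\sigma)^*\pi^*\,\ssm(X\subset V)=\phi^*\,\ssm(X\subset V)$, which is exactly $\ssm(X)$ evaluated at $E\to B$. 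The points that need care --- as opposed to the routine diagram chasing above --- are the two transversality claims (that the orbit stratification is Whitney, already recorded in the excerpt, and that $\Phi$ is transverse to these strata, which is automatic for a pullback along a submersion) and, more substantively, the compatibility of Ohmoto's equivariant $\ssm$ with finite-dimensional approximations; this last point is where I expect the real content to lie, and it is part of the setup in \cite{O2}. Compactness of $B$ enters only to guarantee that a single finite $BG_N$ suffices.
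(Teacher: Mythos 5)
The paper does not actually prove this statement---it is imported verbatim from Ohmoto \cite[Theorem 3.12 (4)]{O2}, with only the remark that it is ``essentially a consequence of (\ref{itrans})''---and your argument is a correct working-out of exactly that remark: two applications of the transversal-pullback property, plus the definitional compatibility of the equivariant class with finite-dimensional approximations of $BG$ (which you rightly identify as the real content and which is part of Ohmoto's construction). One small caveat: $\Phi$ is \emph{not} transversal to every submanifold of $\mathcal{V}$ (a fiber inclusion $V\hookrightarrow\mathcal{V}$ is also a base change along a submersion, yet fails to be transverse to submanifolds contained in that fiber); what you actually need, and what is true, is that $\Phi$ is transverse to the strata of $X(\mathcal{V})$ because each stratum is saturated, i.e.\ the projection $\pi$ restricted to it is a submersion onto $BG_N$, so its tangent space together with the vertical directions hit by $d\Phi$ spans all of $T\mathcal{V}$.
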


Here are some other properties we will need below.
\begin{enumerate}[(i)]
\setcounter{enumi}{4}
\item \label{iPart} Let $X_0 \subset X \subset V$ be invariant subvarieties and let $X_0$ be smooth. Assume that there is an invariant complementary subspace $W\leq V$ to
    $T_0X_0$ transversal to $X$. Then $\csm(X)$ is divisible by $c(T_0X_0)$.
\item \label{iEuler}
 Let $X\subset V=\C^n$ be an invariant cone-subvariety (i.e. stable w.r.t. multiplication by $\lambda\in \C$).  We have
\[
\csm(X)=[X]+\ldots+e(V), \qquad \ssm(X)=[X]+\ldots,
\]
that is, the smallest degree part of both is the (equivariant) fundamental class. The class $\csm$ has finitely many non zero components, the highest degree one is the
Euler class $e(V)=\prod_{i=1}^n w_i$ for the weights $w_i$ of the representation. (We choose a maximal torus of $G$, and on weights we will always mean the weights of the
corresponding torus action.)
\item \label{iEuler1}
Let the representation on $V=\C^n$ have $k$ zero weights. That is, the zero weight subspace $V_0 \subset V$ has dimension $k$. Assume that $W$ is an invariant
complementary subspace to $V_0$ and is transversal to the invariant cone-subvariety $X$. Then
\[
\csm(X)=[X]+\ldots+\prod_{i=1}^{n-k} w_i,
\]
where $w_i$ are the non-zero weights of $V$. That is, the highest degree part of $\csm(X)$ has degree $n-k$ and it is the product of the non-zero weights.
\end{enumerate}
\smallskip

In particular, Property (\ref{iEuler}) claims that the $n$th component of $\csm(X)$ is independent of $X$, it only depends on the representation. The essence of
(\ref{iEuler1}) compared to (\ref{iEuler}) is that not only the $n$'th, but the $n-1$'st, $\ldots$, $n-k$'th components of $\csm(X)$ are also independent of $X$.

Property (\ref{iPart}) is a consequence of (\ref{itrans}) as is shown in \cite[Section 6]{RV}. The proof of (\ref{iEuler}) is \cite[Section 4.1]{O1}. Here we
prove~(\ref{iEuler1}). From (\ref{itrans}) we know that $\csm(X)/c(V)=\csm(X\cap W)/c(W)$. Since $c(V)=c(W)$, applying (\ref{iEuler}) to $X\cap W \subset W$ proves
(\ref{iEuler1}).

In concrete examples---e.g. the ones we will deal with in the paper---the existence of $W$ (in Properties (\ref{iPart}) and (\ref{iEuler1})) can easily be checked. In fact
passing to the maximal compact torus (which does not affect equivariant cohomology) it can be proved in very general situations.

\begin{example} \rm
Let the 2-torus act on $\C^3$ by $(a,b).(x,y,z)=(ax, b^2y,z)$. Let $\alpha, \beta$ be the first Chern classes of the  2-torus corresponding to $a,b$. Thus the weights of
this representation are $\alpha, 2\beta$, and 0. Let $X=\{x=0\}$, $Y=\{y=0\}$, $Z=\{z=0\}$. It is instructive to verify Property~(\ref{iEuler1}) in the examples
\[
\csm(X)=(1+2\beta)\alpha, \quad
\csm(Y)=(1+\alpha)2\beta, \quad
\csm(X \cap Y)=2\alpha\beta,\]
\[
\csm(X \cup Y)=(1+2\beta)\alpha+(1+\alpha)2\beta-2\alpha\beta=\alpha+2\beta+2\alpha\beta.
\]
The claims in the first line follow from (\ref{ismooth}) and the claim in the second line follows from (\ref{iadd}).
\end{example}

\subsection{Interpolation characterization}

Consider the linear representation $V$ of the algebraic group $G$. Assume it has finitely many orbits, and assume that the representation contains the scalars, that is,
orbits are invariant under multiplication by $\lambda\in \C^*$. For an orbit $\Omega$ let $G_\Omega$ be the stabilizer subgroup of a point $x_\Omega\in \Omega$, and let
          \[ \phi_\Omega:H^*(BG)\to H^*(BG_\Omega)\]
 be induced by the inclusion $G_\Omega\subset G$. Let $T_\Omega$ be the tangent space of $\Omega$ at $x_\Omega$, and let $N_\Omega=V/T_\Omega$ be the ``normal'' space. The
 group $G_\Omega$ acts of $T_\Omega$ and $N_\Omega$, hence these representations have an equivariant total Chern class ($c$) and an Euler class ($e$) in $H^*(BG_\Omega)$.

We say that the representation with finitely many orbits satisfies the {\em Euler condition} if $e(N_\Omega)$ is not a 0-divisor in $H^*(BG_\Omega)$ for all $\Omega$.  Let
us recall two topological lemmas.

\begin{lemma}\cite[Theorem 3.7]{FR1} \label{lem:euler}
Let $\Theta_1, \Theta_2, \ldots$ be the list of orbits satisfying $i<j \Rightarrow \Theta_i \not\subset \overline{\Theta}_j$ in a representation satisfying the Euler
condition. Suppose $\omega\in H^*_G(V)$ is 0 when restricted to $\Theta_1\cup\ldots\cup \Theta_s$ and it is 0 restricted to $\Theta_{s+1}$. Then $\omega$ is 0 restricted to
$\Theta_1 \cup \ldots \cup \Theta_{s+1}$.\qed
\end{lemma}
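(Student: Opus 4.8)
The plan is to reduce the statement to a purely local question about the cohomology of the classifying space of a single stabilizer, then invoke the Euler condition. First I would set up the standard Mayer--Vietoris / localization machinery for the filtration of $V$ by unions of orbits. Order the orbits as $\Theta_1,\Theta_2,\ldots$ so that $i<j\Rightarrow\Theta_i\not\subset\overline{\Theta_j}$; this guarantees that each initial union $U_s:=\Theta_1\cup\dots\cup\Theta_s$ is open in $U_{s+1}:=U_s\cup\Theta_{s+1}$ and that $\Theta_{s+1}$ is closed in $U_{s+1}$. I would work with equivariant cohomology of these $G$-invariant subsets, using the long exact sequence of the pair $(U_{s+1},U_s)$,
\[
\cdots\to H^*_G(U_{s+1},U_s)\xrightarrow{\ j^*\ } H^*_G(U_{s+1})\xrightarrow{\ r^*\ } H^*_G(U_s)\to\cdots,
\]
together with the Thom isomorphism $H^*_G(U_{s+1},U_s)\cong H^{*-2c}_{G}(\Theta_{s+1})$, where $c=\codim(\Theta_{s+1}\subset V)$ and the identification is via a $G$-invariant tubular neighborhood of $\Theta_{s+1}$ inside $U_{s+1}$. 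Under this isomorphism, the composite of $j^*$ with restriction to $\Theta_{s+1}$ is multiplication by the equivariant Euler class $e(\nu_{s+1})$ of the normal bundle of $\Theta_{s+1}$ in $V$ — and since $\Theta_{s+1}\cong G/G_{\Theta_{s+1}}$, one has $H^*_G(\Theta_{s+1})\cong H^*(BG_{\Theta_{s+1}})$ and the Euler class in question is exactly $e(N_{\Theta_{s+1}})\in H^*(BG_{\Theta_{s+1}})$ in the notation of the paper.

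Now the argument. Let $\omega\in H^*_G(V)$ restrict to $U_s$ and to $\Theta_{s+1}$; I want to conclude $\omega|_{U_{s+1}}=0$. Since $r^*(\omega|_{U_{s+1}})=\omega|_{U_s}=0$, exactness gives a class $\tau\in H^*_G(U_{s+1},U_s)$ with $j^*(\tau)=\omega|_{U_{s+1}}$. Apply restriction to $\Theta_{s+1}$: on one hand this is $\omega|_{\Theta_{s+1}}=0$; on the other hand, by the Thom-isomorphism computation above, it equals $e(N_{\Theta_{s+1}})\cdot\bar\tau$, where $\bar\tau\in H^*(BG_{\Theta_{s+1}})$ is the image of $\tau$ under the Thom isomorphism. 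Hence $e(N_{\Theta_{s+1}})\cdot\bar\tau=0$ in $H^*(BG_{\Theta_{s+1}})$. Because the representation satisfies the Euler condition, $e(N_{\Theta_{s+1}})$ is not a zero-divisor there, so $\bar\tau=0$, hence $\tau=0$, hence $\omega|_{U_{s+1}}=j^*(\tau)=0$. This is the full content of the lemma.

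The step I expect to be the main obstacle is making the identification "restriction to $\Theta_{s+1}$ of $j^*$ equals multiplication by the Euler class" fully rigorous in the equivariant, possibly non-compact, singular setting: one must know that $\Theta_{s+1}$ admits a $G$-invariant tubular neighborhood inside the (singular) space $U_{s+1}$, and that the resulting Thom class restricts to the Euler class of $N_{\Theta_{s+1}}$. In the algebraic-group-with-finitely-many-orbits situation this is standard — the local structure of an orbit in an invariant subvariety is a product of the orbit with a slice, and $G$-equivariantly the normal directions form a genuine vector bundle over $\Theta_{s+1}=G/G_{\Theta_{s+1}}$ whose fiber is $N_{\Theta_{s+1}}$ — but it requires the orbit stratification to be sufficiently nice (which is exactly the Whitney condition recalled earlier in the excerpt). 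Since the cited reference \cite{FR1} already establishes precisely this mechanism, I would phrase the proof as an application of that equivariant localization long exact sequence, spending the bulk of the write-up on the clean algebraic step (the Euler condition kills $\bar\tau$) and citing \cite{FR1} for the topological input.
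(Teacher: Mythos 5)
Your proof is correct and is exactly the mechanism behind the cited result \cite{FR1}; the paper itself offers no proof of Lemma~\ref{lem:euler} beyond that citation, and your route (the ordering hypothesis makes each $U_j=\Theta_1\cup\cdots\cup\Theta_j$ open in $V$, the long exact sequence of the pair plus the Thom isomorphism turns the restriction of $j^*$ into multiplication by $e(N_{\Theta_{s+1}})$, and the Euler condition kills $\bar\tau$) is the same one the paper uses for its Lemma~\ref{lem:gysin}. The only inaccuracy is your closing worry: $U_{s+1}$ is itself open in $V$ by the same ordering argument, hence a smooth $G$-manifold containing $\Theta_{s+1}$ as a smooth closed submanifold, so the equivariant tubular neighborhood is unproblematic and no appeal to Whitney conditions is needed here.
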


\begin{lemma} \label{lem:gysin}
Let $W$ be an invariant subspace of the $G$-representation $V$, and let $e\in H_G^*(W)=H^*(BG)$ be the equivariant Euler class of the normal bundle of $W \subset V$. If a
class $\omega\in H^n_G(V)=H^*(BG)$ is supported on $W$ (that is, it is 0 restricted to $V-W$), then it is divisible by $e$.
\end{lemma}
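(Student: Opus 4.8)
The plan is to deduce the statement from the Gysin long exact sequence of the pair $(V,V-W)$ in $G$-equivariant cohomology. First I would record the reductions that make the groups involved transparent: since $V$ and $W$ are linear $G$-spaces they are $G$-equivariantly contractible (scale towards the origin), so the inclusion $W\hookrightarrow V$ induces an isomorphism $H^*_G(V)\xrightarrow{\sim} H^*_G(W)$, with both canonically identified with $H^*(BG)$ — this is exactly the identification implicit in the statement.

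Next I would invoke the long exact sequence of the pair (formed after the Borel construction $(-)\times_G EG$),
\[
\cdots\to H^n_G(V,V-W)\xrightarrow{\,j^*\,} H^n_G(V)\xrightarrow{\,r^*\,} H^n_G(V-W)\to\cdots .
\]
The hypothesis that $\omega$ is supported on $W$ is precisely $r^*\omega=0$, so $\omega=j^*\widetilde\omega$ for some relative class $\widetilde\omega\in H^n_G(V,V-W)$.

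The crucial input is the equivariant Thom isomorphism. The normal bundle of $W$ in $V$ is the trivial bundle $W\times(V/W)\to W$ with the diagonal $G$-action; since $V/W$ is a complex $G$-representation, this is a complex $G$-vector bundle $\nu$ of rank $d=\dim_{\C}(V/W)$, hence its Borel construction is canonically oriented and the Thom isomorphism gives $H^n_G(V,V-W)\cong H^{n-2d}_G(W)=H^{n-2d}(BG)$. It is then standard that the composite
\[
H^{n-2d}_G(W)\xrightarrow{\ \sim\ } H^n_G(V,V-W)\xrightarrow{\ j^*\ } H^n_G(V)\cong H^n_G(W)
\]
is cup product with the equivariant Euler class $e=e(\nu)$. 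Writing $\alpha\in H^{n-2d}(BG)$ for the preimage of $\widetilde\omega$ under the Thom isomorphism, this yields $\omega=j^*\widetilde\omega=e\cdot\alpha$, which is the claim.

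I do not expect a genuine obstacle here; the argument is essentially formal. The two points that need (routine) care are: that $V-W$ is $G$-equivariantly homotopy equivalent to the complement of the zero section of $\nu$, which is immediate because everything in sight is linear and no tubular neighbourhood theorem is needed; and the standard identification of $j^*$ with multiplication by the Euler class under the Thom isomorphism. If one prefers, the same computation can be phrased directly via the Gysin sequence of the unit sphere bundle $S(\nu)$ — after choosing a $G$-invariant Hermitian form on $V/W$, e.g. by restricting to a maximal compact subgroup of $G$ as is done elsewhere in the paper — in which the kernel of $r^*\colon H^n_G(W)\to H^n_G(S(\nu))$ is exactly the image of $\cup\, e$.
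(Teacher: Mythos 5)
Your argument is correct and is essentially the paper's own proof: the paper simply invokes the exactness of the Gysin sequence $H_G^{n-\codim(W\subset V)}(W)\to H^n_G(W)\to H^n_G(V-W)$, whose first map is multiplication by $e$, and your derivation via the long exact sequence of the pair $(V,V-W)$ plus the equivariant Thom isomorphism is exactly how that Gysin sequence is constructed. You have merely unpacked the standard ingredients that the paper leaves implicit.
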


\begin{proof} The statement follows from the exactness of the Gysin sequence
\[ H_G^{n- \codim(W\subset V)}(W) \to H^n_G(W) \to H_G^n(V-W)\]
where the first map is multiplication by $e$, and the second map is the composition
$H^n_G(W)=H_G^n(\text{pt})=H^n_G(V) \xrightarrow{r} H_G^n(V-W)$ with $r$ being the restriction map.
\end{proof}

For a cohomology class $x=x_0+x_1+x_2+ \ldots \in H^*(X)$, $x_i\in H^{2i}(X)$, let $\deg(x)$ be the largest $i$ for which $x_i\not=0$. We set $\deg(0)=-\infty$.

\begin{theorem} \label{thm:intchar}
Let the $G$ representation $V$ contain the scalars, let it have finitely many orbits, and let it satisfy the Euler condition. The properties
\begin{enumerate}[(I)]
\item \label{iprinc}
$\phi_\Omega(\csm(\Omega))=c(T_\Omega)e(N_\Omega) \in H^*(BG_\Omega)$,
\item \label{idivis}
$\phi_\Theta(\csm(\Omega))$ is divisible by $c(T_\Theta)$ in $H^*(BG_\Theta)$,
\item \label{idegree}
if $\Theta\not= \Omega$ then $\deg(\phi_\Theta(\csm(\Omega)))< \deg( c(T_\Theta)e(N_\Theta))$
\end{enumerate}
uniquely determine $\csm(\Omega)$.
\end{theorem}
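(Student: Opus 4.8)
The plan is to show that properties (I)--(III) (i) are satisfied by $\csm(\Omega)$, and (ii) admit a unique solution. For existence, (I) follows from Property (\ref{ismooth}): restricting $\csm(\Omega)$ to the orbit $\Omega$ itself is pulling back $i_*(c(T\Omega))$, and the self-intersection formula on the orbit $\Omega$ inside $V$ contributes the Euler class of the normal bundle, giving $c(T_\Omega)e(N_\Omega)$. Property (II) is exactly the statement (\ref{iPart}) applied locally at $x_\Theta$: near $x_\Theta$ the orbit $\Theta$ is smooth and one finds a $G_\Theta$-invariant complement $W$ to $T_\Theta$ transversal to $\overline\Omega$, so $\csm(\overline\Omega)$ — and hence, by additivity (\ref{iadd}) over the finitely many orbits in the closure, $\csm(\Omega)$ — restricts to something divisible by $c(T_\Theta)$. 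Property (III) is a degree count: the component of $\csm(\Omega)$ in $H^*_G(V)$ restricted to $x_\Theta$ lands in degrees at most $\dim V = \dim T_\Theta + \dim N_\Theta$, but since $\Theta \ne \Omega$ the codimension-$\dim N_\Theta$ (top) component along $N_\Theta$ cannot be saturated: using (\ref{iEuler1}) together with the fact that $\Omega$ is a cone not containing $x_\Theta$ in its smooth locus, the restriction misses the Euler class of $N_\Theta$ in top degree, so $\deg(\phi_\Theta(\csm(\Omega))) < \dim T_\Theta + \dim N_\Theta = \deg(c(T_\Theta)e(N_\Theta))$.

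For uniqueness, suppose $\omega \in H^*_G(V)$ satisfies the homogeneous versions of the three conditions relative to $\Omega$: $\phi_\Omega(\omega) = 0$ — wait, more carefully, suppose $\omega_1, \omega_2$ both satisfy (I)--(III) and set $\delta = \omega_1 - \omega_2$. Then $\phi_\Omega(\delta) = 0$; $\phi_\Theta(\delta)$ is divisible by $c(T_\Theta)$ for all $\Theta$; and $\deg(\phi_\Theta(\delta)) < \deg(c(T_\Theta)e(N_\Theta))$ for $\Theta \ne \Omega$ (and also for $\Theta = \Omega$ since $\phi_\Omega(\delta)=0$ has degree $-\infty$). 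I want to conclude $\delta = 0$. Order the orbits $\Theta_1, \Theta_2, \ldots$ compatibly with the closure order as in Lemma~\ref{lem:euler} ($i<j \Rightarrow \Theta_i \not\subset \overline{\Theta}_j$), and induct on $s$ to show $\delta$ restricts to $0$ on $\Theta_1 \cup \ldots \cup \Theta_s$. The inductive step uses Lemma~\ref{lem:euler}: it suffices to show $\phi_{\Theta_{s+1}}(\delta) = 0$ knowing $\delta|_{\Theta_1\cup\ldots\cup\Theta_s} = 0$. The key point is that, because $\delta$ vanishes on the earlier orbits, the class $\delta$ on the open set $\Theta_{s+1} \cup (\text{later orbits})$ — or rather, working in an appropriate slice at $x_{\Theta_{s+1}}$ — is supported on the complement of $\Theta_{s+1}$ within that slice, i.e. on the normal slice directions. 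Lemma~\ref{lem:gysin} then forces $\phi_{\Theta_{s+1}}(\delta)$ to be divisible by $e(N_{\Theta_{s+1}})$. Combined with divisibility by $c(T_{\Theta_{s+1}})$ from condition (II), and the Euler condition (which says $e(N_{\Theta_{s+1}})$ is not a zero-divisor, and one checks $c(T_{\Theta_{s+1}})$ and $e(N_{\Theta_{s+1}})$ are coprime since $c(T_{\Theta_{s+1}})$ has invertible constant term), we get that $\phi_{\Theta_{s+1}}(\delta)$ is divisible by $c(T_{\Theta_{s+1}})e(N_{\Theta_{s+1}})$. But condition (III) says $\deg(\phi_{\Theta_{s+1}}(\delta)) < \deg(c(T_{\Theta_{s+1}})e(N_{\Theta_{s+1}}))$, and a nonzero multiple of a class cannot have strictly smaller degree (degrees are non-negative, the Euler class part has positive top degree $\dim N_{\Theta_{s+1}}$). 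Hence $\phi_{\Theta_{s+1}}(\delta) = 0$, completing the induction; since the orbits cover $V$, $\delta = 0$.

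The main obstacle I anticipate is making the ``supported on the complement of $\Theta_{s+1}$'' step rigorous enough to invoke Lemma~\ref{lem:gysin}: the lemma is stated for an invariant subspace $W$ of a representation $V$, so one must pass to a $G_{\Theta_{s+1}}$-invariant slice (normal slice) $S$ at $x_{\Theta_{s+1}}$, note that $\Theta_{s+1} \cap S$ is a single point (the origin of the slice) with the later orbits filling in the rest, and check that the inductive vanishing on $\Theta_1, \ldots, \Theta_s$ together with the slice structure means the relevant restriction of $\delta$ to $S$ is supported at that origin — at which point $W = \{0\} \subset S$ and the normal Euler class is $e(N_{\Theta_{s+1}})$. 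One also has to be careful that $G_{\Theta_{s+1}}$ need not be connected or a torus, so ``Euler class'' and ``total Chern class'' should be read as characteristic classes in $H^*(BG_{\Theta_{s+1}})$; the coprimality of $c(T_\Theta)$ and $e(N_\Theta)$ in that possibly-complicated ring is what the Euler condition is designed to handle, but one should verify that "not a zero-divisor" plus "unit constant term factor" genuinely yields the divisibility-by-the-product conclusion. A secondary technical point is the degree bookkeeping in condition (III) for $\csm(\Omega)$ itself (existence), which rests on (\ref{iEuler1}) and the cone structure; this should be routine but needs the invariant complement $W$ to the zero-weight space of $N_\Theta$ to exist, guaranteed in the concrete situations the paper treats.
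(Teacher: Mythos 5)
Your proofs of (I), (II), and of uniqueness follow the paper's argument: (I) via the smoothness property (\ref{ismooth}), (II) via the decomposition $\E_\Omega=\sum_{\Phi\le\Omega}d_{\Omega,\Phi}\E_{\overline\Phi}$ together with property (\ref{iPart}) applied to each closure, and uniqueness by the same induction over an ordering of the orbits, using Lemma \ref{lem:gysin} for divisibility by $e(N_{\Theta})$, condition (II) for divisibility by $c(T_{\Theta})$, coprimality, the degree bound, and Lemma \ref{lem:euler}. (The ``supported on a non-linear set'' issue you flag when invoking Lemma \ref{lem:gysin} is present in the paper's own proof as well, which applies the lemma just as tersely.)

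The genuine gap is in your existence argument for Property (III). You assert $\deg(c(T_\Theta)e(N_\Theta))=\dim T_\Theta+\dim N_\Theta=\dim V$ and then argue only that the top-degree (i.e.\ degree $\dim V$) component of $\phi_\Theta(\csm(\Omega))$ vanishes. But $\deg(c(T_\Theta)e(N_\Theta))=\dim V-k$, where $k$ is the number of zero weights of $G_\Theta$ on $T_{M_\Theta}V$; these zero weights lie in $T_\Theta$ (each contributes a factor $1+0$ to $c(T_\Theta)$), and $k>0$ for every orbit except possibly the open one --- for matrix Schubert cells they are exactly the $\A_0$ directions. So (III) demands the vanishing of the components of $\phi_\Theta(\csm(\Omega))$ in \emph{all} degrees from $\dim V-k$ up to $\dim V$, and killing only the top one is not enough. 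The paper's mechanism, which your sketch does not reproduce, is: by (\ref{iEuler1}) the components of $\phi_\Theta(\csm(\overline\Phi))$ in degrees $i\ge\dim V-k$ are independent of $\Phi$ (call them $x_i$), so the degree-$i$ component of $\phi_\Theta(\csm(\Omega))=\sum_\Phi d_{\Omega,\Phi}\,\phi_\Theta(\csm(\overline\Phi))$ equals $x_i\sum_{\Theta\le\Phi\le\Omega}d_{\Omega,\Phi}$, and this coefficient sum is $0$ because evaluating $\E_\Omega=\sum d_{\Omega,\Phi}\E_{\overline\Phi}$ at the point $M_\Theta\notin\Omega$ gives $0$. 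Note also that (\ref{iEuler}) and (\ref{iEuler1}) are statements about closed cone-subvarieties asserting that the top components are \emph{independent of the variety}, not that they vanish; they cannot be applied directly to the locally closed orbit $\Omega$ to conclude that the restriction ``misses the Euler class'' --- the vanishing comes only from the signed sum over closures.
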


\begin{proof}
First we prove that $\csm(\Omega)$ satisfies the properties.

The orbit $\Omega$ is smooth at $x_\Omega$, hence the image of $\csm(\Omega)$ at the homomorphism
$H^*_G(V) \to H^*_{G_\Omega}(V) = H^*_{G_\Omega}(x_\Omega)$
is $c(T_\Omega)e(N_\Omega)$, see (\ref{ismooth}). The named homomorphism can be identified with $\phi_\Omega$, hence Property (\ref{iprinc}) is proved.

Let
\begin{equation}\label{eqn:function}
\E_\Omega=\sum_{\Phi\leq \Omega} d_{\Omega,\Phi} \E_{\overline{\Phi}},
\end{equation}
where $\Phi\leq \Omega$ means that $\Phi\subset \overline{\Omega}$. Then
$\csm(\Omega)=\sum_{\Phi\leq\Omega} d_{\Omega,\Phi} \csm(\overline{\Phi})$ and
\[
\phi_\Theta(\csm(\Omega))=\sum_{\Theta\leq\Phi\leq\Omega} d_{\Omega,\Phi} \phi_\Theta(\csm(\overline{\Phi})).
 \]
Each of the $\phi_\Theta(\csm(\overline{\Phi}))$ restrictions are divisible by $c(T_\Theta)$, because of (\ref{iPart}). This proves Property~(\ref{idivis}).

Observe that the number of 0 weights of $G_\Theta$ acting on the tangent space of $V$ at $M_\Theta$ is $n-\deg( c(T_\Theta)e(N_\Theta))$. Hence, for any $i\geq \deg(
c(T_\Theta)e(N_\Theta))$ the $i$'th component of $\phi_\Theta(\csm(\overline{\Phi}))$ does not depend on $\Phi$, let the common value be called $x_i$. Then for $i\geq \deg(
c(T_\Theta)e(N_\Theta))$ we have that the $i$'th component of $\phi_\Theta(\csm(\Omega))$ is
\begin{equation}\label{eqn:xd}
x_i\cdot \sum_{\Theta\leq\Phi\leq\Omega} d_{\Omega,\Phi}.
\end{equation}
However, substituting $M_\Theta$ in the identity (\ref{eqn:function}) we get $0=\sum_{\Theta\leq\Phi\leq\Omega} d_{\Omega,\Phi}$. Hence expression (\ref{eqn:xd}) is 0 for
all $i\geq \deg( c(T_\Theta)e(N_\Theta))$, which proves Property (\ref{idegree}).

The proof of the uniqueness of classes satisfying (\ref{iprinc})--(\ref{idegree}) is an adaptation of the argument in \cite[Section 3.3]{MO}. Suppose two classes satisfy the
conditions above for $\csm(\Omega)$, and let $\omega$ be their difference. Then for {\em every} $\Theta$ we have that $\phi_\Theta(\omega)$ is divisible by $c(T_\Theta)$ and
has degree strictly less than $\deg ( c(T_\Theta)e(N_\Theta))$. Let $\Theta_1, \Theta_2, \ldots$ be a (finite) list of orbits satisfying $i<j \Rightarrow \Theta_i
\not\subset \overline{\Theta}_j$. We will prove by induction on $s$ that $\omega$ is 0 when restricted to $\Theta_1\cup \ldots \cup \Theta_s$. For $s=0$ the claim holds.
Suppose we know this statement for $s-1$ and want to prove it for $s$. Because of the induction hypotheses, $\omega$ is supported on $\Theta_{s} \cup \Theta_{s+1} \cup
\ldots$. Hence its $\Theta_s$ restriction must be divisible by $e(N_{\Theta_s})$ (Lemma \ref{lem:gysin}). We also know that it is divisible by $c(T_{\Theta_s})$. These
classes are coprime in $H^*(BG_{\Theta_s})$, therefore we have that $\phi_{\Theta_s} (\omega)$ is divisible by $c(T_{\Theta_s})e(N_{\Theta_s})$. Since its degree is strictly
less than that of $c(T_{\Theta_s})e(N_{\Theta_s})$, we have that $\phi_{\Theta_s}(\omega)=0$. Lemma \ref{lem:euler} implies that $\omega$ restricted to $\Theta_1\cup
\Theta_2\cup\ldots\cup\Theta_{s}$ is also zero.
\end{proof}

\begin{remark}
The property
\begin{enumerate}[(I)]
\setcounter{enumi}{3}
\item \label{ihom}
$\phi_\Theta(\csm(\Omega))=0$ for $\Theta\not\subset \overline{\Omega}$
\end{enumerate}
obviously holds too; it is not listed among the axioms above, because it is forced by them.
\end{remark}

\section{Matrix Schubert cells}\label{sec:SchubertCells} \label{sec:mxsc}


One of our goals in this paper is to give formulas for the CSM/SSM classes of the orbits of a certain representation. These orbits will be called the matrix Schubert cells.

Let us fix nonnegative integers $k\leq n$. Consider the group $GL_k(\C) \times B^-_n$ acting on $\Hom(\C^k,\C^n)$ by $(A,B).M=BMA^{-1}$. Here $B^-_n$ is the Borel subgroup
of $n\times n$ lower triangular matrixes.
The finitely many orbits of this action are parameterized by $d$-element subsets $J=\{j_1 <\ldots <j_d\} \subset \{1,\ldots,n\}$ with $0\leq d\leq k$.
The corresponding orbit is
\begin{equation}\label{eqn:rank}
\Omega_J=\{ M\text{ is an $n\times k$ matrix}: \text{rk(top $r$ rows of $M$})=| J \cap \{1,\ldots,r\}|\}
\end{equation}
A representative of the orbit $\Omega_J$ is the $n \times k$ matrix $M_J$ whose entries are 0's, except the $(j_u,u)$ entries are 1 ($u=1,\ldots,d$). The orbits will be
called matrix Schubert cells, and their closures are usually called matrix Schubert varieties, see e.g.~\cite{ss,km}.

For $J=\{j_1 <\ldots < j_d\} \subset \{1,\ldots,n\}$ we define a few subsets of the entries of $k\times n$ matrices that will be useful later. Let
\begin{align*}
\A_0&=\{ (v,u)\in \{1,\ldots,n\}\times \{1,\ldots,k\} : u\leq d, v=j_u \},
\\
\A_1&=\{ (v,u)\in \{1,\ldots,n\}\times \{1,\ldots,k\} : u\leq d, v < j_u \},
\\
\A_2&=\{ (v,u)\in \{1,\ldots,n\}\times \{1,\ldots,k\} : u\leq d, v > j_u \},
\\
\A_3&=\{ (v,u)\in \{1,\ldots,n\}\times \{1,\ldots,k\} : u > d \},
\\
\A_4&=\{ (v,u)\in \{1,\ldots,n\}\times \{1,\ldots,k\} : \exists w\leq d\ \ v=j_w, u>w \}.
\end{align*}
The set $\T_J=\A_0 \cup \A_2 \cup \A_4$ represents the directions in $\Hom(\C^k,\C^n)$ that are in the tangent space of $\Omega_J$ at $M_J$, and $\N_J=\{1,\ldots,n\}\times
\{1,\ldots,k\}- \A_0 \cup \A_2 \cup \A_4$ represents the directions normal to $\Omega_J$ at $M_J$. Hence $\dim \Omega_J=|\T_J|$, $\codim (\Omega_J \subset
\Hom(\C^k,\C^n))=|\N_J|$.

\begin{example} \rm \label{ex:23}
For $k=2$, $n=3$ there are 7 orbits, corresponding with the subsets $\{1,2\}$, $\{1,3\}$, $\{2,3\}$, $\{1\}$, $\{2\}$, $\{3\}$, $\{\}$ with representatives ($\bullet$ or $.$
stands for 0)
\[
\begin{pmatrix}
1 & \bullet \\
\bullet & 1 \\
\bullet & \bullet
\end{pmatrix}
\begin{pmatrix}
1 & \bullet \\
\bullet & .  \\
\bullet & 1
\end{pmatrix}
\begin{pmatrix}
. & . \\
1 & \bullet  \\
\bullet & 1
\end{pmatrix}
\begin{pmatrix}
1 & \bullet \\
\bullet & . \\
\bullet & .
\end{pmatrix}
\begin{pmatrix}
. &  .\\
1 & \bullet \\
\bullet & .
\end{pmatrix}
\begin{pmatrix}
. & . \\
 .&  .\\
1 & \bullet
\end{pmatrix}
\begin{pmatrix}
. & . \\
. & . \\
. & .
\end{pmatrix}.
\]
In each matrix a $\bullet$ or a 1 indicate boxes corresponding to directions tangent to $\Omega_J$ and the rest (indicated by .) correspond to normal directions.
\end{example}


Let $\alpha_1,\ldots,\alpha_k$ and $\beta_1,\ldots,\beta_n$ be the Chern roots of the group $GL_k(\C)\times B^-_n$. We have
\begin{equation}\label{eqn:thering}
H^*(B(GL_k(\C)\times B^-_n ))=\C[\alpha_1,\ldots,\alpha_k,\beta_1,\ldots,\beta_n]^{S_k},
\end{equation}
and the weights of the representation $\Hom(\C^k,\C^n)$ defined above are $\beta_v-\alpha_u$ for $v=1,\ldots,n$, $u=1,\ldots,k$. The weight space of $\beta_v-\alpha_u$ is
the line corresponding to the $(v,u)$ entry of $\Hom(\C^k,\C^n)$.

The CSM and SSM classes of matrix Schubert cells $\Omega_J$ are hence elements of the ring (\ref{eqn:thering}) and its completion, respectively. To claim the result about
these classes in Section \ref{sec:csmW} we first need to define some important functions in Section \ref{sec:weight}.

\section{Weight functions} \label{sec:weight}
In this section we define some important polynomials that will be identified with CSM classes of matrix Schubert cells in Section \ref{sec:csmW}.

\subsection{Localization form of weight functions}
Let $k\leq n$ and $I\subset \{1,\ldots,k\}$ where $|I|=d\leq k$  and $I=\{i_1<\ldots<i_d\}$.

\begin{definition}
Let $\aalpha=(\alpha_1,\ldots,\alpha_k)$ and $\bbeta=(\beta_1,\ldots,\beta_n)$ and
\[
U_I(\aalpha,\bbeta)=
\prod_{u=1}^d\prod_{v=i_u+1}^n (1+\beta_v-\alpha_u)
\prod_{u=d+1}^k \prod_{v=1}^n (\beta_v-\alpha_u)
\prod_{u=1}^d\prod_{v=1}^{i_u-1} (\beta_v-\alpha_u)
\prod_{u=1}^d \prod_{v=u+1}^k \frac{1+\alpha_u-\alpha_v}{\alpha_u-\alpha_v}.
\]
A permutation $\sigma\in S_k$ acts on a $k$-tuple by permuting the components. Define the ``weight function''
\[
W_I=W_I(\aalpha;\bbeta)=\frac{1}{(k-d)!}\sum_{\sigma\in S_k}  U_I(\sigma(\aalpha);\bbeta).
\]
\end{definition}

Although we omitted from the notation, the function $W_I$ depends on $k$ and $n$ as well; their stabilization properties will be discussed below. Despite their appearance
the weight functions are polynomials with integer coefficients, in fact of degree $kn-d$.

\begin{remark} \rm
Weight functions were used in \cite{TV} to describe q-hypergeometric solutions of the quantum Knizhnik-Zamolodchikov
equations. They also appeared in joint works \cite{GRTV, RTV, RV} with the second author, as key components in identifying cohomology rings with Bethe algebras. In these
past works weight functions were only defined for $d=k$, the present $d<k$ extension is new.
\end{remark}

\begin{remark}
In \cite{RV} the $d=k$ weight functions are divided by a particular factor. It is shown there that these rational functions are (in a suitable sense) representatives of CSM
classes in some quotient rings that are naturally identified with cohomology rings of {\em compact} spaces. See more on this in Appendix B, Chapter \ref{appB}.
\end{remark}

\begin{example}
For $k=1, n=2$ we have
\[
W_{\{1\}}=1+\beta_2-\alpha_1, \quad
W_{\{2\}}=\beta_1-\alpha_1, \quad
W_{\{\}}=(\beta_1-\alpha_1)(\beta_2-\alpha_1).
\]
For $k=2, n=2$ we have
\[
W_{\{1,2\}}=
\frac{(1+\beta_2-\alpha_1)(\beta_1-\alpha_2)(1+\alpha_1-\alpha_2)}{\alpha_1-\alpha_2}+
\frac{(1+\beta_2-\alpha_2)(\beta_1-\alpha_1)(1+\alpha_2-\alpha_1)}{\alpha_2-\alpha_1} =\]
\[
1+\beta_1+\beta_2+2\beta_1\beta_2-(\alpha_1+\alpha_2)(\beta_1+\beta_2)-\alpha_1-\alpha_2+2\alpha_1\alpha_2.
\]
\end{example}

\subsection{Residue form of weight functions}
The $\beta_i=0$ ($i=1,\ldots,n$) substitution $W_I(\aalpha;0,\ldots,0)$ of the weight function $W_I$ will be denoted by $W_{I,\beta=0}$.

We will use residue formulas for various functions. Namely, for $\zz=z_1,\ldots,z_\mu$ let $\RES_\mu(f(\zz))$ be a short hand notation for
$\Res{z_\mu=\infty}\ldots \Res{z_2=\infty} \Res{z_1=\infty} (f(\zz))$.

\begin{theorem} \label{thm:Wres}
For $d\leq k \leq n$, $|I|=d$ let $r=k-d$ and
\[
f_I=
\prod_{a=1}^r z_a^{n+r-a}
\prod_{a=r+1}^k z_a^{i_{k+1-a}-1}
\prod_{a=r+1}^k (1+z_a)^{n-i_{k+1-a}}
\prod_{a=r+1}^k \prod_{b=1}^{a-1} (1+z_b-z_a)
\prod_{1\leq b<a\leq k} (z_a-z_b).
\]
We have
\[
W_{I,\beta=0}=
(-1)^k
\RES_k
\left(
\frac{f_I}{\prod_{u=1}^k\prod_{v=1}^k (z_u+\alpha_v)} dz_1\ldots dz_k
\right)
\]
\end{theorem}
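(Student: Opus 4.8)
The plan is to show that the claimed iterated residue, taken at $z_1 = \cdots = z_k = \infty$, of the rational function
$f_I / \prod_{u,v}(z_u + \alpha_v)$
reproduces the symmetrized sum $W_{I,\beta=0} = \frac{1}{(k-d)!}\sum_{\sigma \in S_k} U_I(\sigma(\aalpha);0,\ldots,0)$ term by term (up to the global sign $(-1)^k$). The standard mechanism for such identities is: an iterated residue at infinity in the variables $z_1, \ldots, z_k$ of a rational function whose only finite poles are the linear factors $z_u + \alpha_v$ can be computed by the residue theorem, converting each $\Res{z_a = \infty}$ into a sum of finite residues at $z_a = -\alpha_{v}$; carefully organizing which denominator factor is ``hit'' at each stage produces exactly a sum over injections (here, over all of $S_k$ since $\prod_u \prod_v$ is the full $k \times k$ product), and evaluating the residue at $z_a = -\alpha_{\sigma(a)}$ turns $f_I$ into $U_I$ with $\aalpha$ permuted by $\sigma$.

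First I would set up the residue-at-infinity calculus: for a one-variable rational function $g(z)$ with $\deg g \le -2$ at infinity one has $\Res{z=\infty} g = -\sum_{p} \Res{z=p} g$ over the finite poles $p$, and more generally $\Res{z=\infty}$ extracts the coefficient of $z^{-1}$. The key bookkeeping point is the degree count: I would verify that, after the first $a-1$ residues have been taken, the integrand as a function of $z_a$ has the form (polynomial of controlled degree) over $\prod_v (z_a + \alpha_v)$ times factors not involving $z_a$, so that the residue at infinity picks up precisely the finite poles $z_a = -\alpha_v$ for the not-yet-used indices $v$ — this is where the ordering of the variable exponents $n+r-a$ (for $a \le r$) and $i_{k+1-a}-1$, $n-i_{k+1-a}$ (for $a > r$) in $f_I$ is engineered so that the top-degree/residue-at-infinity contributions vanish except for the single surviving term. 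I expect the Vandermonde factor $\prod_{b<a}(z_a - z_b)$ and the triangular factors $\prod_{b<a}(1+z_b-z_a)$ to play the role of forcing distinctness of the chosen indices and of reproducing the $\prod_{u<v}\frac{1+\alpha_u - \alpha_v}{\alpha_u - \alpha_v}$ correction factor in $U_I$ after specialization.

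Concretely, the steps in order: (1) State the residue-at-infinity lemma and the degree bookkeeping that reduces $\RES_k$ to a sum over bijections $a \mapsto v(a)$ of $\{1,\ldots,k\}$, i.e. over $\sigma \in S_k$, with the residue at $z_a = -\alpha_{\sigma(a)}$; (2) substitute $z_a = -\alpha_{\sigma(a)}$ into $f_I$ and into the surviving part of the denominator, and check that the blocks of $f_I$ match, respectively, the $\beta=0$ specializations of the four products in $U_I$: $\prod_{a \le r} z_a^{n+r-a}$ against $\prod_{u=d+1}^k \prod_v (-\alpha_u)$ (the ``$\beta_v - \alpha_u$'' block with $\beta = 0$), $\prod_{a>r} z_a^{i-1}(1+z_a)^{n-i}$ against the two blocks $\prod_{v=1}^{i_u-1}(-\alpha_u)$ and $\prod_{v=i_u+1}^n(1+\beta_v - \alpha_u)|_{\beta=0}$, and the $\prod(1+z_b - z_a)\prod(z_a - z_b)$ pair against $\prod_{u<v}\frac{1+\alpha_u-\alpha_v}{\alpha_u-\alpha_v}$ together with the Jacobian factors coming from the finite residues $\tfrac{1}{\prod_{v'\neq \sigma(a)}(-\alpha_{\sigma(a)}+\alpha_{v'})}$; (3) track all signs — the $(-1)^k$ out front, the $k$ factors of $-1$ from residue-at-infinity, and the signs from $z_a = -\alpha_{\sigma(a)}$ — and confirm they cancel to give a plain sum $\sum_\sigma U_I(\sigma(\aalpha);0)$; (4) finally observe that terms where the first $r$ chosen indices are permuted among themselves contribute identically (since $z_1, \ldots, z_r$ enter $f_I$ only through the symmetric-after-relabeling block $\prod z_a^{n+r-a}$ and the Vandermonde), producing the overcounting factor $(k-d)! = r!$ that matches the $\frac{1}{(k-d)!}$ in the definition of $W_I$.

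The main obstacle I anticipate is step (2)–(3): matching the Jacobian factors from the iterated finite residues against the rational correction factor $\prod_{u<v}\frac{1+\alpha_u - \alpha_v}{\alpha_u - \alpha_v}$ and the Vandermonde, while keeping the signs and the dependence on $\sigma$ exactly right — in particular checking that the ``$1+z_b - z_a$'' triangular factors, after substitution and combined with the denominator Jacobians, assemble into the symmetric-looking $\prod(1+\alpha_u - \alpha_v)/(\alpha_u-\alpha_v)$ rather than something $\sigma$-dependent. A secondary subtlety is making the degree/vanishing argument in step (1) fully rigorous: one must confirm that at each stage $a$ the residue at infinity in $z_a$ genuinely kills the ``diagonal'' contributions (where $z_a$ would pair with an already-used $\alpha$), which relies on the precise exponents in $f_I$; I would handle this by an explicit degree count showing the relevant numerator-over-denominator has degree $\le -2$ in $z_a$ except on the intended poles.
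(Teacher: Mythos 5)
Your proposal follows essentially the same route as the paper: both convert the iterated residue at infinity, via the Residue Theorem, into a sum of finite residues at $z_a=-\alpha_{\sigma(a)}$ over $\sigma\in S_k$, match the resulting evaluations of $f_I$ against the four blocks of $U_{I,\beta=0}$, and account for the $1/(k-d)!$ normalization by the redundancy among the first $r$ slots. One caveat on your step (4): the $r!$ residues obtained by permuting the first $r$ indices are \emph{not} literally identical, since $\prod_{a\le r} z_a^{n+r-a}$ has distinct exponents and the Vandermonde is antisymmetric; what is true is that their \emph{sum} equals $U_{I,\beta=0}(\sigma(\aalpha);0)$, which requires the standard symmetrization identity $\sum_{\tau\in S_r}\prod_{a}x_{\tau(a)}^{r-a}\prod_{b<a}(x_{\tau(a)}-x_{\tau(b)})^{-1}=1$. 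The paper packages exactly this by introducing an auxiliary function $V_I$ (equal to $U_{I,\beta=0}$ times the non-symmetric correction factors) whose single finite residue is taken at each $\sigma$, so you should either adopt that device or invoke the identity explicitly; with that repair your argument goes through as in the paper.
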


\begin{proof}
We have
\[
U_{I,\beta=0}=
\prod_{u=1}^d (1-\alpha_u)^{n-i_u}
\prod_{u=d+1}^k (-\alpha_u)^n
\prod_{u=1}^d(-\alpha_u)^{i_u-1}
\prod_{u=1}^d \prod_{v=u+1}^k \frac{1+\alpha_u-\alpha_v}{\alpha_u-\alpha_v}.
\]
Temporarily denote $\alpha_u=\omega_{k+1-u}$, that is consider the list of $\alpha_i$ variables backwards. After rearrangements we obtain
\[
U_{I,\beta=0}=
\prod_{a=1}^r (-\omega_a)^n
\prod_{a=r+1}^k (-\omega_a)^{i_{k+1-a}-1}
\prod_{a=r+1}^k (1-\omega_a)^{n-i_{k+1-a}}
\prod_{a=r+1}^k \prod_{b=1}^{a-1} \frac{1+\omega_a-\omega_b}{\omega_a-\omega_b}.
\]
Define
\begin{equation}\label{eqn:VI}
V_I=U_{I,\beta=0} \cdot \prod_{a=1}^r (-\omega_a)^{r-a} \prod_{a=1}^r\prod_{b=1}^{a-1}\frac{1}{\omega_a-\omega_b}.
\end{equation}
We claim that
\begin{equation}\label{eqn:sym}
\sum_{\sigma\in S_r} V_I(\sigma(\omega_1,\ldots,\omega_r),\omega_{r+1},\ldots,\omega_k)=
U_{I,\beta=0}.
\end{equation}
Indeed, since $U_{I,\beta=0}$ is symmetric in $\omega_1,\ldots,\omega_r$, it can be pulled out of the symmetrization, and the symmetrization of the last two factors of
(\ref{eqn:VI}) is well known to be 1.

Another interpretation of (\ref{eqn:sym}) is that the LHS of (\ref{eqn:sym}) equals
\[
\frac{1}{r!}
\sum_{\sigma\in S_r} U_{I,\beta=0}(\sigma(\omega_1,\ldots,\omega_r),\omega_{r+1},\ldots,\omega_k).
\]
Thus, for the weight function we obtain
\begin{equation}\label{eqn:WV}
W_{I,\beta=0}=\sum_{\sigma\in S_k} V_I(\sigma(\omega_1,\ldots,\omega_k)).
\end{equation}
The main observation is that
\[
V_I=\Res{z_k=-\omega_k}\ldots \Res{z_2=-\omega_2} \Res{z_1=-\omega_1}
\left(
\frac{f_I}{\prod_{u=1}^k\prod_{v=1}^k (z_u+\omega_v)} dz_1\ldots dz_k
\right),
\]
or more generally
\[
V_I(\sigma(\omega_1,\ldots,\omega_k))=\Res{z_k=-\omega_{\sigma(k)}}\ldots \Res{z_2=-\omega_{\sigma(2)}} \Res{z_1=-\omega_{\sigma(1)}}
\left(
\frac{f_I}{\prod_{u=1}^k\prod_{v=1}^k (z_u+\omega_v)} dz_1\ldots dz_k
\right),
\]
and that there are no other non-zero residues of this differential form in $\C^n$. Hence (\ref{eqn:WV}) and iterated applications of the Residue Theorem proves our theorem.
\end{proof}

\section{CSM classes of matrix Schubert cells are weight functions} \label{sec:csmW}

Now we calculate the CSM classes of matrix Schubert cells.

\begin{theorem} \label{thm:csmW}
Consider the $GL_k(\C)\times B^-_n$ representation $\Hom(\C^k,\C^n)$ and the description of the orbits in Section \ref{sec:SchubertCells}. For the  equivariant
Chern-Schwartz-MacPherson class of the orbit $\Omega_I$ we have
\[
\csm(\Omega_I)=W_I(\aalpha,\bbeta).
\]
\end{theorem}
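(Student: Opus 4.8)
The plan is to verify that the weight function $W_I(\aalpha,\bbeta)$ satisfies the three interpolation properties (\ref{iprinc})--(\ref{idegree}) of Theorem~\ref{thm:intchar}, which (since the representation $\Hom(\C^k,\C^n)$ of $GL_k(\C)\times B^-_n$ contains the scalars, has finitely many orbits, and satisfies the Euler condition) uniquely characterizes $\csm(\Omega_I)$. So the proof is an identification of fixed-point restrictions, not a resolution-of-singularities computation.

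First I would set up the localization data explicitly. For an orbit $\Omega_J$ with representative $M_J$, the stabilizer $G_J\subset GL_k(\C)\times B^-_n$ must be described, together with the restriction $\phi_J\colon\C[\aalpha,\bbeta]^{S_k}\to H^*(BG_J)$; concretely this amounts to expressing which $\alpha_u$'s and $\beta_v$'s get identified (or paired up by the permutation $\sigma_J$ attached to $J$) under restriction to a maximal torus of $G_J$. The tangent space $T_J$ and normal space $N_J$ are already given combinatorially by the sets $\A_0\cup\A_2\cup\A_4$ and its complement, with weights $\beta_v-\alpha_u$; so $c(T_J)$ and $e(N_J)$ become explicit products of the $(1+\beta_v-\alpha_u)$ and $(\beta_v-\alpha_u)$ factors, evaluated after the substitution $\phi_J$. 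Property (\ref{iprinc}) is then the computation $\phi_I(W_I)=c(T_I)e(N_I)$: restricting $W_I$ at its ``own'' fixed point, only the term of the symmetrization $\sum_\sigma U_I(\sigma\aalpha;\bbeta)$ corresponding to the identity-type permutation survives as the leading contribution, the spurious $\tfrac{1+\alpha_u-\alpha_v}{\alpha_u-\alpha_v}$ denominators cancel, and the three $U_I$ products $\prod(1+\beta_v-\alpha_u)$, $\prod(\beta_v-\alpha_u)$, $\prod(\beta_v-\alpha_u)$ reassemble exactly into $c(T_I)e(N_I)$ once we match the index ranges $v>i_u$, all $v$ (for $u>d$), $v<i_u$ against $\A_4\cup\A_0$, $\A_3$, $\A_1$. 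I expect this to be a careful but essentially bookkeeping argument on the four index sets.

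Next, property (\ref{idivis}): $\phi_J(W_I)$ is divisible by $c(T_J)$ for every $J\leq I$. Here the natural route is \emph{not} to restrict $W_I$ but rather to invoke the structural fact already proved in the paper: by Property~(\ref{iPart}), $\csm$ of any orbit closure is divisible by $c(T_0 X_0)$ when a transversal complement exists, and the $W_I$ are built precisely so that each $U_I$ carries the factors $\prod_{v>i_u}(1+\beta_v-\alpha_u)$, which after restriction along $\phi_J$ contain the $(1+\text{weight})$ factors over the boxes of $\A_4$ and $\A_0$ for the stratum $\Omega_J$ — the ``+1''-shifted tangent weights. One has to check that the symmetrization does not destroy this divisibility; this should follow because $c(T_J)$ is $S_k$-invariant in the relevant variables and can be pulled out. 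Property (\ref{idegree}) is a degree count: $\deg(W_I)=kn-d$ by the paper's own remark, while $\deg(c(T_J)e(N_J))$ equals $\dim\Hom(\C^k,\C^n)$ minus the number of zero weights of $G_J$; after restriction $\phi_J$ strictly lowers the top degree whenever $J\neq I$ because some tangent weight $\beta_v-\alpha_u$ of $\Omega_J$ becomes identically zero, whereas the corresponding factor of $W_I$ at that fixed point does not vanish (it specializes to $1$ from a $(1+\cdots)$ factor). Making the inequality strict, uniformly in $J$, is where I'd be most careful.

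The main obstacle I anticipate is property (\ref{iprinc}): proving the exact equality $\phi_I(W_I)=c(T_I)e(N_I)$ requires understanding precisely how $\phi_I$ acts — i.e. which $\alpha$'s and $\beta$'s are identified by the stabilizer torus of $M_I$ — and then showing that among the $k!/(k-d)!$ surviving symmetrization terms, all but the ``diagonal'' one restrict to zero (because they hit a factor $\beta_v-\alpha_u$ that $\phi_I$ kills, or an $\frac{1+\alpha_u-\alpha_v}{\alpha_u-\alpha_v}$ with vanishing numerator), and that the diagonal term reorganizes cleanly into $c(T_I)e(N_I)$ with the $r=k-d$ ``free'' columns (the $\A_3$ part) contributing the $\prod_{u>d}\prod_v(\beta_v-\alpha_u)$ piece of $e(N_I)$. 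A clean way to organize this might be to prove the $\beta=0$ case first using the residue form of Theorem~\ref{thm:Wres} (where the fixed-point restriction becomes an iterated residue evaluation), and then argue that the full $\bbeta$-dependence is determined by the same combinatorics — or alternatively to verify all three properties directly and let Theorem~\ref{thm:intchar} finish the job. I would lean toward the latter, since it avoids any extra deformation argument and keeps the proof within the interpolation framework the paper has just built.
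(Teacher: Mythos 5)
Your plan is exactly the paper's: one shows that $W_I$ satisfies properties (\ref{iprinc})--(\ref{idegree}) of Theorem~\ref{thm:intchar}, with $\phi_J$ given explicitly by $\alpha_u\mapsto\beta_{j_u}$ for $u\leq d$ and the identity on the remaining variables, and the key observation that after restriction only the permutations $\sigma$ with $i_{\sigma(u)}\leq j_u$ survive; the treatment of (\ref{iprinc}) and (\ref{idegree}) you sketch matches the paper's.

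The one step you should repair is your proposed route to property (\ref{idivis}). Invoking Property~(\ref{iPart}) here is circular: that property asserts divisibility of $\csm$ of an invariant variety, and at this stage of the argument you do not yet know that $W_I$ (or any combination of the $W$'s) is a CSM class --- that is precisely what the theorem is to establish. The paper instead proves divisibility of $\phi_J(W_I)$ by $c(T_{\Omega_J})$ \emph{directly and term by term}: for each surviving $\sigma\in S_k^*$ one checks that every factor $\phi_J(1+\beta_v-\alpha_u)$ with $(v,u)\in\A_0\cup\A_2\cup\A_4$ either equals $1$ (the $\A_0$ boxes) or occurs among the factors of $\phi_J(P_1)$ or $\phi_J(P_4)$, and that these factors are pairwise distinct. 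Your fallback remark that ``$c(T_J)$ is $S_k$-invariant and can be pulled out'' is not the right mechanism either --- the individual summands $U_I(\sigma(\aalpha);\bbeta)$ are not symmetric, and what saves the day is the term-by-term divisibility just described (note also that it is used only after the vanishing terms outside $S_k^*$ have been discarded). With that substitution your outline coincides with the paper's proof; note also that (\ref{iPart}) \emph{is} legitimately used in the paper, but only in the proof of Theorem~\ref{thm:intchar} itself, to show that the true class $\csm(\Omega)$ satisfies axiom (\ref{idivis}) --- not to show that the candidate $W_I$ does.
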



\begin{proof}
We will show that $W_I$ satisfies the properties of Theorem \ref{thm:intchar} for the representation $\Hom(\C^k,\C^n)$.

Let $J=\{j_1<\ldots<j_d\}$, $d\leq k$. By looking at the matrix $M_J$ one finds that the maximal torus of $G_{\Omega_J}$ is of rank $n+k-d$ and the map $\phi_J:H^*_{GL_k(\C)
\times B_n^-}(V) \to H^*(BG_{\Omega_J})$ can be described by
\[
\alpha_u\mapsto
\begin{cases}
\beta_{j_u} & u=1,\ldots,d \\
\alpha_u & u=d+1,\ldots, k,
\end{cases}
\qquad\qquad
\beta_v\mapsto \beta_v, v=1,\ldots,n.
\]
Using the notations in Section \ref{sec:SchubertCells} we have that 
\[
c(T_{\Omega_j})=
\phi_J\left( \prod_{(v,u)\in \T_J} (1+\beta_v-\alpha_u)\right),
\
e(N_{\Omega_J}) =
\phi_J\left(  \prod_{(v,u)\in \N_J} (\beta_v-\alpha_u)\right),
\]
and that $\deg\big( c(T_{\Omega_J}) e(N_{\Omega_J})\big)=nk-d$. It follows that the representation satisfies the Euler condition.

Let $I=\{i_1<\ldots<i_e\}$ and recall the definition of $U_I$:
\[
U_I=
\underbrace{\prod_{u=1}^e\prod_{v=i_u+1}^n (1+\beta_v-\alpha_u)}_{P_1}
\underbrace{\prod_{u=e+1}^k \prod_{v=1}^n (\beta_v-\alpha_u)}_{P_2}
\underbrace{\prod_{u=1}^e\prod_{v=1}^{i_u-1} (\beta_v-\alpha_u)}_{P_3}
\underbrace{\prod_{u=1}^e \prod_{v=u+1}^k \frac{1+\alpha_u-\alpha_v}{\alpha_u-\alpha_v}}_{P_4}.
\]

We have that $\phi_J(W_I)=$
\begin{equation}\label{eqn:phiU}
\phi_J\left( \frac{1}{(k-e)!} \sum_{\sigma\in S_k} U_I( \sigma(\aalpha);\bbeta)\right)
=
\frac{1}{(k-e)!} \sum_{\sigma\in S_k} U_I( \sigma( \beta_1,\ldots,\beta_d,\alpha_{d+1},\ldots,\alpha_k);\bbeta).
\end{equation}
The main observation of the proof is that, due to factors of $P_2$ and $P_3$ we have
\begin{equation}\label{obs}
e<d
 \text{ or } \exists a\in\{1,\ldots,d\} \text{ s.t. } i_{\sigma(u)}>j_u \qquad \Rightarrow \qquad
U_I( \sigma( \beta_1,\ldots,\beta_d,\alpha_{d+1},\ldots,\alpha_k);\bbeta)=0.
\end{equation}
Therefore in the rest of the proof we will assume that $e\geq d$, and define $S_k^*$ by $\sigma\in S_k^*$ if $i_{\sigma(u)}\leq j_u$ for all $u=1,\ldots,d$. Then we have
\begin{equation} \label{eqn:Skstar}
\phi_J(W_I)=\frac{1}{(k-e)!} \sum_{\sigma\in S_k^*} \phi_J( U_I(\sigma(\aalpha);\bbeta)).
\end{equation}

Now we are ready to prove Properties (\ref{iprinc})-(\ref{idegree}).

If $I=J$ (in particular $d=e$) then $\sigma\in S_k^*$ iff $\sigma(u)=u$ for $u=1,\ldots,e$. Hence there are $(k-e)!$ terms in (\ref{eqn:Skstar}) and each of them is
\[\phi_J\left(
\prod_{(v,u)\in \A_2} (1+\beta_v-\alpha_u)
\prod_{(v,u)\in \A_3} (\beta_v-\alpha_u)
\prod_{(v,u)\in \A_1} (\beta_v-\alpha_u)
\prod_{(v,u)\in \A_4} \frac{( 1+\beta_v-\alpha_u)}{(\beta_v-\alpha_u)}
\right)=
\]
\[
\phi_J\left(
\prod_{(v,u)\in \T_I} (1+\beta_v-\alpha_u)
\prod_{(v,u)\in \N_I} (\beta_v-\alpha_u)
\right)=c(T_{\Omega_J})e(N_{\Omega_J}).
\]
This proves Property (\ref{iprinc}).

To prove Property (\ref{idivis}) we need to show that $\prod_{(v,u)\in \A_0 \cup \A_2 \cup \A_4} \phi_J(1+\beta_v-\alpha_u)$ divides the expression in~(\ref{eqn:Skstar}). We
claim that this divisibility holds for every term of~(\ref{eqn:Skstar}). A term of (\ref{eqn:Skstar}) is a product of $\phi_J$-images of the factors in $P_1, P_2, P_3$, and
$P_4$. For $(v,u)\in \A_0$ we have $\phi_J(1+\beta_v-\alpha_u)=1$. For $(v,u)\in \A_2$ the factor $\phi_J(1+\beta_v-\alpha_u)$ appears as one factor in $\phi_J(P_1)$
(because of $\sigma\in S_k^*$). If $(v,u)\in \A_4$ then the factor $\phi_J(1+\beta_v-\alpha_u)$ appears either as a factor of $\phi_J(P_1)$  or $\phi_J(P_4)$ (again, because
of $\sigma\in S_k^*$). The factors of $\prod_{(v,u)\in \A_2 \cup \A_4} \phi_J(1+\beta_v-\alpha_u)$ are all different, hence we proved the divisibility Property
(\ref{idivis}).

To prove Property (\ref{idegree}) recall that if $e<d$ then $\phi_J(W_I)=0$. If $e>d$ then
\[
\deg(\phi_J(W_I))\leq  \deg(W_I) =nk-e < nk-d=\deg ( c(T_{\Omega_J})e(N_{\Omega_J})).
\]
Let us assume that $d=e$ but $J\not= I$. Then in each term of (\ref{eqn:Skstar}) there is an $u\in \{1,\ldots,d\}$ for which $j_u>i_{\sigma(u)}$. This implies that among the
factors of $\phi_J(P_1)$ one of them is $\phi_J(1+\beta_{j_u}-\beta_{j_u})=1$. Hence
\[
\deg(\phi_J(W_I))\leq  \deg(W_I)-1 =nk-e -1 < nk-d=\deg ( c(T_{\Omega_J})e(N_{\Omega_J})),
\]
which completes the proof.
\end{proof}

\begin{corollary}\label{cor:osztva}
Consider the $GL_k(\C)\times B^-_n$ representation $\Hom(\C^k,\C^n)$ and the description of the orbits in Section \ref{sec:SchubertCells}. For the  equivariant
Segre-Schwartz-MacPherson class of the orbit $\Omega_I$ we have
\[
\ssm(\Omega_I)=\frac{W_I(\aalpha,\bbeta)}{\prod_{u=1}^k\prod_{v=1}^n (1+\beta_v-\alpha_u)}.
\]
\qed
\end{corollary}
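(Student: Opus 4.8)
The plan is to derive Corollary~\ref{cor:osztva} directly from Theorem~\ref{thm:csmW} together with the definition $\ssm(f)=\csm(f)/c(TV)$ of the equivariant SSM class recalled in Section~\ref{sec:introCSM}. Since the ambient space $V=\Hom(\C^k,\C^n)$ is a vector space, its tangent bundle at every point is canonically $V$ itself, so as an equivariant characteristic class $c(TV)=e(\,\cdot\,)$-style total Chern class of the representation $V$; concretely, $c(TV)=\prod_{v=1}^n\prod_{u=1}^k(1+\beta_v-\alpha_u)$, because the weights of $\Hom(\C^k,\C^n)$ are exactly the $\beta_v-\alpha_u$ as noted around equation~(\ref{eqn:thering}).

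First I would spell out that $H^*_G(V)=H^*(BG)$ (the representation retracts to a point equivariantly), so that $TV$ is the trivial bundle $V\times V\to V$ with fiber the representation $V$; its equivariant total Chern class is therefore the product of $(1+w)$ over the weights $w$ of $V$. Second, I would record the list of weights: $w=\beta_v-\alpha_u$, $v=1,\dots,n$, $u=1,\dots,k$, giving $c(TV)=\prod_{u=1}^k\prod_{v=1}^n(1+\beta_v-\alpha_u)$. Third, I would invoke Theorem~\ref{thm:csmW}, which identifies $\csm(\Omega_I)=W_I(\aalpha,\bbeta)$, and then simply divide:
\[
\ssm(\Omega_I)=\frac{\csm(\Omega_I)}{c(TV)}=\frac{W_I(\aalpha,\bbeta)}{\prod_{u=1}^k\prod_{v=1}^n(1+\beta_v-\alpha_u)},
\]
which is the claimed formula. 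I should also remark that this quotient is a well-defined element of the completion of $H^*(BG)$: the denominator has constant term $1$, hence is invertible in the completed (graded) ring, consistently with the convention in Section~\ref{sec:introCSM} that SSM classes live in the completion.

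There is essentially no obstacle here; the only point requiring a line of justification is the identification $c(TV)=\prod_{u,v}(1+\beta_v-\alpha_u)$, i.e.\ that the tangent bundle of the linear $G$-space $V$ is the trivial bundle with fiber the representation $V$, so that its equivariant Chern class is read off from the weights listed in Section~\ref{sec:SchubertCells}. Everything else is the formal definition $\ssm=\csm/c(TV)$ combined with Theorem~\ref{thm:csmW}, so the proof is a two-line computation and the ``\qed'' in the statement is appropriate.
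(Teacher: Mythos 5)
Your proposal is correct and matches the paper's (implicit) argument: the corollary is stated with a \qed precisely because it is the immediate combination of Theorem~\ref{thm:csmW} with the definition $\ssm=\csm/c(TV)$, where $c(TV)=\prod_{u=1}^k\prod_{v=1}^n(1+\beta_v-\alpha_u)$ is read off from the weights $\beta_v-\alpha_u$ of the representation. Your extra remarks on the equivariant triviality of $TV$ and invertibility of the denominator in the completion are the right justifications and add nothing beyond what the paper intends.
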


\noindent The $GL_k(\C)$-equivariant CSM and SSM classes of $\Omega_I$ are hence $W_{I,\beta=0}$ and $W_{I,\beta=0}/\prod_{u=1}^k (1-\alpha_u)^n$.

\section{Symmetric functions. Residue generator functions. } \label{sec:sym}

In Sections \ref{sec:gen} and \ref{sec:GFpartition} we will give generating function descriptions of certain CSM/SSM classes. In this section we recall Schur functions, and
develop the (``iterated residue'') generating function tool we will use later.

Below we will work with integer vectors $(\lambda_1,\ldots,\lambda_\mu)$, and some of them will be weakly decreasing, i.e. satisfying $\lambda_i\geq \lambda_{i+1}$. A
partition is a class of weakly decreasing integer vectors generated by the relation $(\lambda_1,\ldots,\lambda_\mu)\sim(\lambda_1,\ldots,\lambda_\mu,0)$.

Let us warn the reader that certain theorems will deal with weakly decreasing integer vectors, and in those for example $(3,1)$ and $(3,1,0)$ are {\em different} integer
vectors.

\subsection{Schur functions}

Let $c_i$, $i=1,2,\ldots$, be a sequence of variables and set $c_{<0}=0$, $c_0=1$, and declare $\deg(c_i)=i$. For an integer vector
$\lambda=(\lambda_1,\ldots,\lambda_\mu)\in \Z^\mu$ define
\[
\Sc_\lambda=\det( c_{\lambda_i+j-i} )_{i,j=1,\ldots,\mu} \in \C[c_1,c_2,\ldots].
\]
If $\Sc_\lambda\not=0$ then its degree is $|\lambda|=\sum \lambda_i$. We have $\Sc_\lambda=\Sc_{\lambda,0}$ as well as the straightening laws
\begin{equation}\label{eqn:str}
\Sc_{I,a,b,J} = -\Sc_{I,b-1,a+1,J},
 \quad \Sc_{I,a,a+1,J}=0.
\end{equation}
The collection of $\Sc_\lambda$'s for partition $\lambda$'s is a basis of the vector space of polynomials in $c_i$. For $\lambda$ a partition $\Sc_\lambda$ is called a Schur
function, other $\Sc_\lambda$ will be called fake Schur functions.
Later we will also deal with formal (infinite) sums of $\Sc_\lambda$'s, i.e. we formally work in the completion $\C[[c_1,c_2,\ldots]]$. Since the straightening laws respect
degree, an infinite sum of $\Sc_\lambda$'s make sense as long as for every $n$ there are finitely many terms for which $|\lambda|=n$.

Certain substitutions will play a key role below. Namely, let $\alpha_1,\ldots,\alpha_k$, and $\beta_1,\ldots,\beta_n$ be two finite sets of variables (all declared to have
degree 1), and define
\[
\rho^{k,n}:
\C[c_1,c_2,\ldots] \to
\
\C[\alpha_1,\ldots,\alpha_k;\beta_1,\ldots,\beta_n]^{S_k \times S_n}
\]
by
\begin{equation} \label{eqn:defc}
\rho^{k,n}(1+c_1t+c_2t^2+\ldots)= \frac{\prod_{i=1}^n (1+\beta_it)}{\prod_{i=1}^k (1+\alpha_it)}.
\end{equation}

For example $\rho^{k,n}(\Sc_1)=\sum_{j=1}^{n}\beta_j-\sum_{i=1}^k \alpha_i$, and
\[
\rho^{k,0}(\Sc_{11})=\sum \{\alpha_i\alpha_j : 1\leq i<j\leq k\},
\qquad
\Sc^{0,n}_{11}=\sum \{\beta_i\beta_j : 1\leq i \leq j\leq n \}.
\]

\begin{lemma} \label{lem:schur}
For an integer vector $\lambda=(\lambda_1,\ldots,\lambda_\mu)$ we have
\[
\Sc_\lambda=(-1)^\mu
\RES_\mu
\left(
\prod_{i=1}^{\mu} z_i^{\lambda_i}
\cdot
\prod_{1\leq i < j \leq \mu} \left(1-\frac{z_i}{z_j}\right)
\cdot
\prod_{i=1}^{\mu}
\sum_{u=0}^\infty \frac{c_u}{z_i^u}
\cdot
\prod_{i=1}^\mu \frac{dz_i}{z_i}
\right)
\]
\[
\rho^{k,n}(\Sc_\lambda)=(-1)^\mu
\RES_\mu
\left(
\prod_{i=1}^{\mu} z_i^{\lambda_i}
\cdot
\prod_{1\leq i < j \leq \mu} \left(1-\frac{z_i}{z_j}\right)
\cdot
\prod_{i=1}^{\mu}
\frac{  \prod_{u=1}^n (1+\beta_u/z_i) }{ \prod_{u=1}^k (1+\alpha_u/z_i)}
\cdot
\prod_{i=1}^\mu \frac{dz_i}{z_i}
\right)
\]
\end{lemma}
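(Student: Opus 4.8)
The plan is to prove both residue formulas simultaneously by recognizing that the second one is just the image of the first under $\rho^{k,n}$, applied termwise inside the iterated residue. Indeed, $\rho^{k,n}$ is a ring homomorphism $\C[c_1,c_2,\ldots]\to\C[\alpha;\beta]^{S_k\times S_n}$, and its defining relation (\ref{eqn:defc}) says precisely that $\sum_{u\ge 0}\rho^{k,n}(c_u)/z^u = \prod_{u=1}^n(1+\beta_u/z)/\prod_{u=1}^k(1+\alpha_u/z)$ (substitute $t=1/z$ in (\ref{eqn:defc})). So once the first formula is established as an identity in $\C[[c_1,c_2,\ldots]]$, applying $\rho^{k,n}$ to both sides — it commutes with the formal iterated residue $\RES_\mu$ because $\RES_\mu$ only extracts coefficients of monomials in the $z_i$ — yields the second. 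Hence it suffices to prove the first formula.

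For the first formula, I would compute the iterated residue directly. Recall $\Res{z=\infty}$ of a Laurent expansion picks out (minus) the coefficient of $z^{-1}$. Inside the product $\prod_{i=1}^\mu z_i^{\lambda_i}\cdot\prod_{i<j}(1-z_i/z_j)\cdot\prod_i(\sum_{u\ge 0} c_u z_i^{-u})\cdot\prod_i dz_i/z_i$, expand the Vandermonde-type factor $\prod_{1\le i<j\le\mu}(1-z_i/z_j)=\sum_{w\in S_\mu}(-1)^{\ell(w)}\prod_i z_i^{?}$ — more precisely, this product equals $\sum_{w\in S_\mu}\operatorname{sgn}(w)\,z_1^{m_{w(1)}}\cdots z_\mu^{m_{w(\mu)}}$ for appropriate exponents coming from the standard expansion of $\prod_{i<j}(z_j-z_i)/\prod_j z_j^{\mu-1}$; concretely $\prod_{i<j}(1-z_i/z_j)=\det(z_i^{j-\mu})$ up to reindexing. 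I would substitute this expansion, so that the whole integrand becomes a signed sum over $w\in S_\mu$ of products $\prod_i z_i^{\lambda_i + (\text{exponent from }w) - u_i}\,c_{u_i}\,dz_i/z_i$, and then take the residue at $\infty$ in each variable independently: the $z_i$-residue forces the total exponent of $z_i$ to be $0$, i.e. it selects $u_i = \lambda_i + j - i$ for the appropriate column index $j=w(i)$ (after matching conventions), contributing $c_{\lambda_i + j - i}$. Summing with signs over $w\in S_\mu$ reproduces exactly $\det(c_{\lambda_i+j-i})_{i,j=1,\ldots,\mu}=\Sc_\lambda$. The overall sign $(-1)^\mu$ accounts for the $\mu$ residues at infinity.

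The bookkeeping to watch carefully — and the step I expect to be the main obstacle — is matching the exponent conventions: one must check that the expansion $\prod_{1\le i<j\le\mu}(1-z_i/z_j)$ gives the sign $\operatorname{sgn}(w)$ and the exponent shift $j-i$ (not $i-j$, and not off by $\mu-1$) that land exactly on the entries $c_{\lambda_i+j-i}$ of the Jacobi–Trudi-type determinant. A clean way to organize this is to note $\prod_{i<j}(1-z_i/z_j) = \bigl(\prod_{j}z_j^{-(j-1)}\bigr)\prod_{i<j}(z_j-z_i)$ and $\prod_{i<j}(z_j-z_i)=\det(z_i^{\,\mu-j})=\sum_w\operatorname{sgn}(w)\prod_i z_i^{\,\mu-w(i)}$, then combine with $\prod_i z_i^{\lambda_i}\prod_i z_i^{-1}$ and the $z_i^{-(w(i)-1)}$ correction; the exponent of $z_i$ becomes $\lambda_i - w(i) + \mu - (w(i)-1)\cdot 0\ldots$ — so the index arithmetic has to be done once, correctly, and then everything follows. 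I would also remark that the infinite sum $\sum_{u\ge 0}c_u/z_i^u$ causes no convergence issue: for each fixed monomial only finitely many $u_i$ contribute to a given residue, matching the completion remarks already made for $\Sc_\lambda$-expansions in the text.
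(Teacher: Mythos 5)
Your proposal is correct and follows essentially the same route as the paper's (sketched) proof: both amount to expanding $\prod_{i<j}(1-z_i/z_j)=\sum_{w\in S_\mu}\operatorname{sgn}(w)\prod_i z_i^{w(i)-i}$ so that the iterated residue at infinity selects $u_i=\lambda_i+w(i)-i$ in each factor and reproduces term by term the permutation expansion of $\det(c_{\lambda_i+j-i})_{i,j}$, and the index bookkeeping you flag does close up exactly this way. The only organizational difference is that you deduce the second identity from the first by applying the ring homomorphism $\rho^{k,n}$ (legitimate, since $\rho^{k,n}$ acts on coefficients and commutes with extracting the coefficient of $\prod_i z_i^{-1}$, and (\ref{eqn:defc}) with $t=1/z_i$ turns $\sum_u c_u/z_i^u$ into the stated rational factor expanded at $z_i=\infty$), whereas the paper treats the two formulas in parallel via the Residue Theorem.
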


\begin{proof}
Residue formulas for Schur functions---in different disguise---are well known, hence here we only sketch the proof. Write the defining determinant of $\Sc_\lambda$ or
$\rho^{k,n}(\Sc_\lambda)$ as a sum of terms corresponding to permutations. Each term of this sum is the residue of the differential form named in the theorem at a point of
$\C^\mu$. The form has no other finite residues in $\C^\mu$, hence the sum of these residues is $(-1)^\mu$ times the residues at infinity (cf. proof of Theorem
\ref{thm:Wres}).
\end{proof}

\subsection{The $\SSS$ operation}

Let $z_1,\ldots,z_\mu$ be an ordered set of variables. For a monomial $z_1^{\lambda_1}\ldots z_\mu^{\lambda_\mu}$ define
\[
\SSS_{z_1,\ldots,z_\mu}( z_1^{\lambda_1}\ldots z_\mu^{\lambda_\mu} ) = \Sc_{\lambda_1,\ldots,\lambda_\mu}.
\]
For polynomials in $z_1,\ldots,z_\mu$ we extend this operation linearly.
Since the {\em straightening rules} (\ref{eqn:str}) respect the degree the operation extends formally to formal power series, resulting in infinite sums of $\Sc_\lambda$'s,
that is, formal power series in $c_i$'s. For example
\begin{equation} \label{eqn:Sex}
\SSS_{z_1,z_2}\left(z_1^3z_2^1 \cdot \sum_{i=0}^{\infty} z_2^i\right)=
\sum_{i=1}^{\infty} \Sc_{3i}=
\Sc_{31}+\Sc_{32}+\Sc_{33}-
\sum_{i=4}^{\infty} \Sc_{i4}.
\end{equation}
Observe that the middle expression is an expansion in terms of (possibly) fake Scher functions, and the last expression is an expansion in terms of Schur functions.

We will use {\em certain} rational functions to encode formal power series. Namely, by convention, the rational functions of the form
\[
\frac{p(z_1,\ldots,z_\mu)}{ \prod_{i=1}^\mu  (1+\kappa_i z_i)^{l_i}},
\]
where $p$ is a polynomial, $\kappa_i\in \Z$, will denote the formal power series obtained by replacing each $1/(1+\kappa_i z_i)$ factor by $\sum_{j=0}^{\infty} (-\kappa_i
z_i)^j$. For example, by $\SSS_{z_1,z_2}(z_1^3z_2/(1-z_2))$ we mean the same expression as (\ref{eqn:Sex}).


Define $\SSS_{\zz}^{k,n}(f(\zz))=\rho^{k,n}(\SSS_{\zz}(f(\zz)))$.

The following proposition---which follows directly from Lemma \ref{lem:schur}---is the reason for calling the $\SSS$-operation the ``iterated residue operation''.

\begin{proposition} \label{prop:RESS}
For a polynomial or formal power series $p(z_1,\ldots,z_\mu)$ we have
\begin{equation}\label{eqn:SchurRES1}
(-1)^\mu
\RES_\mu
\left(
p(\zz)
\cdot
\prod_{1\leq i < j \leq \mu} \left(1-\frac{z_i}{z_j}\right)
\cdot
\prod_{i=1}^{\mu}
\sum_{u=0}^\infty \frac{c_u}{z_i^u}
\cdot
\prod_{i=1}^\mu \frac{dz_i}{z_i}
\right)=
\SSS_{z_1,\ldots,z_\mu}(p(\zz)),
\end{equation}
\begin{equation}\label{eqn:SchurRES2}
(-1)^\mu
\RES_\mu
\left(
p(\zz)
\cdot
\prod_{1\leq i < j \leq \mu} \left(1-\frac{z_i}{z_j}\right)
\cdot
\prod_{i=1}^{\mu}
\frac{  \prod_{u=1}^n (1+\beta_u/z_i) }{ \prod_{u=1}^k (1+\alpha_u/z_i)}
\cdot
\prod_{i=1}^\mu \frac{dz_i}{z_i}
\right)=
\SSS^{k,n}_{z_1,\ldots,z_\mu}(p(\zz)).
\end{equation}
\qed
\end{proposition}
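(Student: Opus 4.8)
\textbf{Proof proposal for Proposition \ref{prop:RESS}.}

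The plan is to deduce both residue identities directly from Lemma \ref{lem:schur} by linearity, after reducing from formal power series to monomials. First I would treat \eqref{eqn:SchurRES1}. Since both sides are defined on formal power series by extending linearly from monomials (the $\SSS$-operation by fiat, the residue side because $\RES_\mu$ is a finite linear combination of Laurent coefficients and hence also extends termwise over power series, using the convention that every rational factor $1/(1+\kappa_i z_i)$ is first expanded as $\sum_j (-\kappa_i z_i)^j$), it suffices to verify the identity when $p(\zz)=z_1^{\lambda_1}\cdots z_\mu^{\lambda_\mu}$ is a single monomial. For such a $p$, the left-hand side of \eqref{eqn:SchurRES1} is
\[
(-1)^\mu \RES_\mu\left( \prod_{i=1}^\mu z_i^{\lambda_i} \cdot \prod_{1\le i<j\le\mu}\left(1-\frac{z_i}{z_j}\right)\cdot \prod_{i=1}^\mu \sum_{u=0}^\infty \frac{c_u}{z_i^u}\cdot \prod_{i=1}^\mu \frac{dz_i}{z_i}\right),
\]
which is exactly the right-hand side of the first displayed formula in Lemma \ref{lem:schur}, hence equals $\Sc_{\lambda_1,\ldots,\lambda_\mu}=\SSS_{z_1,\ldots,z_\mu}(z_1^{\lambda_1}\cdots z_\mu^{\lambda_\mu})$. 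This establishes \eqref{eqn:SchurRES1} on monomials, and therefore on all polynomials and formal power series.

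For \eqref{eqn:SchurRES2} I would proceed identically, using the second formula in Lemma \ref{lem:schur}: for a monomial $p$, the left-hand side of \eqref{eqn:SchurRES2} matches the residue expression for $\rho^{k,n}(\Sc_\lambda)$ verbatim, so it equals $\rho^{k,n}(\Sc_{\lambda_1,\ldots,\lambda_\mu})=\rho^{k,n}(\SSS_{\zz}(p(\zz)))=\SSS^{k,n}_{z_1,\ldots,z_\mu}(p(\zz))$ by the definition $\SSS^{k,n}_{\zz}(f)=\rho^{k,n}(\SSS_{\zz}(f))$. Alternatively — and this is probably cleaner to write — one observes that \eqref{eqn:SchurRES2} is obtained from \eqref{eqn:SchurRES1} simply by applying the ring homomorphism $\rho^{k,n}$, which by \eqref{eqn:defc} sends $\sum_{u\ge 0} c_u/z_i^u$ to $\prod_{u=1}^n(1+\beta_u/z_i)/\prod_{u=1}^k(1+\alpha_u/z_i)$; since $\rho^{k,n}$ commutes with the (finite, in each degree) linear operation $\RES_\mu$, applying it to both sides of \eqref{eqn:SchurRES1} yields \eqref{eqn:SchurRES2} at once.

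The only genuine subtlety — the "main obstacle," though it is bookkeeping rather than real difficulty — is making sure the passage from polynomials to formal power series is legitimate on the residue side. Here one uses that $\prod_{1\le i<j\le\mu}(1-z_i/z_j)\cdot\prod_i\sum_u c_u/z_i^u$, expanded, contributes to the coefficient of a fixed monomial $z_1^{-1}\cdots z_\mu^{-1}$ only finitely many terms in each total $c$-degree, so that $\RES_\mu$ of the product with any formal power series $p(\zz)$ is a well-defined formal power series in the $c_i$, computed degree by degree; and that the $\SSS$-operation is compatible with this because the straightening laws \eqref{eqn:str} preserve degree. With this remark in place, termwise linearity is justified and the proof is complete.
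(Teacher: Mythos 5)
Your proposal is correct and follows exactly the route the paper intends: the paper states that the proposition ``follows directly from Lemma \ref{lem:schur},'' and your reduction to monomials by linearity, together with the degree-by-degree justification for formal power series, is precisely the argument being gestured at. The observation that \eqref{eqn:SchurRES2} is obtained from \eqref{eqn:SchurRES1} by applying $\rho^{k,n}$ is a clean way to package the second identity.
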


\section{Generating functions parameterized by weakly decreasing sequences}\label{sec:gen}

In this section we prove a generating sequence descriptions of the $GL_k(\C)$-equivariant CSM and SSM classes of matrix Schubert varieties, namely Theorem \ref{thm:genWD}
and Corollary \ref{cor:ssmS}. These generating functions will depend on weakly decreasing integer sequences. In Section \ref{sec:GFpartition} these results will be improved
to generating sequences depending on partitions.



It is a remarkable fact of Schubert calculus, that the equivariant fundamental class of (the closure of) a matrix Schubert cells does not change when one attaches a 0 to the
end of the weakly decreasing integer sequence \cite{ss, km}. We will see below that the higher order terms of CSM and SSM classes {\em change} with this operation. Yet,
there is one version that will depend only on a partition (see Theorem \ref{thm:partition_gen_fv} below).

\subsection{Conventions on integer sequences.}
\label{sec:conventions}

Recall that the set
\[
\I_{k,n}=\{ I :  I=\{i_1<\ldots<i_d\}\subset \{1,\ldots, n\} , 0\leq d\leq k \}
\]
parameterizes the matrix Schubert cells of $\Hom(\C^k,\C^n)$. To an element $I\in \I_{k,n}$ we associate a weakly decreasing sequence $\lambda=(\lambda_1 \geq  \lambda_2
\geq \ldots \geq \lambda_k)$ of non-negative integers by the {\em conversion formula}
\[
\lambda_a=i_{k+1-a}-(k+1-a)\]
for $a=1,\ldots,k$, where by convention $i_{d+a}=n+a$ for $a=1,\ldots,k-d$.

For a sequence $\lambda=(\lambda_1 \geq  \lambda_2 \geq \ldots \geq \lambda_k)$ of weakly decreasing integers, let $I_\lambda=\{i_1< i_2<\ldots< i_k\}\subset \Z$ be defined
by the conversion formula (equivalent to the one above)
\[ i_a=\lambda_{k+1-a}+a. \]
We say that $\lambda=(\lambda_1 \geq  \lambda_2 \geq \ldots \geq \lambda_k)$ and the non-negative integer $n$ are {\em compatible} if the elements in $I_\lambda$ larger than
$n$ form an interval (possibly empty) starting at $n+1$. That is, if there exists a $q\geq 0$ such that $I_\lambda\cap \Z^{>n}=\{n+1,n+2,\ldots,n+q\}$.

It follows that map $I\mapsto \lambda$ described above is a bijection between $\I_{k,n}$ and
\[
\I'_{k,n}=\{ \lambda :  \lambda=(\lambda_1 \geq \ldots \geq \lambda_k)\in \N^k, \lambda \text{ and $n$ are compatible}\}.
\]
The inverse map $\I'_{k,n}\to \I_{k,n}$ is $\lambda \mapsto I_\lambda \cap \{1,\ldots,n\}$.
Observe that a given $\lambda$ is compatible with any sufficiently large $n$.

\begin{example} \rm
Let $k=2, n=3$ and consider the subsets $I\subset \{1,2,3\}$ as in Example \ref{ex:23}. The corresponding $\lambda$'s are $(0,0)$, $(1,0)$, $(1,1)$, $(2,0)$, $(2,1)$,
$(2,2)$, $(3,3)$, respectively.
\end{example}

\begin{example} \rm \label{ex:31}
The sequence $\lambda=(3,1)$ is compatible with $n$ if and only if $n\geq 4$ (since $I_\lambda=\{2,5\}$).
The element corresponding to $\lambda=(3,1)$ in $\I_{2,4}$ is $I=\{2\}$. For $n\geq 5$ the element corresponding to $\lambda=(3,1)$ in $\I_{2,n}$ is $I=\{2,5\}$.
\end{example}

\subsection{Generating functions for $GL_k(\C)$-equivariant CSM and SSM classes}

The $GL_k(\C)\times B^-_n$-equivariant CSM/SSM classes we study are elements of
\[
\C[\alpha_1,\ldots,\alpha_k,\beta_1,\ldots,\beta_n]^{S_k},\qquad \C[[\alpha_1,\ldots,\alpha_k,\beta_1,\ldots,\beta_n]]^{S_k}.
\]
By plugging in $\beta_i=0$ for all $i=1,\ldots,n$ we obtain symmetric polynomials (power series) in $\alpha_1,\ldots,\alpha_k$, hence linear combinations (formal infinite sums) of polynomials $\rho^{k,0}(\Sc_\lambda)$. The topological counterpart of this substitution is considering equivariant cohomology only with respect to the $GL_k(\C)$
factor of $GL_k(\C)\times B^-_n$.

Denote
\[
\csm_{\beta=0} (\Omega_I)=\csm(\Omega_I)|_{\beta_v=0,v=1,\ldots,n},
\qquad
\ssm_{\beta=0} (\Omega_I)=\ssm(\Omega_I)|_{\beta_v=0,v=1,\ldots,n}.
\]
Our next goal is to find expressions for the Schur function expansions of these functions.

\begin{proposition} \label{prop:csmRES}
For $\lambda \in \I'_{k,n}$ let $I=I_\lambda\cap \{1,\ldots,n\}$ be the corresponding element in $\I_{k,n}$. Let
\[
f_{\lambda,n}=
\prod_{i=1}^k z_i^{\lambda_i+k+1-i}
\prod_{i=1}^k (1+z_i)^{\max(0,n-k-1-\lambda_i+i)}
\mathop{\prod_{1\leq i<j \leq k}}_{\lambda_j-j\leq n-k-1} (1+z_i-z_j).
\]
We have
\[
\csm_{\beta=0}(\Omega_{I_\lambda \cap \{1,\ldots,n\}})=
(-1)^k
\RES_k
\left(
\frac{f_{\lambda,n}}
{\prod_{i=1}^k\prod_{u=1}^k (z_i+\alpha_u)} dz_1\ldots dz_k
\right)
\]
\end{proposition}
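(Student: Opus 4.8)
The plan is to derive this from Theorem~\ref{thm:csmW} together with the residue formula of Theorem~\ref{thm:Wres}, by tracking the conversion formula of Section~\ref{sec:conventions}. By Theorem~\ref{thm:csmW} we have $\csm(\Omega_I)=W_I(\aalpha,\bbeta)$, hence $\csm_{\beta=0}(\Omega_I)=W_{I,\beta=0}$, and Theorem~\ref{thm:Wres} already expresses $W_{I,\beta=0}$ as $(-1)^k\RES_k$ of $f_I/\prod_{u,v}(z_u+\alpha_v)\,dz_1\ldots dz_k$. So the entire content of the proposition is the identity $f_{\lambda,n}=f_I$ (as differential-form numerators, up to terms that do not affect the iterated residue at infinity), where $I=I_\lambda\cap\{1,\ldots,n\}$ has size $d$ and $r=k-d$. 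I would therefore open the proof by invoking these two theorems and reducing to the claim that the two numerators agree.

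The main computation is then purely bookkeeping on indices. First I would spell out, using compatibility of $\lambda$ with $n$, exactly which elements of $I_\lambda=\{i_1<\cdots<i_k\}$ (defined by $i_a=\lambda_{k+1-a}+a$) exceed $n$: these are $i_{d+1},\ldots,i_k$, and compatibility forces $i_{d+a}=n+a$ for $a=1,\ldots,r$, i.e. $\lambda_{k+1-(d+a)}=\lambda_{r+1-a}=n-d$ for those $a$; thus the first $r$ parts of $\lambda$ all equal $n-d$ and $\lambda_i\le n-d$ for all $i$. Then $I=\{i_1,\ldots,i_d\}$ with $i_u=\lambda_{k+1-u}+u$ for $u=1,\ldots,d$, equivalently (reindexing) the $i_u$ appearing in $f_I$. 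Next I would substitute $i_{k+1-a}=\lambda_a+k+1-a$ into each of the four product blocks of $f_I$ from Theorem~\ref{thm:Wres}: the block $\prod_{a=1}^r z_a^{n+r-a}$ must match $\prod_{i=1}^r z_i^{\lambda_i+k+1-i}$, which holds precisely because $\lambda_i=n-d$ and $k+1-i = r+d+1-i$ give $\lambda_i+k+1-i = n-d+r+d+1-i = n+r+1-i$ — here I should double-check the off-by-one against the stated exponent $n+r-a$, adjusting the indexing convention if needed; the block $\prod_{a=r+1}^k z_a^{i_{k+1-a}-1}=\prod_{a=r+1}^k z_a^{\lambda_a+k+1-a+1-1}=\prod_{a=r+1}^k z_a^{\lambda_a+k+1-a}$ matches the $i>d$ part of $\prod_{i=1}^k z_i^{\lambda_i+k+1-i}$; the block $\prod_{a=r+1}^k(1+z_a)^{n-i_{k+1-a}} = \prod_{a=r+1}^k (1+z_a)^{n-k-1-\lambda_a+a}$ matches $\prod_{i=1}^k(1+z_i)^{\max(0,n-k-1-\lambda_i+i)}$ — the $\max$ with $0$ being exactly what kills the $i\le r$ factors, since there $n-k-1-\lambda_i+i = n-k-1-(n-d)+i = d+i-k-1 = i-r-1 \le -1 \le 0$, so those factors are absent on both sides; finally the block $\prod_{a=r+1}^k\prod_{b=1}^{a-1}(1+z_b-z_a)$ must match $\prod_{1\le i<j\le k,\ \lambda_j-j\le n-k-1}(1+z_i-z_j)$, which again comes down to: the pair $(i,j)=(b,a)$ is included iff $\lambda_a-a\le n-k-1$ iff $a>r$ (using $\lambda_a\le n-d$ with equality exactly for $a\le r$), so the index set on the right is $\{(i,j): r+1\le j\le k,\ 1\le i<j\}$, matching the left after renaming $(b,a)\to(i,j)$. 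The Vandermonde block $\prod_{1\le b<a\le k}(z_a-z_b)$ in $f_I$ does not appear in $f_{\lambda,n}$; I would need to check whether the paper's $\RES$ convention absorbs it or whether $f_{\lambda,n}$ is implicitly the numerator after incorporating the $\prod(1-z_i/z_j)$-style factor — this discrepancy is the one place I would be careful, and I expect it is resolved by the remark in the proof of Lemma~\ref{lem:schur} that such factors are exactly the ones converting monomial residues into Schur functions, so either the statement intends $f_{\lambda,n}$ up to that Vandermonde or an earlier normalization handles it.

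The step I expect to be the genuine obstacle is precisely this matching of normalizations and off-by-one index conventions between the $f_I$ of Theorem~\ref{thm:Wres} (indexed by the ``backwards'' variables $\omega_a$, $a=1,\ldots,k$, with the first $r$ slots playing a distinguished role) and the $f_{\lambda,n}$ of the proposition (indexed forwards by $i=1,\ldots,k$ matching the parts $\lambda_i$). Everything else is a substitution. So the write-up would be: (i) cite Theorems~\ref{thm:csmW} and~\ref{thm:Wres}; (ii) unwind compatibility to pin down $i_{d+a}=n+a$ and $\lambda_1=\cdots=\lambda_r=n-d$, $\lambda_i\le n-d$; (iii) substitute $i_{k+1-a}=\lambda_a+k+1-a$ block by block and verify the four factors of $f_I$ become the three factors of $f_{\lambda,n}$ (the $\max(0,\cdot)$ and the condition $\lambda_j-j\le n-k-1$ being automatic consequences of (ii)); (iv) note that under the $\RES_k$ at infinity the extra Vandermonde/normalization factor produces the claimed form, citing the argument pattern of Lemma~\ref{lem:schur}. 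I would keep (iii) terse, presenting it as ``a direct substitution'' rather than grinding every exponent, and flag (iv) as the conventional point.
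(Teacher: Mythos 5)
Your approach is exactly the paper's: the printed proof consists of two sentences (cite Theorem~\ref{thm:csmW} to get $\csm_{\beta=0}(\Omega_I)=W_{I,\beta=0}$, then declare the proposition a reformulation of Theorem~\ref{thm:Wres} under the conversion formula of Section~\ref{sec:conventions}), so the entire content is the index bookkeeping you carry out. Your steps (i)--(iii) are right: compatibility does force $i_{d+a}=n+a$ and $\lambda_1=\cdots=\lambda_r=n-d$; the $\max(0,\cdot)$ vanishes exactly for $i\le r$ since there $n-k-1-\lambda_i+i=i-r-1<0$; and the condition $\lambda_j-j\le n-k-1$ is equivalent to $j\ge r+1$, so the three-term product matches. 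The two discrepancies you flag in step (iv) are genuine and are \emph{not} absorbed by any residue convention ($\RES_k$ is a bare iterated residue, with the $\prod(1-z_i/z_j)\prod dz_i/z_i$ factors written explicitly wherever they are meant): the statement of $f_{\lambda,n}$ as printed is simply misprinted. Tracking $i_{k+1-a}=\lambda_a+k+1-a$ through $f_I$ gives monomial exponent $\lambda_i+k-i$ in \emph{every} slot (note $i_{k+1-a}-1=\lambda_a+k-a$, not $\lambda_a+k+1-a$ --- a small arithmetic slip in your second block, which is in fact off by one exactly like your first), and the Vandermonde $\prod_{i<j}(z_j-z_i)$ must be kept; the correct numerator is $\prod_i z_i^{\lambda_i+k-i}\prod_i(1+z_i)^{\max(0,n-k-1-\lambda_i+i)}\prod_{i<j,\,\lambda_j-j\le n-k-1}(1+z_i-z_j)\prod_{i<j}(z_j-z_i)$, which is precisely $f_I$ and is also what Proposition~\ref{prop:RESS} requires for consistency with $\Fcsm_{\lambda,n}$ in Theorem~\ref{thm:genWD}, since $\prod_{i<j}(1-z_i/z_j)\cdot\prod_i z_i^{k}\cdot\prod_i z_i^{-1}=\prod_i z_i^{k-i}\prod_{i<j}(z_j-z_i)$. (A one-variable sanity check: for $k=1$, $n=2$, $I=\{1\}$, $\lambda=(0)$, the printed $f_{\lambda,n}=z_1(1+z_1)$ yields $\alpha_1^2-\alpha_1$, whereas $W_{\{1\},\beta=0}=1-\alpha_1=$ the residue with numerator $1+z_1$.) So your write-up is correct once you fix the slip and replace ``check whether the convention absorbs the Vandermonde'' by ``correct the printed statement''.
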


\begin{proof}
We have $\csm_{\beta=0}(\Omega_I)=W_{I,\beta=0}$ due to Theorem \ref{thm:csmW}. For the latter we have a residue description, Theorem \ref{thm:Wres}, which is reformulated
here for $\lambda$ instead of $I$.
\end{proof}

\begin{definition} Let $k,n\in \N$. For $\lambda=(\lambda_1\geq \lambda_2 \geq \ldots \geq \lambda_k)$, $\lambda_k\geq 0$ define
\[
\Fcsm_{\lambda,n}= \prod_{i=1}^k z_i^{\lambda_i} \cdot \prod_{i=1}^k (1+z_i)^{\max(0,n-k-1-\lambda_i+i)}  \cdot
\mathop{\prod_{1\leq i<j \leq k}}_{\lambda_j-j\leq n-k-1} (1+z_i-z_j),
\]
\[
\Fssm_{\lambda,n}=\frac{\Fcsm_{\lambda,n}}{\prod_{i=1}^k (1+z_i)^n}.
\]
\end{definition}

Observe that if $n$ is large (in fact $n\geq \lambda_1+k$ \footnote{since $\lambda_1+k=i_k$ in the language of Section \ref{sec:conventions} the condition is $i_k\leq n$})
then $\Fssm_{\lambda,n}$ does not depend on $n$. The stabilized value will be called
\begin{align}
\Fssm_{\lambda,\infty}&=
\prod_{i=1}^k z_i^{\lambda_i} \cdot \prod_{i=1}^k (1+z_i)^{-k-1-\lambda_i+i}  \cdot
\prod_{1\leq i<j \leq k}(1+z_i-z_j)
\label{eqn:Fssm1}
\\
&=\prod_{i=1}^k \left( \frac{z_i}{1+z_i} \right)^{\lambda_i} \prod_{j=1}^k \prod_{i=1}^j \frac{ 1+z_i-z_j}{1+z_i}
\label{eqn:Fssm2}.
\end{align}

\begin{example} We have
\[
\Fssm_{(1,0),0}=z_1, \qquad\qquad\qquad
\Fssm_{(1,0),1}=\frac{z_1(1+z_1-z_2)}{(1+z_1)(1+z_2)},
\]
\[
\Fssm_{(1,0),2}=\frac{z_1(1+z_1-z_2)}{(1+z_1)(1+z_2)^2}, \quad
\Fssm_{(1,0),\infty}=\Fssm_{(1,0),\geq 3}=\frac{z_1(1+z_1-z_2)}{(1+z_1)(1+z_2)^3}.
\]
\end{example}

The following theorem gives the generating sequences of $GL_k(\C)$-equivariant CSM and SSM classes of matrix Schubert cells in $\Hom(\C^k,\C^n)$.

\begin{theorem}\label{thm:genWD}
For $\lambda \in \I'_{k,n}$ let $I=I_\lambda\cap \{1,\ldots,n\}$ be the corresponding element in $\I_{k,n}$.
Then
\[
\csm_{\beta=0}(\Omega_{I}) = \SSS^{k,0}_{z_1,\ldots,z_k} (\Fcsm_{\lambda,n}),
\quad
\ssm_{\beta=0}(\Omega_{I}) = \SSS^{k,0}_{z_1,\ldots,z_k} (\Fssm_{\lambda,n}).
\]
\end{theorem}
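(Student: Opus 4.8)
\medskip
\noindent\emph{Plan of proof.} Both identities will come from matching two iterated-residue presentations. On one side, Proposition~\ref{prop:csmRES} writes $\csm_{\beta=0}(\Omega_I)=(-1)^k\RES_k\big(f_{\lambda,n}\cdot\prod_{i=1}^k\prod_{u=1}^k(z_i+\alpha_u)^{-1}\,dz_1\cdots dz_k\big)$. On the other, Proposition~\ref{prop:RESS} in the case with no $\beta$-variables (take \eqref{eqn:SchurRES2} and drop the factor $\prod_{u=1}^n(1+\beta_u/z_i)$) writes $\SSS^{k,0}_{z_1,\ldots,z_k}(p(\zz))=(-1)^k\RES_k\big(p(\zz)\prod_{i<j}(1-z_i/z_j)\prod_{i=1}^k\prod_{u=1}^k(1+\alpha_u/z_i)^{-1}\prod_{i=1}^k\tfrac{dz_i}{z_i}\big)$. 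So for the CSM statement it is enough to rewrite the first differential form as the second with $p=\Fcsm_{\lambda,n}$; the SSM statement will then follow by dividing by the total Chern class of the representation.

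\medskip
\noindent\emph{The CSM formula.} In the presentation of Proposition~\ref{prop:csmRES} substitute $\prod_{u=1}^k(z_i+\alpha_u)=z_i^{\,k}\prod_{u=1}^k(1+\alpha_u/z_i)$ and $dz_1\cdots dz_k=\big(\prod_{i=1}^k z_i\big)\prod_{i=1}^k\tfrac{dz_i}{z_i}$, so that the integrand becomes $\dfrac{f_{\lambda,n}}{\prod_{i=1}^k z_i^{\,k-1}}\cdot\prod_{i=1}^k\dfrac{1}{\prod_{u=1}^k(1+\alpha_u/z_i)}\cdot\prod_{i=1}^k\dfrac{dz_i}{z_i}$. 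It remains to verify the elementary identity $\dfrac{f_{\lambda,n}}{\prod_{i=1}^k z_i^{\,k-1}}=\Fcsm_{\lambda,n}\cdot\prod_{1\le i<j\le k}(1-z_i/z_j)$. Comparing the definitions of $f_{\lambda,n}$ and $\Fcsm_{\lambda,n}$, the $(1+z_i)$- and $(1+z_i-z_j)$-factors agree, and the remaining discrepancy is exactly the monomial $\prod_{i=1}^k z_i^{\,k-i}$ times the Vandermonde $\prod_{1\le i<j\le k}(z_j-z_i)$; since $\prod_{i<j}(1-z_i/z_j)=\prod_{i<j}(z_j-z_i)\big/\prod_j z_j^{\,j-1}$, we have $\prod_i z_i^{\,k-i}\prod_{i<j}(z_j-z_i)=\prod_i z_i^{\,k-1}\prod_{i<j}(1-z_i/z_j)$, which is the claimed identity. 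Applying Proposition~\ref{prop:RESS} with $p=\Fcsm_{\lambda,n}$ yields $\csm_{\beta=0}(\Omega_I)=\SSS^{k,0}_{z_1,\ldots,z_k}(\Fcsm_{\lambda,n})$.

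\medskip
\noindent\emph{The SSM formula.} Evaluating Corollary~\ref{cor:osztva} at $\beta=0$, where $\prod_{u=1}^k\prod_{v=1}^n(1+\beta_v-\alpha_u)$ becomes $\prod_{u=1}^k(1-\alpha_u)^n$, gives $\ssm_{\beta=0}(\Omega_I)=\csm_{\beta=0}(\Omega_I)\big/\prod_{u=1}^k(1-\alpha_u)^n=\SSS^{k,0}_{z_1,\ldots,z_k}(\Fcsm_{\lambda,n})\big/\prod_{u=1}^k(1-\alpha_u)^n$. Since $\Fssm_{\lambda,n}=\Fcsm_{\lambda,n}\big/\prod_{i=1}^k(1+z_i)^n$, the desired equality $\ssm_{\beta=0}(\Omega_I)=\SSS^{k,0}_{z_1,\ldots,z_k}(\Fssm_{\lambda,n})$ reduces to the general fact $\SSS^{k,0}_{z_1,\ldots,z_k}\big(q(\zz)\prod_{i=1}^k g(z_i)\big)=\SSS^{k,0}_{z_1,\ldots,z_k}(q(\zz))\cdot\prod_{u=1}^k g(-\alpha_u)$, valid for any $q(\zz)$ and any formal power series $g$ with $g(0)=1$, applied to $g(t)=(1+t)^{-n}$ (so that $g(-\alpha_u)=(1-\alpha_u)^{-n}$). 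I would prove this identity from the $c$-variable presentation \eqref{eqn:SchurRES1}: multiplying the input by $\prod_i g(z_i)$ amounts to the substitution $c_m\mapsto\sum_{l\ge0}g_l c_{m+l}$ applied to $\SSS_{z_1,\ldots,z_k}(q)$ (by multilinearity in the rows of the determinant defining $\Sc_\mu$), and one checks --- using $\rho^{k,0}(\Sc_\mu)=\det\big(\rho^{k,0}(c_{\mu_i+j-i})\big)$ together with $\sum_m\rho^{k,0}(c_m)t^m=\prod_{u=1}^k(1+\alpha_u t)^{-1}$ --- that $\rho^{k,0}$ converts this substitution into multiplication by $\prod_{u=1}^k g(-\alpha_u)$.

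\medskip
The CSM half is a routine rearrangement of differential forms; the one place that needs genuine care --- and where I expect the only real, if modest, obstacle --- is this last step, namely keeping the two formal-power-series conventions (for $\Fssm_{\lambda,n}$ as a series in the $z_i$, and for $\prod_u(1-\alpha_u)^{-n}$ as a series in the $\alpha_u$) synchronized throughout the iterated residue.
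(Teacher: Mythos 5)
Your proposal is correct and follows the same route as the paper, whose proof of this theorem consists only of citing Proposition~\ref{prop:csmRES} and Proposition~\ref{prop:RESS} for the CSM identity and deriving the SSM identity from it; your factorization lemma $\SSS^{k,0}_{\zz}\bigl(q(\zz)\prod_i g(z_i)\bigr)=\SSS^{k,0}_{\zz}(q)\cdot\prod_u g(-\alpha_u)$ for $g(0)=1$ is a correct and adequate way to make the unspoken second step precise (it is perhaps most quickly seen from the finite-residue computation, where $\prod_i g(z_i)$ evaluates to $\prod_u g(-\alpha_u)$ at every pole $z_i=-\alpha_{\sigma(i)}$). One remark: your claim that the discrepancy between $f_{\lambda,n}$ and $\Fcsm_{\lambda,n}$ is $\prod_i z_i^{k-i}\prod_{i<j}(z_j-z_i)$ does not match the formula for $f_{\lambda,n}$ displayed in Proposition~\ref{prop:csmRES} (which has monomial $\prod_i z_i^{\lambda_i+k+1-i}$ and no Vandermonde factor), but it does match $f_I$ of Theorem~\ref{thm:Wres}, of which $f_{\lambda,n}$ is asserted to be a mere reindexing --- the displayed $f_{\lambda,n}$ appears to contain a typo, and your version is the one consistent with both Theorem~\ref{thm:Wres} and the stated conclusion, so your argument stands.
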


\begin{proof}
The first statement follows from Proposition \ref{prop:csmRES} and Proposition \ref{prop:RESS}. The second statement follows from the first one.
\end{proof}

\begin{example} \rm \label{ex:31v}
Let $k=2$, $\lambda=(3,1)$. Then $I_\lambda=\{2,5\}$. Hence $\lambda$ is compatible with $n$ iff $n\geq 4$. The corresponding subset in $\I_{2,4}$ is $I=\{2\}$, and for
$n\geq 5$ the corresponding subset in $\I_{2,n}$ is $\{2,5\}$. Calculating Taylor series of the appropriate explicit rational functions we obtain that for $n=4$ we have
\begin{align*}
\csm_{\beta=0}(\Omega_{ \{2\} }) &= \SSS^{2,0}_{z_1,z_2}
\left(z_1^3z_2+ ( z_1^4z_2+z_1^3z_2^2)+ (2 z_1^4z_2^2-z_1^3z_2^3)+
(z_1^4z_2^3-z_1^3z_2^4)\right) \\
\ &=
\rho^{2,0}\left(
\Sc_{3,1}+(\Sc_{4,1}+\Sc_{3,2})+(2\Sc_{4,2}-\Sc_{3,3})+\Sc_{4,3}\right).
\\
\ssm_{\beta=0}(\Omega_{ \{2\} }) & = \SSS^{2,0}_{z_1,z_2}
(z_1^3z_2 - (3 z_1^4z_2+3z_1^3z_2^2
)+ (6z_1^5z_2+10 z_1^4z_2^2+5z_1^3z_2^3)
\\
\ & \qquad\qquad\qquad\qquad  - (10z_1^6z_2+22z_1^5z_2^2+17z_1^4z_2^3+7z_1^3z_2^4)+\ldots)
\\
\ &= \rho^{2,0} \left(
\Sc_{3,1}-(3\Sc_{4,1}+3\Sc_{3,2})+(6\Sc_{5,1}+10\Sc_{4,2}+5\Sc_{3,3})-
(10\Sc_{6,1}+22\Sc_{5,2}+17\Sc_{4,3})+\ldots\right).
\end{align*}
For 
$n\geq 5$ we have
\begin{align*}
\csm_{\beta=0}(\Omega_{ \{2,5\} }) & = \SSS^{2,0}_{z_1,z_2}
\left(z_1^3z_2+ ( z_1^4z_2+2z_1^3z_2^2)+ (3 z_1^4z_2^2)+
(3z_1^4z_2^3-2z_1^3z_2^4)+(z_1^4z_2^4-z_1^3z_2^5)
\right)
\\
&= \rho^{2,0}\left(
\Sc_{3,1}+(\Sc_{4,1}+2\Sc_{3,2})+(3\Sc_{4,2})+3\Sc_{4,3}+2\Sc_{4,4}\right).
\\
\ssm_{\beta=0}(\Omega_{ \{2,5\} }) & = \SSS^{2,0}_{z_1,z_2}
(z_1^3z_2-(4 z_1^4z_2+3z_1^3z_2^2)+ (13 z_1^4z_2^2+5z_1^3z_2^3+10z_1^5z_2)
\\
& \qquad\qquad\qquad\qquad - (20z_1^6z_2+35z_1^5z_2^2+22z_1^4z_2^3+7z_1^3z_2^4)+\ldots)
\\
&= \rho^{2,0}\left(
\Sc_{3,1}-(4\Sc_{4,1}+3\Sc_{3,2})+(13\Sc_{4,2}+5\Sc_{3,3}+10\Sc_{5,1})-(20\Sc_{6,1}
+35\Sc_{5,2}+22\Sc_{4,3})+\ldots\right).
\end{align*}
\end{example}

For $\lambda=(\lambda_1\geq \ldots \geq \lambda_k)$ assume that $n$ is large enough to ensure $\lambda_1\leq n-k$. Then the set in $\I_{k,n}$ corresponding to $\lambda$ is
$I_\lambda$. Also, the elements of the matrix Schubert cell $\Omega_I \subset \Hom(\C^k,\C^n)$ have full rank (i.e. rank $k$).

\begin{corollary}\label{cor:ssmS}
If $\lambda_1\leq n-k$ then
\begin{equation}\label{eqn:majdnem}
\ssm_{\beta=0}(\Omega_{I_\lambda}) = \SSS^{k,0}_{z_1,\ldots,z_k} (\Fssm_{\lambda,\infty}).
\end{equation} \qed
\end{corollary}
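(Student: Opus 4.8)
The plan is to read the statement off Theorem~\ref{thm:genWD} once we know that the hypothesis $\lambda_1\le n-k$ forces two things: that $\lambda$ lies in $\I'_{k,n}$ with associated set exactly $I_\lambda$, and that the truncated generating function $\Fssm_{\lambda,n}$ has already stabilized to $\Fssm_{\lambda,\infty}$.

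First I would check the combinatorics of Section~\ref{sec:conventions}. In the conversion formula the largest element of $I_\lambda$ is $i_k=\lambda_1+k$, so $\lambda_1\le n-k$ gives $I_\lambda\subseteq\{1,\dots,n\}$; hence $I_\lambda\cap\Z^{>n}=\emptyset$ is the (empty, $q=0$) interval required by the compatibility definition, so $\lambda\in\I'_{k,n}$, and moreover $I_\lambda\cap\{1,\dots,n\}=I_\lambda$. Thus Theorem~\ref{thm:genWD} applies with $I=I_\lambda$ and gives $\ssm_{\beta=0}(\Omega_{I_\lambda})=\SSS^{k,0}_{z_1,\dots,z_k}(\Fssm_{\lambda,n})$.

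Next I would verify $\Fssm_{\lambda,n}=\Fssm_{\lambda,\infty}$ under the same hypothesis; this is essentially the only computation the proof contains. Two truncations occur in the definition of $\Fcsm_{\lambda,n}$. For the exponent $\max(0,\,n-k-1-\lambda_i+i)$: since $\lambda_i\le\lambda_1\le n-k$ we have $n-k-1-\lambda_i+i\ge i-1\ge 0$, so the maximum is attained by its second argument for every $i$. For the restriction $\lambda_j-j\le n-k-1$ on the product over pairs: since $\lambda_j\le\lambda_1\le n-k$ and $j\ge 1$ we get $\lambda_j-j\le (n-k)-j\le n-k-1$, so every pair $1\le i<j\le k$ is included. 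Substituting these simplifications into $\Fcsm_{\lambda,n}$ and dividing by $\prod_{i=1}^k(1+z_i)^n$ to form $\Fssm_{\lambda,n}$ reproduces exactly formula~(\ref{eqn:Fssm1}) for $\Fssm_{\lambda,\infty}$, which carries no dependence on $n$.

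Finally, applying $\SSS^{k,0}_{z_1,\dots,z_k}$ to the identity $\Fssm_{\lambda,n}=\Fssm_{\lambda,\infty}$ and combining with the instance of Theorem~\ref{thm:genWD} above yields~(\ref{eqn:majdnem}); the operation is legitimate because the exponents $-k-1-\lambda_i+i$ appearing in~(\ref{eqn:Fssm1}) are negative, so $\Fssm_{\lambda,\infty}$ is a rational function of the admissible shape $p(\zz)/\prod_i(1+z_i)^{l_i}$ with $l_i>0$, to which the conventions of Section~\ref{sec:sym} apply. I expect no real obstacle beyond these inequalities: the analytic substance already sits in Theorem~\ref{thm:genWD} (hence in Theorems~\ref{thm:csmW} and~\ref{thm:Wres}), and the one thing to be careful with is that the two combinatorial truncations in $\Fcsm_{\lambda,n}$ genuinely disappear once $\lambda_1\le n-k$, leaving the honest $n$-free generating function.
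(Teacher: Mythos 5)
Your proposal is correct and follows exactly the route the paper intends: the corollary is stated with a bare \qed precisely because it is Theorem~\ref{thm:genWD} combined with the observation (made just before the definition of $\Fssm_{\lambda,\infty}$, with the footnote $i_k=\lambda_1+k\le n$) that the two truncations in $\Fcsm_{\lambda,n}$ become vacuous once $\lambda_1\le n-k$, so that $\Fssm_{\lambda,n}=\Fssm_{\lambda,\infty}$. Your verification of the inequalities and of $I_\lambda\cap\{1,\dots,n\}=I_\lambda$ is exactly the bookkeeping the paper leaves to the reader.
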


The essence of Corollary \ref{cor:ssmS} is that given a weakly decreasing sequence of non-negative integers $\lambda$, there is a formula (namely the right hand side of
(\ref{eqn:majdnem})) which expresses the $GL_k(\C)$-equivariant SSM class of $\Omega_{I_\lambda} \subset \Hom(\C^k,\C^n)$ {\em for all sufficiently large $n$}.

Unfortunately the expression given in Corollary \ref{cor:ssmS} {\em does} depend on $k$, that is, it changes if we add a 0 to the end of $\lambda$. This will be improved in
Section \ref{sec:GFpartition}.


\begin{remark}
Corollary \ref{cor:ssmS} shows the stabilzation of SSM classes when $n\geq k+\lambda_1$. There is another type of stabilization of CSM classes in the $n\geq k+\lambda_1$ range. Namely, in this case 
\[
\Fcsm_{\lambda,n}= \prod_{i=1}^k z_i^{\lambda_i} \cdot \prod_{i=1}^k (1+z_i)^{n-k-1-\lambda_i+i}  \cdot
\prod_{1\leq i<j \leq k} (1+z_i-z_j).
\]
Hence, if $\lambda=(\lambda_1,\ldots,\lambda_k)$ is changed by adding 1 to each component, and $n$ is increased by 1, then $\Fcsm_{\lambda,n}$ gets multiplied by $z_1\cdots z_k$. This means, that---in the $n\geq k+\lambda_1$ range---increasing $n$ by 1, and increasing $\lambda$ by $(1^k)=(1,\ldots,1)$ changes the $GL_k(\C)$-equivariant CSM class of $\Omega_{I_\lambda}$ in a controlled way: in  the Schur expansion the partition of each Schur polynomial is increased by $(1^k)=(1,\ldots,1)$. For example for $k=2$, $\lambda=(3,1)$ we get
\begin{eqnarray*}
\csm_{\beta=0}\left(\Omega_{{\{2,5\}}}\subset \Hom(2,5)\right)
& = &
\rho^{2,0}\left( \Sc_{3,1}+2\Sc_{3,2}+\Sc_{4,1}+3\Sc_{4,2}+3\Sc_{4,3}+2\Sc_{4,4} \right), 
\\
\csm_{\beta=0}\left(\Omega_{{\{3,6\}}}\subset \Hom(2,6)\right)
& = &
\rho^{2,0}\left(\Sc_{4,2}+2\Sc_{4,3}+\Sc_{5,2}+3\Sc_{5,3}+3\Sc_{5,4}+2\Sc_{5,5}\right), 
\\
\csm_{\beta=0}\left(\Omega_{{\{4,7\}}}\subset \Hom(2,7)\right)
& = &
\rho^{2,0}\left(\Sc_{5,3}+2\Sc_{5,4}+\Sc_{6,3}+3\Sc_{6,4}+3\Sc_{6,5}+2\Sc_{6,6}\right),
\end{eqnarray*}
and so on, c.f. `lowering' and `raising' operators in \cite{structure}.
\end{remark}

\section{Generating functions parameterized by partitions} \label{sec:GFpartition}

Corollary \ref{cor:ssmS} claims that $\ssm_{\beta=0}(\Omega_{I_\lambda})$ is obtained applying the substitution $\rho^{k,0}$ to the generating sequence
$\SSS_{z_1,\ldots,z_k} (\Fssm_{\lambda,\infty})$. However this generating sequence changes by adding a 0 to the end of $\lambda$. In this section we present a generating
function independent of such change. This new generating functions depends on infinitely many variables. First, in Section \ref{sec:incrgenfn} we deal with the algebra of
generating functions with infinitely many variables.

\subsection{Increasing the number of variables in generating function}\label{sec:incrgenfn}

Let $h(z_1,\ldots,z_{k+1})$ be a power series in $k+1$ variables. We have that $\SSS_{z_1,\ldots,z_k,z_{k+1}}(h)$ is an infinite linear combination of Schur functions. Some
of the terms correspond to partitions of length at most $k$---call the sum of these terms $\SSS^{\leq k}_{z_1,\ldots,z_{k+1}}(h)$---and the rest corresponds to partitions of
length $k+1$.

\begin{lemma} \label{lem:stab}
Let $f(z_1,\ldots,z_k)$ and $g(z_1,\ldots,z_k,z_{k+1})$ be formal power series such that $g(z_1,$ $\ldots,$ $z_k,0)=1$. Then
\[
\SSS^{\leq k}_{z_1,\ldots,z_{k+1}}(fg) = \SSS_{z_1,\ldots,z_k}(f).
\]
\end{lemma}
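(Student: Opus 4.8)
The plan is to reduce the statement to a monomial computation and to control, for each monomial, the length of the partition produced after applying $\SSS$ and the straightening rules (\ref{eqn:str}). First I would use the hypothesis: since $g(z_1,\ldots,z_k,0)=1$, all monomials of $g$ of $z_{k+1}$-degree $0$ cancel except the constant $1$, so $g=1+z_{k+1}\,h$ for a formal power series $h=h(z_1,\ldots,z_{k+1})$. Then $fg=f+z_{k+1}fh$, and by linearity of $\SSS_{z_1,\ldots,z_{k+1}}$ it suffices to treat the two summands separately.

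The summand $f$ involves only $z_1,\ldots,z_k$, so each of its monomials has the form $z_1^{a_1}\cdots z_k^{a_k}=z_1^{a_1}\cdots z_k^{a_k}z_{k+1}^{0}$, which $\SSS_{z_1,\ldots,z_{k+1}}$ sends to $\Sc_{a_1,\ldots,a_k,0}=\Sc_{a_1,\ldots,a_k}$ (using $\Sc_\lambda=\Sc_{\lambda,0}$). Hence $\SSS_{z_1,\ldots,z_{k+1}}(f)=\SSS_{z_1,\ldots,z_k}(f)$, which is an infinite combination of Schur functions of partitions of length at most $k$. For the summand $z_{k+1}fh$, since $f$ and $h$ are power series each of its monomials is $z_1^{a_1}\cdots z_k^{a_k}z_{k+1}^{b}$ with all $a_i\ge 0$ and $b\ge 1$, and is sent by $\SSS_{z_1,\ldots,z_{k+1}}$ to $\Sc_{a_1,\ldots,a_k,b}$. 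The key claim is that for such an integer vector the straightened form is either $0$ or $\pm\Sc_\mu$ for a partition $\mu$ of length exactly $k+1$; granting this, $\SSS_{z_1,\ldots,z_{k+1}}(z_{k+1}fh)$ contributes nothing to $\SSS^{\le k}_{z_1,\ldots,z_{k+1}}$, and combining the two summands gives $\SSS^{\le k}_{z_1,\ldots,z_{k+1}}(fg)=\SSS_{z_1,\ldots,z_k}(f)$.

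The key claim is the only real content, and I expect it to be the main obstacle to make precise. I would establish it through the standard reformulation of the straightening rules: setting $\beta_i=\lambda_i-i$, the rule $\Sc_{\ldots,a,b,\ldots}=-\Sc_{\ldots,b-1,a+1,\ldots}$ is exactly an adjacent transposition of the $\beta_i$ (up to sign), and $\Sc_\lambda=0$ precisely when two of the $\beta_i$ coincide; so whenever the $\beta_i$ are distinct one has $\Sc_\lambda=\pm\Sc_\mu$ with $\mu_i=\beta^{\downarrow}_i+i$, where $\beta^{\downarrow}$ is the strictly decreasing rearrangement of $(\beta_1,\ldots,\beta_{k+1})$. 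For $\lambda=(a_1,\ldots,a_k,b)$ with $a_i\ge 0$ and $b\ge 1$ one computes $\beta_i=a_i-i\ge -i\ge -k$ for $i\le k$ and $\beta_{k+1}=b-(k+1)\ge -k$, hence $\min_i\beta_i\ge -k$ and therefore $\mu_{k+1}=\big(\min_i\beta_i\big)+(k+1)\ge 1$, so $\mu$ has $k+1$ nonzero parts. This proves the claim; the rest is bookkeeping. An equivalent route is to observe that in the bialternant description, with $\delta=(k,k-1,\ldots,0)$ the exponent vector $\lambda+\delta=(a_1+k,\ldots,a_k+1,b)$ has all entries $\ge 1$, so its decreasing sort still has smallest entry $\ge 1$, forcing the last part of $\mu$ to be $\ge 1$; alternatively one could argue directly from the residue formula of Lemma \ref{lem:schur} and Proposition \ref{prop:RESS}, but the monomial argument seems cleanest.
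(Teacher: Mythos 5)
Your proof is correct and follows essentially the same route as the paper's: both reduce to monomials and show, by tracking the statistics $\lambda_i-i$ under the straightening laws, that any monomial with $z_{k+1}$-exponent $b\geq 1$ straightens to $0$ or $\pm\Sc_\mu$ with $\mu_{k+1}\geq 1$, so only the constant term of $g$ (forced by $g(z_1,\ldots,z_k,0)=1$) contributes to partitions of length at most $k$. Your preliminary decomposition $g=1+z_{k+1}h$ is just a clean repackaging of the paper's observation that the only monomial of $g$ with $z_{k+1}$-exponent $0$ is $1$.
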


\begin{proof}
Let $\prod_{i=1}^k z_i^{a_i}$ and $\prod_{i=1}^{k+1} z_i^{b_i}$ be monomials that occur in $f$ and $g$, respectively,  with non-zero coefficients. Their product
$T=\prod_{i=1}^{k} z_i^{a_i+b_i} \cdot z_{k+1}^{b_{k+1}}$ occurs in $fg$ with non-zero coefficient. From (\ref{eqn:str}) we have that $\SSS_{z_1,\ldots,z_{k+1}}(T)$ is
either 0 or equal to $\pm\SSS_{z_1,\ldots,z_{k+1}} (\prod_{i=1}^{k+1} z_i^{\mu_i})$ where $\mu$ is a partition, and $\mu_{k+1}$ is equal to one of
\[
b_{k+1}, a_k+b_k+1, a_{k-1}+b_{k-1}+2, a_{k-2}+b_{k-2}+3, \ldots, a_1+b_1+k.
\]
Hence, $\SSS^{\leq k}_{z_1,\ldots,z_{k+1}}(T)$ is non-zero, iff $\mu_{k+1}=0$. The listed integers are all necessarily positive except the first one. Hence $\mu_{k+1}=0$ can
only occur if $\mu_{k+1}=b_{k+1}=0$. However, the $g(z_1,$ $\ldots,$ $z_k,0)=1$ condition then implies that the only monomial in $g$ with $b_{k+1}=0$ is the monomial 1. That
is, we have that $b_i=0$ for $i=1,\ldots,k+1$.

We obtained that the only way of obtaining a partition of length at most $k$ in $\SSS^{\leq k}_{z_1,\ldots,z_{k+1}}(fg)$ is by using the constant term 1 of $g$. This proves
the lemma.
\end{proof}

For a function $f(z_1,\ldots,z_k)$ and $N\in \N$ consider
\[
H_N=\SSS_{z_1,\ldots,z_N} \left( f(z_1,\ldots,z_k)
\prod_{j=1}^N \prod_{i=1}^j
\frac{ 1+z_i-z_j}{1+z_i}    \right).
\]
Observe that
\[
H_N=H_{N-1} \cdot \prod_{i=1}^N \frac{1+z_i-z_N}{1+z_i},
\]
and that the factor $\prod_{i=1}^N (1+z_i-z_N)/(1+z_i)$ takes the value 1 if we substitute $z_N=0$. Hence, we can apply Lemma \ref{lem:stab} for $k+1,k+2,\ldots$ and obtain
that the coefficient of $\Sc_\mu$ for any concrete partition $\mu$ stabilizes in $H_N$ as $N\to \infty$. The sum of the stable terms will be denoted by
\[
\SSS_{z_1,z_2,\ldots} \left( f(z_1,\ldots,z_k)
\prod_{j=1}^\infty \prod_{i=1}^j
\frac{ 1+z_i-z_j}{1+z_i} \right).
\]

\subsection{The $\tssm_\lambda$ function}

Recall that a partition is an equivalence class of sequences of weakly decreasing non-negative integers with respect to the equivalence relation generated by
$(\lambda_1,\ldots,\lambda_k)\sim (\lambda_1,\ldots,\lambda_k,0)$. As usual, we will use a representative to denote a partition. We are ready to make a key definition of the
paper.

\begin{definition}
Denote
\[
\tssm_\lambda:=\SSS_{z_1,z_2,\ldots} \left(
\prod_{i=1}^k \left( \frac{z_i}{1+z_i} \right)^{\lambda_i}
\prod_{j=1}^\infty \prod_{i=1}^j
\frac{ 1+z_i-z_j}{1+z_i}
\right).
\]
\end{definition}

\begin{example} \rm Some examples are given in the Introduction. Another one is
\begin{align*}
\tssm_{31}=&
\Sc_{31}-(4\Sc_{41}+3\Sc_{32}+3\Sc_{311})+(10\Sc_{51}+13\Sc_{42}+5\Sc_{33}+10\Sc_{321}+6\Sc_{3111}+13\Sc_{411})
\\
& -(20\Sc_{61}+35\Sc_{52}+22\Sc_{43}+35\Sc_{511}+46\Sc_{421}+19\Sc_{331}+10\Sc_{322}+28\Sc_{4111}+22\Sc_{3211})+
\ldots.
\end{align*}
Observe how the partitions that occur in the subscripts grow: Not only the components are larger and larger numbers but the {\em lengths} of the partitions are growing as
well. In the usual picture of Young diagrams the shapes not only ``grow to the right'' but also ``grow downwards''. To our best knowledge this phenomenon is new in algebraic
combinatorics; it does not occur in analogous situations in the theory of equivariant fundamental classes essentially due to \cite[Theorem~2.1]{ts}.
\end{example}

The following conjecture is verified in several special cases.

\begin{conjecture} \label{conj:positivity}
For every partition $\lambda$ the signs in the Schur expansions of $\tssm_{\lambda}$ alternate with the degree. Namely, for a partition $\mu$, $(-1)^{|\mu|-|\lambda|}$ times
the coefficient of $\Sc_\mu$ in $\tssm_{\lambda}$ is non-negative.
\end{conjecture}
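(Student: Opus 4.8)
The plan is to work directly with the residue/$\SSS$ description of $\tssm_\lambda$ and to track signs monomial-by-monomial through the straightening process. Recall that
\[
\tssm_\lambda=\SSS_{z_1,z_2,\ldots}\left(\prod_{i=1}^k\Bigl(\frac{z_i}{1+z_i}\Bigr)^{\lambda_i}\prod_{j=1}^\infty\prod_{i=1}^j\frac{1+z_i-z_j}{1+z_i}\right).
\]
The first step would be to expand each rational factor as a power series in the $z_i$. The factor $\bigl(z_i/(1+z_i)\bigr)^{\lambda_i}=z_i^{\lambda_i}\sum_{m\geq 0}\binom{-\lambda_i}{m}z_i^m$ contributes a sign $(-1)^m$ to the monomial $z_i^{\lambda_i+m}$; the factor $1/(1+z_i)$ in the product contributes $\sum_{m\geq 0}(-1)^m z_i^m$; and the numerators $1+z_i-z_j$ contribute mixed signs. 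I would first normalize so that every monomial $\prod_i z_i^{a_i}$ appearing in the expansion comes with a sign that is $(-1)^{(\sum_i a_i)-|\lambda|}$ times a \emph{non-negative} integer. This is the heart of a ``monomial positivity'' statement and is where most of the work lies: one must show that, after the cancellations forced by $\prod_{1\le i<j}(1+z_i-z_j)$ against the denominators, the surviving coefficient of each monomial of total degree $d$ carries the sign $(-1)^{d-|\lambda|}$. A clean way to see this is to rewrite $\prod_{j}\prod_{i\le j}\frac{1+z_i-z_j}{1+z_i}=\prod_{i<j}\frac{1+z_i-z_j}{1+z_i}\cdot\prod_i\frac{1}{1+z_i}$ and then to recognize $\frac{1+z_i-z_j}{1+z_i}=1-\frac{z_j}{1+z_i}$, which already has the alternating-sign form once one substitutes $\frac{1}{1+z_i}=\sum(-1)^m z_i^m$; the product of finitely many such ``$1-(\text{positive series})$'' factors need not be sign-alternating, so one is forced to combine them with the $z_i^{\lambda_i}/(1+z_i)^{\lambda_i}$ factors and argue more carefully, perhaps by a generating-function identity or an explicit combinatorial model (lattice paths / nonintersecting paths) for the coefficients.

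The second step is to pass from monomial positivity to Schur positivity via the $\SSS$ operation, controlling the straightening laws \eqref{eqn:str}. The relation $\Sc_{I,a,b,J}=-\Sc_{I,b-1,a+1,J}$ is degree-preserving and sign-reversing, while $\Sc_{I,a,a+1,J}=0$. Thus if one starts from a monomial $\prod z_i^{a_i}$ with attached sign $(-1)^{(\sum a_i)-|\lambda|}$ and straightens the exponent vector to a partition $\mu$ (necessarily with $|\mu|=\sum a_i$), each straightening move multiplies by $-1$, but the number of moves is not intrinsic. The key point is that the final sign of the resulting $\pm\Sc_\mu$ depends only on the parity of the permutation sorting $(a_1+ \text{shift})$, and one must check that this does not spoil the uniform sign $(-1)^{|\mu|-|\lambda|}$. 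Here I would use Lemma \ref{lem:schur}: the residue formula writes $\rho^{k,0}(\Sc_\lambda)$ (and hence $\SSS$) with the Vandermonde-type factor $\prod_{i<j}(1-z_i/z_j)$, whose sign bookkeeping is exactly what converts ``monomial with sign $(-1)^{d-|\lambda|}$'' into ``$\Sc_\mu$ with coefficient of sign $(-1)^{|\mu|-|\lambda|}$''. Concretely, I expect the cleanest route is: show $(-1)^{|\lambda|}\tssm_\lambda$, written in the variables $w_i=-z_i$ (or with $z_i\mapsto z_i/(1-z_i)$), becomes a manifestly Schur-positive symmetric function, perhaps a skew Schur function or a product of a Schur function with a totally positive kernel such as $\prod(1-z_iz_j)^{-1}$ or $\prod(1+z_i)^{-1}$.

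The third step would be to handle the stabilization ($N\to\infty$) rigorously: Lemma \ref{lem:stab} already guarantees that the coefficient of each fixed $\Sc_\mu$ stabilizes, so it suffices to prove the sign statement for the finite truncations $H_N$ and pass to the limit. This reduces the conjecture to a statement about $\SSS_{z_1,\ldots,z_N}$ applied to an explicit polynomial, which is a finite (if large) computation in principle and, more importantly, a finite-dimensional symmetric-function identity amenable to the machinery above.

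The main obstacle, as indicated, is the first step: the product $\prod_{i}\bigl(z_i/(1+z_i)\bigr)^{\lambda_i}\prod_{i<j}(1+z_i-z_j)$ is \emph{not} obviously sign-alternating on the nose, and showing that the surviving coefficients after the $\SSS$-straightening are uniformly signed requires either a slick algebraic reformulation (identifying $(-1)^{|\lambda|}\tssm_\lambda$ with a known positive object) or an honest combinatorial model for its Schur coefficients. I would bet on the algebraic reformulation: the factor $\prod_{i\le j}\frac{1+z_i-z_j}{1+z_i}$ strongly resembles the kernel appearing in Cauchy-type identities, and matching it with a dual-Cauchy expansion should express $(-1)^{|\lambda|}\tssm_\lambda$ as a nonnegative combination of Schur functions, perhaps even giving a closed product or skew-shape formula. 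Failing that, the positivity of individual coefficients of $\tssm_\lambda$ in low degree (as in the examples) can be leveraged into an inductive argument on $|\mu|-|\lambda|$ using the recursion $H_N=H_{N-1}\cdot\prod_{i=1}^N\frac{1+z_i-z_N}{1+z_i}$, though controlling signs through repeated straightening in that recursion is delicate.
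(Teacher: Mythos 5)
This statement is Conjecture \ref{conj:positivity}; the paper does not prove it, and explicitly says only that it has been ``verified in several special cases.'' So there is no proof in the paper to compare against, and what you have written is a research plan rather than a proof: by your own admission the argument is not closed. That said, it is worth being precise about where the real gap sits, because you have located it in the wrong step. Your ``first step'' (monomial-level sign alternation) is in fact essentially immediate: writing the kernel as $\prod_{i<j}\bigl(1-\tfrac{z_j}{1+z_i}\bigr)\cdot\prod_j\tfrac{1}{1+z_j}$, each factor $1-\tfrac{z_j}{1+z_i}=1-z_j+z_iz_j-z_i^2z_j+\cdots$, each factor $\tfrac{1}{1+z_j}$, and each factor $\bigl(\tfrac{z_i}{1+z_i}\bigr)^{\lambda_i}=\sum_m(-1)^m\binom{\lambda_i+m-1}{m}z_i^{\lambda_i+m}$ has every coefficient of sign $(-1)^{d}$ relative to its bottom degree, and this property is closed under multiplication. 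So every monomial of total degree $d$ in the expansion carries sign $(-1)^{d-|\lambda|}$; no lattice-path model is needed for that.

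The genuine obstruction is your second step, which you pass over too quickly. The $\SSS$ operation sends a monomial $\prod z_i^{a_i}$ to a possibly fake Schur function, and straightening via \eqref{eqn:str} multiplies by $-1$ per move; the net sign attached to the resulting $\Sc_\mu$ is the sign of the sorting permutation, which varies from monomial to monomial. The paper's own example \eqref{eqn:Sex} is exactly a counterexample to the transfer you need: $\SSS_{z_1,z_2}\bigl(z_1^3z_2\sum_i z_2^i\bigr)$ has all monomial coefficients equal to $+1$, yet its Schur expansion has mixed signs. Hence monomial sign alternation does \emph{not} imply Schur sign alternation, and your appeal to the Vandermonde factor $\prod_{i<j}(1-z_i/z_j)$ in Lemma \ref{lem:schur} as ``exactly the bookkeeping that converts'' one into the other is an assertion, not an argument. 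The proposed rescue --- identifying $(-1)^{|\lambda|}\tssm_\lambda$ with a Cauchy-type or skew-Schur positive object --- is speculative and not carried out. Until some mechanism is exhibited that controls the signs produced by straightening (or a genuinely different expansion is found), the conjecture remains open, and your proposal does not establish it.
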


\begin{theorem} \label{thm:partition_gen_fv}
  Let $\lambda=(\lambda_1 \geq \ldots \geq \lambda_k)$ and $n\geq \lambda_1+k$. Consider $\Omega_{I_\lambda}\subset \Hom(\C^k,\C^n)$.
  We have
  \[
  \ssm_{\beta=0}(\Omega_{I_{\lambda}})
  =
  \rho^{k,0}(\tssm_\lambda).
  \]
 %
 \end{theorem}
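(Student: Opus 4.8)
The plan is to connect the finite-$k$ generating function $\Fssm_{\lambda,\infty}$ from Corollary \ref{cor:ssmS} with the infinite-variable generating function $\tssm_\lambda$, and show that after applying $\rho^{k,0}$ the two coincide. First I would recall that, for $n \geq \lambda_1 + k$, Corollary \ref{cor:ssmS} gives
\[
\ssm_{\beta=0}(\Omega_{I_\lambda}) = \SSS^{k,0}_{z_1,\ldots,z_k}(\Fssm_{\lambda,\infty})
= \rho^{k,0}\left( \SSS_{z_1,\ldots,z_k}(\Fssm_{\lambda,\infty}) \right),
\]
and by the product form \eqref{eqn:Fssm2},
\[
\Fssm_{\lambda,\infty} = \prod_{i=1}^k \left( \frac{z_i}{1+z_i} \right)^{\lambda_i} \prod_{j=1}^k \prod_{i=1}^j \frac{1+z_i-z_j}{1+z_i}.
\]
So the content of the theorem is exactly that $\rho^{k,0}$ applied to $\SSS_{z_1,\ldots,z_k}(\Fssm_{\lambda,\infty})$ agrees with $\rho^{k,0}(\tssm_\lambda)$.

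The key step is to compare $\SSS_{z_1,\ldots,z_k}(\Fssm_{\lambda,\infty})$ with the stabilized $\SSS_{z_1,z_2,\ldots}$ expression defining $\tssm_\lambda$. Write $f(z_1,\ldots,z_k) = \prod_{i=1}^k (z_i/(1+z_i))^{\lambda_i} \cdot \prod_{j=1}^k \prod_{i=1}^j (1+z_i-z_j)/(1+z_i)$, so that $\Fssm_{\lambda,\infty} = f$. Then $\tssm_\lambda$ is, by construction in Section \ref{sec:incrgenfn}, the $N \to \infty$ stabilization of
\[
H_N = \SSS_{z_1,\ldots,z_N}\left( \prod_{i=1}^k \left(\frac{z_i}{1+z_i}\right)^{\lambda_i} \prod_{j=1}^N \prod_{i=1}^j \frac{1+z_i-z_j}{1+z_i} \right),
\]
and $H_k = \SSS_{z_1,\ldots,z_k}(\Fssm_{\lambda,\infty})$ precisely because for $k$ variables the product $\prod_{j=1}^k\prod_{i=1}^j$ is the full product appearing in $f$. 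Now I would apply Lemma \ref{lem:stab} repeatedly: passing from $H_N$ to $H_{N+1}$ multiplies by $g = \prod_{i=1}^{N+1}(1+z_i-z_{N+1})/(1+z_i)$, which satisfies $g|_{z_{N+1}=0}=1$, so $\SSS^{\leq N}_{z_1,\ldots,z_{N+1}}(H_N \cdot g) = H_N$. In particular the part of $H_N$ supported on partitions of length $\leq k$ never changes once $N \geq k$; it equals $\SSS_{z_1,\ldots,z_k}(\Fssm_{\lambda,\infty})$ for all $N \geq k$. The point is that $\rho^{k,0}$ kills every Schur function $\Sc_\mu$ with $\ell(\mu) > k$ (since $\rho^{k,0}(\Sc_\mu)$ is a symmetric function in only $k$ variables $\alpha_1,\ldots,\alpha_k$, and $\rho^{k,0}(\Sc_{1^{k+1}}) = \rho^{k,0}(\det(c_{1+j-i})) = e_{k+1}(\alpha) = 0$, hence $\rho^{k,0}(\Sc_\mu)=0$ whenever $\ell(\mu)>k$ by the Jacobi–Trudi / determinantal identity). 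Therefore
\[
\rho^{k,0}(\tssm_\lambda) = \rho^{k,0}\left( \SSS^{\leq k}_{z_1,z_2,\ldots}(\cdots) \right) = \rho^{k,0}\left( \SSS_{z_1,\ldots,z_k}(\Fssm_{\lambda,\infty}) \right) = \ssm_{\beta=0}(\Omega_{I_\lambda}).
\]

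The main obstacle I anticipate is being careful about the two distinct claims bundled into the equality $\rho^{k,0}(\tssm_\lambda) = \ssm_{\beta=0}(\Omega_{I_\lambda})$: first, that the length-$\leq k$ truncation of $\tssm_\lambda$ literally equals $\SSS_{z_1,\ldots,z_k}(\Fssm_{\lambda,\infty})$ as a formal sum of (possibly fake) Schur functions — this is where Lemma \ref{lem:stab} and the observation $H_N = H_{N-1}\cdot\prod_{i=1}^N(1+z_i-z_N)/(1+z_i)$ from Section \ref{sec:incrgenfn} do the work, and one must check that the straightening of fake Schur functions of length $> k$ in $\SSS_{z_1,\ldots,z_k}(\cdots)$ interacts correctly with the stabilization (but by construction $\SSS_{z_1,\ldots,z_k}$ already produces only length-$\leq k$ terms, so there is nothing extra to check here); and second, that $\rho^{k,0}$ annihilates exactly the length-$> k$ part and is injective enough on the length-$\leq k$ part that no cancellation occurs across the truncation — the latter is automatic since $\rho^{k,0}$ is applied after the $\SSS$-operation and commutes with taking the length-$\leq k$ part. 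Once these bookkeeping points are laid out cleanly, the proof is a short assembly of Corollary \ref{cor:ssmS}, the identity \eqref{eqn:Fssm2}, Lemma \ref{lem:stab}, and the vanishing $\rho^{k,0}(\Sc_\mu)=0$ for $\ell(\mu)>k$.
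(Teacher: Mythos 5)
Your proposal is correct and follows essentially the same route as the paper, whose proof is the one-line assembly of Corollary \ref{cor:ssmS}, the product form (\ref{eqn:Fssm2}) of $\Fssm_{\lambda,\infty}$, and Lemma \ref{lem:stab}; you have simply made explicit the stabilization bookkeeping and the fact that $\rho^{k,0}$ annihilates $\Sc_\mu$ for $\ell(\mu)>k$. No gaps.
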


\begin{proof}
The statement follows from Corollary \ref{cor:ssmS} if we use the form (\ref{eqn:Fssm2}) for $\Fssm_{\lambda,\infty}$, and Lemma~\ref{lem:stab}.
\end{proof}

The advantage of this theorem compared to Corollary \ref{cor:ssmS} is that now the generator function only depends on the partition. The disadvantage is that this general
generating function has infinitely many variables. The condition $n\geq \lambda_1+k$ is equivalent to the property that
the Young diagram of $\lambda$ fits into a $k\times (n-k)$ rectangle, or to the property that
$I_\lambda\subset \{1,\ldots,n\}$, or to the property that the elements of the orbit on the left hand side
have full rank $k$.

\subsection{SSM classes of general matrix Schubert cells in terms of $\tssm_\lambda$ functions}

Theorem~\ref{thm:partition_gen_fv} gives the $\tssm$-expansion of the $GL_k(\C)$-equivariant SSM classes of ``full-rank'' matrix Schubert cells, that is, those cells in
$\Hom(\C^k,\C^n)$ whose elements have rank $k$. While these cells are of the most interest, we will need $\tssm$-expansions of the SSM classes of smaller rank cells too.

Let $I=\{i_1<\ldots<i_d\}\in \I_{k,n}$ where $|I|=d\leq k$. Recall that the corresponding $\lambda\in \I'_{k,n}$ has the form
\[
\lambda=( \underbrace{n-d,\ldots,n-d}_{k-d}, \lambda_{k-d+1},\lambda_{k-d+2},\ldots,\lambda_k).
\]
Let $\Lambda(I)$ be the set of partitions $\mu$ obtained from this $\lambda$ by weakly increasing only the first $k-d$ components. That is, elements $\mu=(\mu_1,\ldots,\mu_k)$ of $\Lambda(I)$
are partitions and they satisfy
\begin{itemize}
\item $\mu_a\geq n-d$ for $a=1,\ldots,k-d$,
\item $\mu_a=\lambda_a=i_{k+1-a}-(k+1-a)$ for $a=k-d+1,\ldots,k$.
\end{itemize}

\begin{theorem} \label{thm:genOmega}
For $I\in \I_{k,n}$, $|I|=d\leq k$, $\Omega_I\subset \Hom(\C^k,\C^n)$ we have
\[
\ssm_{\beta=0}(\Omega_I)=\rho^{k,0}\left( \sum_{\mu \in \Lambda(I)} \tssm_\mu \right).
\]
\end{theorem}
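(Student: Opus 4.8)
The plan is to reduce Theorem~\ref{thm:genOmega} to the full-rank case already handled by Theorem~\ref{thm:partition_gen_fv}. The key observation is that a non-full-rank matrix Schubert cell $\Omega_I\subset\Hom(\C^k,\C^n)$ with $|I|=d<k$ should be related to a full-rank cell after enlarging $n$. Concretely, given $\lambda\in\I'_{k,n}$ of the form $(\underbrace{n-d,\ldots,n-d}_{k-d},\lambda_{k-d+1},\ldots,\lambda_k)$, I would work with a large $N\gg n$ so that $\lambda$ is no longer compatible with $N$ in the ``padded'' way—indeed the partition $\lambda$ (viewed as a partition, i.e. modulo trailing zeros) stays the same but the set $I_\lambda\subset\{1,\ldots,N\}$ now has all $k$ elements $\le N$, so $\Omega_{I_\lambda}\subset\Hom(\C^k,\C^N)$ is full rank. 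Then Theorem~\ref{thm:partition_gen_fv} applies and gives $\ssm_{\beta=0}(\Omega_{I_\lambda}\subset\Hom(\C^k,\C^N))=\rho^{k,0}(\tssm_\lambda)$. So the real content is a comparison between the SSM class of $\Omega_I\subset\Hom(\C^k,\C^n)$ and the SSM classes of the various full-rank cells sitting ``above'' it.

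The natural geometric tool is property~(\ref{itrans}): the inclusion $\iota:\Hom(\C^k,\C^n)\hookrightarrow\Hom(\C^k,\C^N)$ (extending by zero rows) is $GL_k(\C)$-equivariant, and I would check it is transversal to the orbit stratification of the target—the orbits it meets are exactly the $\Omega_\mu$ for $\mu\in\I'_{k,N}$ whose preimage is $\Omega_I$ (or empty). This would give $\ssm_{\beta=0}(\iota^{-1}(\Omega_\mu'))=\iota^*(\ssm_{\beta=0}(\Omega_{\mu'}))$, but that is not quite the sum we want. The cleaner route: use $\E_{\Omega_I} = \iota^{-1}$ of the indicator of the union of those larger orbits whose intersection with the image is $\Omega_I$. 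By the rank description~(\ref{eqn:rank}), the orbits $\Omega_{\mu'}\subset\Hom(\C^k,\C^N)$ with $\iota^{-1}(\Omega_{\mu'})=\Omega_I$ are precisely those indexed (in the $\lambda$-language) by partitions obtained from $\lambda$ by increasing only the first $k-d$ coordinates while keeping $\mu'_a=\lambda_a$ for $a>k-d$—that is, exactly $\Lambda(I)$ after appropriate bookkeeping with trailing entries that become $\le N$. Then additivity~(\ref{iadd}) of SSM classes together with~(\ref{itrans}) applied to $\sum_{\mu'}\E_{\Omega_{\mu'}}$ gives
\[
\ssm_{\beta=0}(\Omega_I)
=\iota^*\Bigl(\sum_{\mu'}\ssm_{\beta=0}(\Omega_{\mu'})\Bigr)
=\iota^*\Bigl(\sum_{\mu\in\Lambda(I)}\rho^{k,0}(\tssm_\mu)\Bigr),
\]
and since the $GL_k(\C)$-equivariant classes $\rho^{k,0}(\tssm_\mu)$ already live in $\C[\alpha_1,\ldots,\alpha_k]^{S_k}$ with no $\beta$'s, the map $\iota^*$ acts as the identity on them, yielding the claimed formula.

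An alternative, more algebraic, route that avoids delicate transversality verification: apply the interpolation characterization (Theorem~\ref{thm:intchar}) directly, i.e. show that $\rho^{k,0}\bigl(\sum_{\mu\in\Lambda(I)}\tssm_\mu\bigr)$, pulled back to $H^*_{GL_k\times B_n^-}$, satisfies properties (\ref{iprinc})--(\ref{idegree}) for the orbit $\Omega_I$. Since we already know $\csm(\Omega_I)=W_I$, it suffices to prove the identity $W_{I,\beta=0}=\rho^{k,0}\bigl(\prod_u(1-\alpha_u)^n\bigr)\cdot\rho^{k,0}\bigl(\sum_{\mu\in\Lambda(I)}\tssm_\mu\bigr)$, i.e. to match residue generating functions: expand $\ssm_{\beta=0}(\Omega_I)=W_{I,\beta=0}/\prod_u(1-\alpha_u)^n$ via Theorem~\ref{thm:genWD} as $\SSS^{k,0}(\Fssm_{\lambda,n})$, and on the other side expand each $\tssm_\mu$ using Lemma~\ref{lem:stab}, and check the two infinite sums of Schur functions agree. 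This amounts to a purely combinatorial identity of generating series in the $z_i$ after applying $\SSS$.

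The main obstacle I expect is precisely the bookkeeping in this combinatorial/geometric identification of $\Lambda(I)$: one must verify that the partitions $\mu$ obtained by raising only the first $k-d$ entries of $\lambda$ are exactly those appearing, with coefficient $1$ and no others, and that trailing-zero/compatibility conventions do not introduce spurious terms or double counting. In the geometric approach this is the transversality-plus-orbit-matching bookkeeping; in the algebraic approach it is the stabilization argument showing that $\SSS_{z_1,\ldots,z_N}(\Fssm_{\lambda,n}\cdot\prod_{j}\prod_i\frac{1+z_i-z_j}{1+z_i})$ splits, as $N\to\infty$, precisely into $\sum_{\mu\in\Lambda(I)}\tssm_\mu$. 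I would favor the geometric argument via (\ref{iadd}) and (\ref{itrans}) since the transversality of the coordinate inclusion $\Hom(\C^k,\C^n)\hookrightarrow\Hom(\C^k,\C^N)$ to the rank stratification is essentially immediate from~(\ref{eqn:rank}), and everything else is formal.
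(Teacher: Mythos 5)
There is a genuine gap in your main (geometric) route: you use the wrong map. You propose to pull back along the inclusion $\iota:\Hom(\C^k,\C^n)\hookrightarrow\Hom(\C^k,\C^N)$ given by appending zero rows. Under this map the preimage of an orbit $\Omega_J\subset\Hom(\C^k,\C^N)$ is, by the rank description (\ref{eqn:rank}), either empty (if $J\not\subset\{1,\ldots,n\}$) or the single orbit of $\Hom(\C^k,\C^n)$ with the same index set $J$; so the only orbit upstairs whose preimage is $\Omega_I$ is $\Omega_I$ itself, still of rank $d<k$, and no decomposition of $\E_{\Omega_I}$ into a sum over $\Lambda(I)$ arises. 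Worse, $\iota$ is not transversal to the non-full-rank strata: at $M_I$ the normal directions of $\Omega_I\subset\Hom(\C^k,\C^N)$ corresponding to entries $(v,u)$ with $v>n$ and $u>d$ lie neither in the image of $d\iota$ nor in $T\Omega_I$, so property (\ref{itrans}) does not apply. The paper instead uses the \emph{projection} $\pi:\Hom(\C^k,\C^N)\to\Hom(\C^k,\C^n)$ forgetting the bottom rows: $\pi$ is a submersion (transversality is automatic), and $\pi^{-1}(\Omega_I)=\bigcup\{\Omega_J: J\in\I_{k,N},\ J\cap\{1,\ldots,n\}=I\}$, which \emph{does} contain full-rank orbits --- exactly those indexed by $\mu\in\Lambda(I)$ with $\mu_1\le N-k$ --- and this is where your combinatorial identification of $\Lambda(I)$ (raising the first $k-d$ coordinates) correctly applies.

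Even after switching to the projection, one further step is needed that your bookkeeping omits: the union $\pi^{-1}(\Omega_I)$ also contains non-full-rank orbits $\Omega_J$ with $d\le|J|<k$, to which Theorem \ref{thm:partition_gen_fv} does not apply. The paper disposes of these by observing that their codimensions tend to infinity as $N\to\infty$, so their SSM classes contribute only in arbitrarily high degree, and the claimed identity follows in the limit. Your alternative algebraic route (matching $\SSS^{k,0}_{z_1,\ldots,z_k}(\Fssm_{\lambda,n})$ against $\sum_{\mu\in\Lambda(I)}\rho^{k,0}(\tssm_\mu)$ directly) is a legitimate strategy in principle, but as written it only restates the theorem: the summation of $\Fssm_{\mu,\infty}$ over the infinite set $\Lambda(I)$ and the comparison with $\Fssm_{\lambda,n}$ are exactly the content to be proved and are not carried out, so it cannot be counted as a proof either.
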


Observe that Theorem \ref{thm:partition_gen_fv} is the $d=k$ special case of this one.
\begin{proof}
For $N>n$ let $\pi:\Hom(\C^k,\C^N)\to \Hom(\C^k,\C^n)$ be the projection defined by forgetting the bottom $N-n$ rows of a $N\times k$ matrix. The projection $\pi$ is $GL_k(\C)\times B^-_N$-equivariant, where $B^-_N$ acts on the target through the map $B^-_N\to B^-_n$ assigning the upper-left $n\times n$ submatrix to an element of $B^-_N$.

Consider the cylinders
$\pi^{-1}(\Omega_I)\subset \Hom(\C^k,\C^N)$ for $I\in \I_{k,n}$.
They are $GL_k(\C)\times B^-_N$-invariant, and from the rank description of orbits (\ref{eqn:rank}) it follows that
\[
\pi^{-1}(\Omega_I)=\bigcup_{J\in \I_{k,n,N}(I)} \Omega_J
\]
where
\[
\I_{k,n,N}(I)=\{ J\in \I_{k,N}: J \cap \{1,\ldots,n\}=I \}.
\]

The map $\pi$ is a projection, hence it is transversal to the $\Omega_J$ stratification of $\Hom(\C^k,\C^n)$ (a Whitney stratification).
Hence, (\ref{itrans}) in Section \ref{sec:introCSM} implies that in $GL_k(\C)\times B^-_N$-equivariant cohomology
\begin{align*}
\ssm( \Omega_I\subset \Hom(\C^k,\C^n))  = &
\ssm( \pi^{-1}(\Omega_I)\subset \Hom(\C^k,\C^{N})) \\
 =&
\sum_{J\in \I_{k,n,N}(I)}
\ssm( \Omega_J\subset \Hom(\C^k,\C^N)).
\end{align*}
Because of the $GL_k(\C)\times B^-_N$-action on $\Hom(\C^k,\C^n)$ is through $GL_k(\C)\times B^-_n$, the left hand side can be interpreted as the $GL_k(\C)\times B^-_n$-equivariant SSM class. In particular---while the identity is in the completion of $\C[\alpha_1,\ldots,\alpha_k,\beta_1,\ldots,\beta_N]^{S_k}$---both sides only depend on the variables $\alpha_1,\ldots,\alpha_k,$ $\beta_1,$ $\ldots,$$\beta_n$.

It will be convenient to rewrite the last expression as
\begin{eqnarray*}
\ssm( \Omega_I\subset \Hom(\C^k,\C^n)) &= &
\sum_{J\in \I_{k,n,N(I)}, |J|=k}
\ssm(\Omega_J \subset \Hom(\C^k,\C^N))  \\
& & +
\sum_{J\in \I_{k,n,N(I)}, |J|<k}
\ssm(\Omega_J \subset \Hom(\C^k,\C^N)).
\end{eqnarray*}
Now let us restrict the group action to $GL_k(\C)$---that is we substitute $\beta_i=0$---and apply Theorem~\ref{thm:partition_gen_fv} to the terms in the first summation. We obtain
\begin{eqnarray} \label{eqn:almost}
\ssm_{\beta=0}( \Omega_I\subset \Hom(\C^k,\C^n)) &= &
\sum_{\mu\in \Lambda(I), \mu_1\leq N-k }
\rho^{k,0}(\tssm_\mu)  \\
& & +
\sum_{ |J|<k, J\cap \{1,\ldots,n\}=I}
\ssm_{\beta=0}(\Omega_J \subset \Hom(\C^k,\C^N)). \nonumber
\end{eqnarray}
As $N\to \infty$, the codimensions of the $\Omega_J$'s appearing in the second summation tend to infinity. Hence, the  degree of the second summation tends to infinity (where degree means the degree of the smallest non-zero component). Therefore applying $N \to \infty$ to (\ref{eqn:almost}) proves the theorem.
\end{proof}
\begin{example} \rm
It is instructive to compare the following two examples (c.f. Examples \ref{ex:31}, \ref{ex:31v}):
\begin{eqnarray*}
\ssm_{\beta=0}\left(
\Omega_{\{2,5\}}\subset \Hom(\C^2,\C^5)
\right)
&=&
\rho^{2,0}(\tssm_{(31)}), \\
\ssm_{\beta=0}\left(
\Omega_{\{2\}}\subset \Hom(\C^2,\C^4)
\right)
&=&
\rho^{2,0}(\tssm_{(31)}+\tssm_{(4,1)}+\tssm_{(5,1)}+\ldots).
\end{eqnarray*}
The partition associated to both $\{2,5\}\in \I_{2,5}$ and $\{2\}\in \I_{2,4}$ is $\lambda=(3,1)$, see Section \ref{sec:conventions}. Correspondingly, the fundamental class
of both $\Omega$ orbits above is $\rho^{2,0}(\Sc_{(31)})$. This Schur function is the smallest term of both SSM classes above. However, the full SSM classes are different.
That is, while the fundamental classes of matrix Schubert varieties only depend on the associated partitions (a phenomenon observed in \cite{ss, km}), in SSM theory this
only holds for full rank orbits.
\end{example}

\begin{remark}
Arguments similar to the ones used above (e.g. Theorem \ref{thm:genOmega} for $k,n=\infty$, $I=\emptyset$) show that
\begin{equation}\label{eqn:sum1}
\sum_\lambda \tssm_\lambda=1.
\end{equation}
In fact, using the ``triangularity'' property
\begin{equation}\label{eqn:triangular}
\ssm_\lambda = \Sc_\lambda+\text{higher degree terms}
\end{equation}
we can see that (\ref{eqn:sum1}) is the only linear relation among the functions $\tssm_\lambda$. If we declare $c_i$ ``of order $\varepsilon^i$'', and we declare the Schur
functions $\Sc_\lambda$ ``positive'', then property (\ref{eqn:triangular}) implies that the terms $\tssm_\lambda$ in (\ref{eqn:sum1}) are positive. Hence the collection  $\{
\tssm_\lambda\}_\lambda$ is a (formal power series valued) {\em probability distribution} on the set of partitions. In this language the $GL_\infty$-equivariant SSM class of
an equivariant constructible function on the $GL_\infty \times B^-_\infty$-representation $\Hom(\C^\infty,\C^\infty)$ is the {\em expected value} of the constructible
function. It would be interesting to see applications of this probability theory interpretation in enumertive geometry.
\end{remark}

\section{SSM classes for the $A_2$ quiver representation}\label{sec:A2}

Let $k \leq n$ be non-negative integers, $l=n-k$, and consider the $GL_k(\C)\times GL_{n}(\C)$ representation $\Hom(\C^k,\C^{n})$ defined by $(A,B).\phi=B\circ \phi \circ
A^{-1}$. This representation is also called the $A_2$ quiver representation. The orbits of this representations are
\[
\Sigma^r=\Sigma^r_{k,n}=\{\phi\in \Hom(\C^k,\C^{n}): \dim\ker \phi =r\}
\]
for $r=0,\ldots,k$.

The SSM class of $\Sigma^r_{k,n}$ is a non-homogeneous element in (the completion of)
\[
H^*(B(GL_k(\C)\times GL_n(\C)))=\C[\alpha_1,\ldots,\alpha_k,\beta_1,\ldots,\beta_n]^{S_k\times S_n}.
\]

\begin{theorem}\label{thm:main}
We have
\begin{equation}\label{eqn:cool}
\ssm(\Sigma^r_{k,n})=
\rho^{k,n}\left(
\mathop{\sum_{\lambda_r\geq r+l}}_{\lambda_{r+1}\leq r+l} \tssm_\lambda
\right).
\end{equation}
\end{theorem}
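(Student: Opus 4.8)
The plan is to reduce the statement, which concerns the $GL_k(\C)\times GL_n(\C)$--action, to the $GL_k(\C)\times B^-_n$--action of Sections~\ref{sec:SchubertCells}--\ref{sec:GFpartition}, where the SSM classes have already been computed. Since $\Sigma^r_{k,n}$ is $GL_n(\C)$--invariant it is in particular $B^-_n$--invariant, and by the rank description (\ref{eqn:rank}) the $B^-_n$--orbits contained in $\Sigma^r_{k,n}$ are exactly the matrix Schubert cells $\Omega_J$ with $|J|=k-r$; they are pairwise disjoint with union $\Sigma^r_{k,n}$, so $\E_{\Sigma^r_{k,n}}=\sum_{|J|=k-r}\E_{\Omega_J}$. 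By additivity of SSM classes (property (\ref{iadd})), $\sum_{|J|=k-r}\ssm(\Omega_J)$ is the $GL_k(\C)\times B^-_n$--equivariant SSM class of $\Sigma^r_{k,n}$, and the latter is the image of the $GL_k(\C)\times GL_n(\C)$--equivariant class $\ssm(\Sigma^r_{k,n})$ under the natural (degree--preserving, hence completion--compatible) injection $\C[\aalpha,\bbeta]^{S_k\times S_n}\hookrightarrow\C[\aalpha,\bbeta]^{S_k}$. This injection has image the $S_n$--invariant elements, and $\rho^{k,n}$ takes values among the $S_n$--invariants; hence it suffices to prove
\[
\sum_{|J|=k-r}\ssm(\Omega_J)=\rho^{k,n}\Big(\mathop{\sum_{\lambda_r\geq r+l}}_{\lambda_{r+1}\leq r+l}\tssm_\lambda\Big)
\]
in the completion of $\C[\aalpha,\bbeta]^{S_k}$.

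The combinatorial content of this identity is bookkeeping with the conversion formula of Section~\ref{sec:conventions}. For $J\in\I_{k,n}$ with $|J|=k-r$ the associated sequence $\lambda\in\I'_{k,n}$ begins with $r$ copies of $n-(k-r)=r+l$, and $\Lambda(J)$ consists of the partitions $\mu$ with $\mu_a\geq r+l$ for $a\leq r$ and $\mu_a=\lambda_a$ for $a>r$; the condition $i_{k-r}\leq n$ forces $\mu_{r+1}=\lambda_{r+1}=i_{k-r}-(k-r)\leq r+l$, and conversely $\mu_{r+1},\ldots,\mu_k$ reconstruct $J$ via the conversion formula. Hence the $\Lambda(J)$ are disjoint and
\[
\bigsqcup_{|J|=k-r}\Lambda(J)=\{\lambda\ \text{a partition},\ \ell(\lambda)\leq k,\ \lambda_r\geq r+l,\ \lambda_{r+1}\leq r+l\}.
\]
(Partitions of length $>k$ satisfying the inequalities appear in the sum of the theorem but lie in no $\Lambda(J)$; they are accounted for by the limiting procedure of the next step.)

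For the remaining identity I would rerun the proof of Theorem~\ref{thm:genOmega} keeping the variables $\bbeta$ throughout. Fixing $N\gg n$ and the projection $\pi\colon\Hom(\C^k,\C^N)\to\Hom(\C^k,\C^n)$, transversality of $\pi$ to the orbit stratification together with additivity gives, in $GL_k(\C)\times B^-_N$--equivariant cohomology,
\[
\sum_{|J|=k-r}\ssm\!\big(\Omega_J\subset\Hom(\C^k,\C^n)\big)=\sum_{\substack{\tilde J\in\I_{k,N}\\ |\tilde J\cap\{1,\ldots,n\}|=k-r}}\ssm\!\big(\Omega_{\tilde J}\subset\Hom(\C^k,\C^N)\big).
\]
The cells with $|\tilde J|<k$ have codimension growing with $N$ and disappear in the limit $N\to\infty$; the full--rank cells are governed by the $\bbeta$--analogue of Corollary~\ref{cor:ssmS}/Theorem~\ref{thm:partition_gen_fv}, namely $\ssm(\Omega_{\tilde J}\subset\Hom(\C^k,\C^N))=\SSS^{k,N}_{z_1,\ldots,z_k}(\Fssm_{\lambda(\tilde J),\infty})=\rho^{k,N}(\tssm_{\lambda(\tilde J)})$. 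This $\bbeta$--analogue rests on a $\bbeta$--enhanced residue formula for $W_I(\aalpha;\bbeta)$: the proof of Theorem~\ref{thm:Wres} (symmetrising $U_I$ into an iterated residue, then the Residue Theorem) adapts verbatim because the $\bbeta$--dependent factors $P_1,P_2,P_3$ of $U_I$ enter the residue kernel exactly as $\bbeta$ does in Lemma~\ref{lem:schur} and Proposition~\ref{prop:RESS} (as $\prod_u\prod_v(1+\beta_v/z_u)$ in the numerator), which upgrades Theorem~\ref{thm:genWD} to $\csm(\Omega_I)=\SSS^{k,n}_{z_1,\ldots,z_k}(\Fcsm_{\lambda,n})$ and hence Corollary~\ref{cor:ssmS}. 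Letting $N\to\infty$ and tracking the cancellation of the $\beta_{n+1},\ldots,\beta_N$--dependence (as in the proof of Theorem~\ref{thm:genOmega}) then turns the right--hand side into $\rho^{k,n}(\sum_{\lambda_r\geq r+l,\ \lambda_{r+1}\leq r+l}\tssm_\lambda)$. Alternatively, one can avoid the limit entirely and invoke the interpolation characterisation of Theorem~\ref{thm:intchar} for the $A_2$ quiver representation — which contains the scalars, has finitely many orbits, and satisfies the Euler condition (cf.~\cite{FR1}) — checking that $\prod_{u,v}(1+\beta_v-\alpha_u)\cdot\rho^{k,n}(\sum_\lambda\tssm_\lambda)$ has the three restriction properties of $\csm(\Sigma^r_{k,n})$; here the restriction map $\phi_{\Sigma^s}$ identifies $k-s$ of the $\alpha$'s with $k-s$ of the $\beta$'s, so that $\phi_{\Sigma^s}\circ\rho^{k,n}$ becomes $\rho^{s,l+s}$ by the cancellation property of supersymmetric functions, reducing each verification to the already established full--rank case.

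The main obstacle is the $\bbeta$--dependence. All the generating--function and residue results of Sections~\ref{sec:gen}--\ref{sec:GFpartition} (Theorems~\ref{thm:genWD}, \ref{thm:partition_gen_fv}, \ref{thm:genOmega} and the residue Theorem~\ref{thm:Wres}) are proved only after the substitution $\bbeta=0$, and this substitution is far from injective on $\C[\aalpha,\bbeta]^{S_k\times S_n}$; in particular it cannot detect the $\tssm_\lambda$ with $\ell(\lambda)>k$ that genuinely occur in the theorem (compare equations (\ref{i1})--(\ref{i2}) and the $\Sigma^0_{n,n+1}$ example). So one must genuinely carry the $\bbeta$'s through the residue computation, or argue via interpolation. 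The orbit decomposition, the reduction through the injective restriction map, and the combinatorial identification of $\bigsqcup_{|J|=k-r}\Lambda(J)$ are all routine by comparison.
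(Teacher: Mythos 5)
Your reduction to the $B^-_n$-orbit decomposition and the combinatorial identification of $\bigsqcup_{|J|=k-r}\Lambda(J)$ with the partitions of length $\le k$ satisfying $\lambda_r\ge r+l\ge\lambda_{r+1}$ are both correct, and you have correctly located the main difficulty: the generating-function results of Sections \ref{sec:gen}--\ref{sec:GFpartition} are proved only after setting $\bbeta=0$, a substitution which kills exactly the $\tssm_\lambda$ with $\ell(\lambda)>k$ that the theorem needs. But your proposed repair --- a ``$\bbeta$-enhanced'' version of Corollary \ref{cor:ssmS}/Theorem \ref{thm:genOmega} asserting $\ssm(\Omega_{\tilde J}\subset\Hom(\C^k,\C^N))=\rho^{k,N}(\tssm_{\lambda(\tilde J)})$ for individual full-rank cells --- cannot work. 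Anything in the image of $\rho^{k,N}$ (equivalently of $\SSS^{k,N}_{\zz}$) is symmetric in $\beta_1,\dots,\beta_N$, whereas $\ssm(\Omega_{\tilde J})=W_{\tilde J}/\prod(1+\beta_v-\alpha_u)$ is a $B^-_N$-equivariant class that genuinely depends on the ordering of the $\beta$'s (e.g.\ $\ssm(\Omega_{\{1\}}\subset\Hom(\C^1,\C^2))=1/(1+\beta_1-\alpha_1)$). Correspondingly, the residue kernel of Theorem \ref{thm:Wres} does not ``adapt verbatim'': in $U_I$ the factors $(1+\beta_v-\alpha_u)$ and $(\beta_v-\alpha_u)$ occur over $v$-ranges depending on $i_u$, not as the $S_n$-symmetric product $\prod_{v}(1+\beta_v/z_i)$ appearing in Proposition \ref{prop:RESS}. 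Only the sum over all cells of a fixed rank is symmetric in the $\beta$'s, and identifying that sum is precisely the problem to be solved.

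The paper closes this gap by a mechanism your proposal does not supply: by transversal-pullback stability of SSM classes (property (\ref{itrans}); alternatively by the Parusinski--Pragacz sieve formula) there is a single power series $\PPP^r_l$ in the $c_i$, depending only on $r$ and $l=n-k$, with $\ssm(\Sigma^r_{k,n})=\rho^{k,n}(\PPP^r_l)$ for \emph{every} $k$. The $\bbeta=0$ computation (Theorem \ref{thm:genOmega}) then determines $\PPP^r_l$ modulo $\ker\rho^{k,0}=\spa\{\Sc_\lambda:\lambda_{k+1}\ge 1\}$, and since this holds for all $k\ge r$ simultaneously, $\PPP^r_l=\sum\tssm_\lambda$ exactly; applying $\rho^{k,n}$ gives the theorem. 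This supersymmetry/stability step is what converts the family of $\bbeta=0$ identities into the full $\bbeta$-dependent statement, and without it (or without actually carrying out the interpolation route you only gesture at --- in particular the divisibility axiom (\ref{idivis}) for the candidate class is not a formal consequence of the full-rank case) the proof is incomplete.
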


Another way of describing the indexing set in (\ref{eqn:cool}) is that it consists of partitions whose Young diagram is contained in the infinite $\Gamma$-shaped region determined
by an $r\times (r+l)$ rectangle. Equivalently, partitions, whose Young diagram contains the box $(r,r+l)$ but does not contain the box $(r+1,r+l+1)$.

\begin{proof}
First we claim that there exist a formal power series $\PPP^r_l$ in $c_1,c_2,\ldots$ only depending on $r$ and $l$ (not on $k$ and $n$ separately), such that
\[
\ssm(\Sigma^r_{k,n})=
\rho^{k,n}\left( \PPP^r_l\right).
\]
The analogous statement for the fundamental class $[\Sigma^r_{k,n}]$ (or even for Thom polynomials of contact singularities) is well known. In fact, the proof for the
fundamental class depends on the property that the fundamental class of a transversal pullback is the pullback of the fundamental class. Since this property extends from
fundamental class to SSM class, see property (\ref{itrans}) in Section \ref{sec:introCSM}, the same proof also proves our claim. Alternatively, the exclusion-inclusion
formula given in \cite{PP} for $\ssm(\Sigma^r_{k,n})$---recalled in Section \ref{sec:PP}---only depends on $r$ and $l$.

It follows that
\begin{equation}\label{qw1}
\ssm_{\beta=0}(\Sigma^r_{k,n})=
\rho^{k,0}\left( \PPP^r_l\right).
\end{equation}
We have
\[
\Sigma^r_{k,n}=\cup_{I\in \I_{k,n}, |I|=k-r} \Omega_I,
\]
and both the $\Sigma^r_{k,n}$ and the $\Omega_I$ sets are $GL_k(\C)$-invariant. Therefore from Theorem \ref{thm:genOmega} we get
\[
\ssm_{\beta=0}(\Sigma^r_{k,n})=\rho^{k,0}
\left(
\mathop{\sum_{I\in \I_{k,n}}}_{|I|=k-r} \sum_{\lambda\in \Lambda(I)} \tssm_\lambda
\right).
\]
Using the conversion formulas of Section \ref{sec:conventions} this is rewritten as
\begin{equation}\label{qw2}
\ssm_{\beta=0}(\Sigma^r_{k,n})=\rho^{k,0}
\left(
\mathop{\sum_{\lambda=(\lambda_1,\ldots,\lambda_k)}}_{\lambda_r\geq r+l, \lambda_{r+1}\leq r+l}  \tssm_\lambda
\right).
\end{equation}
Comparing (\ref{qw1}) with (\ref{qw2}), and using the fact that
$
\ker \rho^{k,n} =$span$\{ \Sc_\lambda : \lambda_{k+1}\geq n+1\}
$
we obtain
\[ \PPP^r_l=
\mathop{\sum_{\lambda=(\lambda_1,\ldots,\lambda_k)}}_{\lambda_r\geq r+l, \lambda_{r+1}\leq r+l}  \tssm_\lambda
+
\sum_{\lambda_{k+1}\geq 1} a_\lambda \Sc_\lambda.
\]
Since this holds for all $k\geq r$, we have
\[ \PPP^r_l=
\mathop{\sum_{\lambda_r\geq r+l}}_{\lambda_{r+1}\leq r+l}  \tssm_\lambda
\]
what we wanted to prove.
\end{proof}

\begin{remark}
The $A_2$ quiver representation is the prototype of degeneracy loci theory.
The fundamental class of the orbit closures for this representation, $[\Sigma^r_{k,n}]=\rho^{k,n}(\Sc_{(r+l)^r})$ (called Giambelli-Thom-Porteous formula) is a positive and
very simple expansion in terms of the ``atoms'' of fundamental class theory, the Schur functions. The very same positivity and simplicity is displayed in Theorem
\ref{thm:main} for the SSM class, if we choose our ``atoms'' for the SSM theory to be the $\tssm_\lambda$ functions. This is one of the main messages of the present paper:
the natural presentation of SSM classes is in terms of $\tssm_\lambda$ functions. Of course, for more complicated quivers, or for higher jet representations (singularity
theory) the coefficients will be more complicated. We expect, however, that the coefficients will still be non-negative for many geometrically relevant representations. More
evidence towards this expectation will be shown in \cite{Balazs, job}. Finally, this expectation, together with Conjecture \ref{conj:positivity}, is the ``two-step'' positivity
structure we are conjecturing for SSM classes of geometrically relevant degeneracy loci.
\end{remark}

An exclusion-inclusion type formula for $\ssm(\Sigma^r_{k,n})$ was proved by Parusinski-Pragacz \cite{PP}. For completeness, in Appendix A (Section \ref{sec:PP}) we reprove
the Parusinski-Pragacz formula, together with some additional generating series description.

\section{Appendix A: The Parusinski-Pragacz-formula} \label{sec:PP}
A seminal paper on CSM/SSM classes of degeneracy loci is \cite{PP}, where the authors present a sieve type formula for $\ssm(\Sigma^r_{k,n})$ for the $A_2$ quiver
representation (see Section~\ref{sec:A2}). In this section we give a modern proof of their result, but essentially following the line of their arguments. The reason for
giving a new proof is twofold. On the one hand we add to the results of \cite{PP} by giving a generating series description of a main ingredient. On the other hand, in
Section \ref{sec:FiberedRes} we set up a general framework of calculating SSM classes of degeneracy loci once a fibered resolution is found; we believe this will be useful
in future calculations both for quivers and singularities.

For $k\leq n$, and $\mu,\nu$ partitions of length at most $k$ let
\[
D^{k,n}_{\mu,\nu}=\det \begin{pmatrix} \binom{\mu_i+k-i+\nu_j+n-j}{\mu_i+k-i} \end{pmatrix}_{i,j=1,\ldots,k}.
\]

\begin{theorem} [essentially \cite{PP}]\label{thm:PPumbrella}
For $k\leq n$, $l=n-k$, we have
\begin{equation}\label{pp1}
\ssm(\Sigma^r)=\sum_{s=r}^k (-1)^{s-r} \binom{s}{r} \Phi^s_{k,n},
\end{equation}
where
\begin{equation}\label{pp2}
\Phi_{k,n}^s = \SSS_{z_1,\ldots}^{k,n}
\left(  \prod_{i=1}^s \left( \frac{z_i}{1+z_i} \right)^{s+l} \prod_{j=s+1}^\infty \prod_{i=1}^s \frac{1+z_i-z_j}{1+z_i} \right),
\end{equation}
as well as
\begin{equation}\label{pp3}
\Phi^s_{k,n}=\rho^{k,n}\left( \sum_{l(\mu)\leq s} \sum_{l(\nu)\leq s} (-1)^{|\mu|+|\nu|} D^{s,s+l}_{\mu,\nu} \Sc_{(s+l)^s+\mu,\nu^T}\right),
\end{equation}
and
\begin{equation}\label{pp4}
D^{s,s+l}_{\mu,\nu}\geq 0.
\end{equation}
\end{theorem}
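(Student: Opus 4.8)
The plan is to follow Parusinski--Pragacz and base the whole computation on a fibered resolution of $\overline{\Sigma^r}$, using the framework of Section~\ref{sec:FiberedRes} together with properties (\ref{ielore}) and (\ref{itrans}) of Section~\ref{sec:introCSM} and the residue calculus of Section~\ref{sec:sym}. For $r\le s\le k$ introduce the incidence variety
\[
Z^s=\{(\phi,K)\in\Hom(\C^k,\C^n)\times\Gr(s,k):\ \phi|_K=0\},
\]
which is the total space of the $GL_k(\C)\times GL_n(\C)$-equivariant vector bundle $\Hom(\C^k/S,\C^n)$ over $\Gr(s,k)$, where $S$ is the tautological rank-$s$ subbundle; in particular $Z^s$ is smooth. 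The projection $p_s\colon Z^s\to\Hom(\C^k,\C^n)$ has image $\overline{\Sigma^s}$, and its fibre over a point of $\Sigma^t$ is $\Gr(s,t)$, of Euler characteristic $\binom{t}{s}$ (empty, and $\chi=0$, when $t<s$). Hence property (\ref{ielore}) gives $(p_s)_*(c(TZ^s))=\sum_{t=s}^{k}\binom{t}{s}\csm(\Sigma^t)$. Setting $\Phi^s_{k,n}:=(p_s)_*(c(TZ^s))/c(T\Hom(\C^k,\C^n))$ and dividing through by $c(T\Hom(\C^k,\C^n))$ yields $\Phi^s_{k,n}=\sum_{t=s}^{k}\binom{t}{s}\ssm(\Sigma^t)$; binomial inversion (the unitriangular matrix $(\binom{t}{s})_{s,t}$ has inverse $((-1)^{t-s}\binom{t}{s})$) gives exactly (\ref{pp1}).

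Next I would make $\Phi^s_{k,n}$ explicit. Since $Z^s$ is a vector bundle over $\Gr(s,k)$ with fibre $\Hom(Q,\C^n)=Q^\vee\otimes\C^n$, where $Q=\C^k/S$, its total Chern class is the pullback of $c(S^\vee\otimes Q)\,c(Q^\vee\otimes\C^n)$, the first factor being $c(T\Gr(s,k))$. Factoring $p_s$ as the closed embedding of $Z^s$ into the trivial bundle $\underline{\Hom(\C^k,\C^n)}$ over $\Gr(s,k)$ as the subbundle $Q^\vee\otimes\C^n$ (with normal bundle $S^\vee\otimes\C^n$), followed by projection to $\Hom(\C^k,\C^n)$, the projection formula gives
\[
\Phi^s_{k,n}=\int_{\Gr(s,k)}\frac{c(S^\vee\otimes Q)\,c(Q^\vee\otimes\C^n)\,e(S^\vee\otimes\C^n)}{c(T\Hom(\C^k,\C^n))},
\]
where $\int_{\Gr(s,k)}$ is the equivariant Gysin pushforward to a point. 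Writing this Gysin map as a localization sum / iterated residue over the Chern roots $z_1,\dots,z_s$ of $S$ (exactly as in the proofs of Lemma~\ref{lem:schur} and Theorem~\ref{thm:Wres}), rewriting the $c$'s and $e$ of the tensor bundles in terms of the $z_i$ and of $\aalpha,\bbeta$, and passing to the stable form ($k\to\infty$ with $l=n-k$ fixed, as in Section~\ref{sec:GFpartition}), the integrand turns into the generating function of (\ref{pp2}); this is the residue-to-$\SSS^{k,n}$ dictionary of Section~\ref{sec:sym}. This proves (\ref{pp2}).

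To obtain (\ref{pp3}) I would evaluate the same Grassmannian integral differently: expand $c(S^\vee\otimes Q)\,e(S^\vee\otimes\C^n)$ and $c(Q^\vee\otimes\C^n)/c(T\Hom(\C^k,\C^n))$ into Schur polynomials in the Chern roots of $S$ and of $Q$, and apply $\int_{\Gr(s,k)}$ term by term via the classical Giambelli/Jacobi--Trudi formula for Schur polynomials in the tautological subbundle of a Grassmannian bundle. Collecting terms, the coefficients are determinants of binomial coefficients; matching indices identifies them with $D^{s,s+l}_{\mu,\nu}$ and the surviving Schur functions with $\Sc_{(s+l)^s+\mu,\,\nu^T}$, which is precisely (\ref{pp3})---the Parusinski--Pragacz determinantal formula in the present notation. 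Finally (\ref{pp4}) is purely combinatorial: since $\mu$ and $\nu$ are partitions, the sequences $a_i=\mu_i+s-i$ and $b_j=\nu_j+(s+l)-j$ ($i,j=1,\dots,s$) are strictly decreasing and non-negative, so $D^{s,s+l}_{\mu,\nu}=\det\bigl(\binom{a_i+b_j}{a_i}\bigr)_{i,j}$ is a binomial determinant of Lindström--Gessel--Viennot type; it counts families of pairwise non-intersecting lattice paths, hence is a non-negative integer.

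The step I expect to be the main obstacle is the bookkeeping in the last two paragraphs: turning the equivariant Gysin pushforward over $\Gr(s,k)$ into, on one hand, the exact iterated-residue generating function of (\ref{pp2}) with the correct $(1+z_i)$-denominators produced by the SSM normalization, and, on the other hand, the exact Schur expansion (\ref{pp3}) with the right determinantal coefficients, the shift by $(s+l)^s$, and the conjugate $\nu^T$---and then checking the two descriptions agree. The residue manipulation mirrors that of Theorems~\ref{thm:Wres} and~\ref{thm:csmW}, but the $r=k-s$ "extra'' variables and the stabilization argument require care; once the determinant is written in the stated form, (\ref{pp4}) is standard.
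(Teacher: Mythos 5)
Your treatment of (\ref{pp1}) and (\ref{pp4}) matches the paper's: the same incidence variety over $\Gr_s(\C^k)$, the same Euler--characteristic count $\binom{t}{s}$ fed into property (\ref{ielore}), Pascal-matrix inversion, and the Lindstr\"om--Gessel--Viennot interpretation of the binomial determinant. Your integral expression for $\Phi^s_{k,n}$ also agrees with the paper's (\ref{eq:intPhi}), obtained there from Proposition \ref{prop:resH} via the same observation that the normal bundle of $Z^s$ extends over the ambient trivial bundle.

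The gap is in the passage from that Grassmannian integral to (\ref{pp2}) and (\ref{pp3}), and it is not the ``bookkeeping'' you flag: the missing idea is the Supersymmetry Lemma \ref{lem:ss}, $\Phi^s_{k+1,n+1}(\aalpha,t;\bbeta,t)=\Phi^s_{k,n}(\aalpha;\bbeta)$. Your plan for (\ref{pp2}) is to localize the integral with the $\beta$'s present and read off the $\SSS^{k,n}$ form directly, but the fixed-point weight contains the factors $\prod_{v}\frac{\beta_v-\alpha_u}{1+\beta_v-\alpha_u}$, and these cannot be put in the shape $p(\zz)\cdot\prod_i\bigl(\prod_v(1+\beta_v/z_i)/\prod_u(1+\alpha_u/z_i)\bigr)$ with $p$ independent of $\bbeta$ that the residue-to-$\SSS^{k,n}$ dictionary of Proposition \ref{prop:RESS} requires; the denominators $1+\beta_v-\alpha_u$ have nowhere to go. Indeed it is not even clear a priori that $\Phi^s_{k,n}$ lies in the image of $\rho^{k,n}$ (a proper subring), so ``the generating function of (\ref{pp2})'' is not yet well posed. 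The paper resolves this in two steps: the residue computation is carried out only at $\beta=0$, where the dictionary does apply (Proposition \ref{prop:PhiS1}), and supersymmetry then guarantees that $\Phi^s_{k,n}$ is a combination of the $\rho^{k,n}(\Sc_\lambda)$ and is determined by its $\beta=0$ specializations for all large $k$ (Theorem \ref{thm:PhiS2}). Likewise for (\ref{pp3}) the paper does not push Schur expansions through $\int_{\Gr_s(\C^k)}$ term by term; it extracts the support constraint $\lambda_{s+1}\le s$ from the residue form and uses supersymmetry to reduce to $k=s$, $n=s+l$, where the Grassmannian is a point, the integral collapses to $\prod_{u,v}\frac{\beta_v-\alpha_u}{1+\beta_v-\alpha_u}$, and the coefficients $D^{s,s+l}_{\mu,\nu}$, the shift by $(s+l)^s$ and the conjugate $\nu^T$ come from the known Schur expansion of the Segre class of a tensor product \cite{LaLaT} together with the factorization formula. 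Your term-by-term Giambelli computation asserts rather than derives exactly these features; you should either carry it out in full or import the supersymmetry reduction.
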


\begin{remark}
Statements (\ref{pp1}) and (\ref{pp3}) were proved in \cite{PP} (precisely speaking, Theorem 2.1 of \cite{PP} is for the closure of $\Sigma^r$, but due to additivity of SSM
classes it is obviously equivalent to (\ref{pp1}), cf.~Theorem~ \ref{thm:PP} (2)).
\end{remark}

Statement (\ref{pp4}) is known in relation with Segre classes of tensor products of vector bundles (see \cite{LaLaT}). Here is a sketch of a combinatorial proof. Consider
the oriented graph whose vertices are the integer points of the real plane, and whose edges are all the length 1 segments among them, oriented left/up. Consider the
``source'' points $P_i=(\mu_i+s-i,0)$ for $i=1,\ldots,s$ and the ``sink'' points $Q_j=(0,\nu_j+(s+l)-j)$ for $j=1,\ldots,s$. Applying the Lindstr\"om-Gessel-Viennot lemma
(e.g. \cite{Li}) to this situation interprets $D^{s,s+l}_{\mu,\nu}$ as the number of certain non-intersection paths, that is, $D^{s,s+l}_{\mu,\nu}$ is non-negative.

\smallskip

In the rest of this section---after proving some generalities about fibered resolutions---we give a full proof of Theorem \ref{thm:PPumbrella}. Namely Theorem \ref{thm:PP}
proves (\ref{pp1}), Theorem \ref{thm:PhiS2} proves (\ref{pp2}), and Theorem \ref{thm:Dexp} proves (\ref{pp3}).

\subsection{Fibered resolution} \label{sec:FiberedRes}

Let $\Sigma\subset V$ be in invariant subvariety of the $G$-representation $V$. The $G$-equivariant map $\eta:\tilde{\Sigma} \to V$ is called a fibered resolution of
$\Sigma$, if it is a resolution of singularities of $\Sigma$, moreover, if $\tilde{\Sigma}$ is a total space of a $G$-vector bundle $\tilde{\Sigma}\to K$ over a smooth
compact $G$-variety $K$ and the resolution $\eta$ factors as $\eta=\pi_V\circ i$, where $i: \tilde{\Sigma} \subset K\times V$ is a $G$-equivariant embedding of vector
bundles and $\pi_V$ is the projection to $V$. That is, $\eta: \tilde{\Sigma} \to V$ is a $G$-equivariant fibered resolution, if we have a $G$-equivariant commutative diagram
\[
\xymatrix{ \tilde{\Sigma} \ar[dr] \ar[r]^-i \ar@/^2pc/[rr]^\eta& K\times V \ar[d]^{\pi_K} \ar[r]^-{\pi_V}& V  \\
                              & K. &                                          }
\]
Consider the $G$-equivariant quotient bundle $\nu=(K \times V \to K)/(\tilde{\Sigma}\to K)$ over $K$. This bundle, pulled back to $\tilde{\Sigma}$ is the normal bundle of
the embedding $i:\tilde{\Sigma} \to K\times V$. The bundle $\nu$ pulled back over $K \times V$ has a natural section $\sigma$ given by $\sigma(k,v)=v+i(\tilde{\Sigma}_k)$
(where $k\in K, v\in V$ and $\tilde{\Sigma}_k$ is the fiber of $\tilde{\Sigma} \to K$ over $k$). The section $\sigma$ is transversal to the 0-section, and
$\sigma^{-1}(0)=i(\tilde{\Sigma})$. Hence we have the remarkable situation that the normal bundle of $i(\tilde{\Sigma})\subset K\times V$ extends to a bundle $\nu$ over
$K\times V$.

In the cohomology calculations below we work in $G$-equivariant cohomology, and---as custo\-ma\-ry---we do not indicate pull-back bundles (e.g. $\nu$ may denote bundles over
$\tilde{\Sigma}, K$, or $K\times V$ respectively). The cohomology of a total space and the base space of a vector bundle will be identified without explicit notation.

We will be concerned with two $G$-equivariant cohomology classes in $V$: the fundamental class $[\Sigma]$ of $\Sigma$ in $V$, and the common value
\begin{equation}\label{eqn:Phi}
\Phi_\Sigma:=
\frac{\eta_*(c(T\tilde{\Sigma}))}{c(V)}
=
\eta_*\left(  \frac{c(T\tilde{\Sigma})}{c(V)} \right)
=
\eta_*\left( c(-\nu) c(TK) \right).
\end{equation}
The fist equality follows by adjunction and second one follows from the calculation
\[
\frac{c(T\tilde{\Sigma})}{c(V)}=\frac{c(T\tilde{\Sigma})}{c(V)c(TK)}c(TK)=c(-\nu)c(TK).
\]

\begin{proposition} \label{prop:resH}
We have
\[
[\Sigma]  = \int_K e(\nu), \qquad\qquad
\Phi_\Sigma  = \int_K e(\nu) c(-\nu) c(TK).
\]
\end{proposition}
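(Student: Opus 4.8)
The plan is to unwind both formulas from the fibered resolution setup and the pushforward descriptions already assembled in (\ref{eqn:Phi}), using only the projection formula and the fact that the section $\sigma$ built from the embedding $i:\tilde\Sigma\hookrightarrow K\times V$ cuts out $i(\tilde\Sigma)$ transversally.

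First I would treat the fundamental class. Since $\eta=\pi_V\circ i$ is a resolution of singularities of $\Sigma\subset V$, we have $[\Sigma]=\eta_*(1)=\pi_{V*}(i_*(1))$ in $G$-equivariant cohomology of $V$. The class $i_*(1)\in H^*_G(K\times V)$ is the fundamental class of $i(\tilde\Sigma)$, and because $\sigma$ is a section of $\nu$ (pulled back to $K\times V$) transversal to the zero section with $\sigma^{-1}(0)=i(\tilde\Sigma)$, this fundamental class equals the Euler class $e(\nu)\in H^*_G(K\times V)$. Pushing forward to $V$: the projection $\pi_V:K\times V\to V$ has fiber $K$, and after identifying $H^*_G(K\times V)$ with $H^*_G(K)$ (since $V$ is contractible / an affine space, $H^*_G(K\times V)\cong H^*_G(K)$), the pushforward $\pi_{V*}$ becomes integration over $K$, i.e. $\int_K$. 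Hence $[\Sigma]=\int_K e(\nu)$.

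Next the class $\Phi_\Sigma$. From (\ref{eqn:Phi}) we already have $\Phi_\Sigma=\eta_*\big(c(-\nu)c(TK)\big)$, where $c(-\nu)c(TK)\in H^*_G(\tilde\Sigma)\cong H^*_G(K)$. Writing $\eta_*=\pi_{V*}\circ i_*$ and applying the projection formula to the embedding $i$: for a class $\xi$ on $K\times V$ whose restriction to $i(\tilde\Sigma)$ is the given class (here $c(-\nu)c(TK)$ makes sense already on $K\times V$ via pullback), we get $i_*\big(i^*\xi\big)=\xi\cdot i_*(1)=\xi\cdot e(\nu)$. Thus $i_*\big(c(-\nu)c(TK)\big)=e(\nu)c(-\nu)c(TK)$ in $H^*_G(K\times V)$, and pushing down along $\pi_V$ as before gives $\Phi_\Sigma=\int_K e(\nu)c(-\nu)c(TK)$.

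The only genuinely delicate point—which I would spell out carefully rather than wave at—is the identification of the pushforward $\pi_{V*}:H^*_G(K\times V)\to H^*_G(V)$ with $\int_K:H^*_G(K)\to H^*_G(\mathrm{pt})$, together with the compatibility of the various pullbacks of $\nu$ (over $\tilde\Sigma$, over $K$, over $K\times V$) under this identification; this is exactly the bookkeeping the paper flags by saying pull-back bundles are not indicated. Concretely one uses that $H^*_G(K\times V)=H^*_G(K)$ because $V$ is an equivariantly contractible vector space, that $\pi_{V*}$ corresponds to the Gysin map of the fiber bundle with compact fiber $K$, hence to $\int_K$, and that $e(\nu)$, $c(\nu)$, $c(TK)$ all live (compatibly) in $H^*_G(K)$ under these identifications. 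Once this is in place, both formulas are immediate from the projection formula and the transversality of $\sigma$. No obstacle beyond this routine-but-careful identification is expected.
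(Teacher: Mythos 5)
Your proposal is correct and follows essentially the same route as the paper: both derive $i_*(1)=e(\nu)$ from the transversality of the section $\sigma$, then use the adjunction (projection) formula together with the fact that $\nu$ extends from $\tilde{\Sigma}$ to $K\times V$, and identify $\pi_{V*}$ with $\int_K$. The extra care you devote to the identification $H^*_G(K\times V)\cong H^*_G(K)$ is exactly the bookkeeping the paper leaves implicit.
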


\begin{proof}
Since the section $\sigma$ described above is transversal to the 0-section, and $\sigma^{-1}(0)=i(\tilde{\Sigma})$ we have $i_*(1)=e(\nu)$ and
\[
[\Sigma]=\eta_*(1)=\pi_{V*}i_*(1)=\pi_{V*}(e(\nu))=\int_K e(\nu)
\]
proving the first statement. The second statement follows from the calculation
\[
\Phi_\Sigma
=\eta_*( c(-\nu)c(TK) )
=\int_K i_*(c(-\nu)c(TK))
=\int_K i_*(c(-\nu))c(TK) \hskip 3 true cm \
\]
\[
\ \hskip 4 true cm =\int_K i_*(i^*c(-\nu)) c(TK)
=\int_K i_*(1) c(-\nu) c(TK)
=\int_K e(\nu) c(-\nu) c(TK),
\]
where we (repeatedly) used the adjunction formula, and the fact that $\nu$ extends from $\tilde{\Sigma}$ to~$K\times V$.
\end{proof}

Only the second statement of Proposition \ref{prop:resH} is relevant for the present paper---and in fact the first one follows from the second one. We included the first one
(well known in the theory of fundamental classes \cite{bsz, noAss, ts}) for comparison purposes.


\subsection{CSM/SSM classes in terms of $\Phi$-classes}
 Consider the fibered resolution
\begin{equation}\label{fig:PPP}
\xymatrix{ \tilde{\Sigma^r} \ar[dr] \ar[r]^-i \ar@/^2pc/[rr]^{\eta_r}& \Gr_r(\C^k) \times \Hom(\C^k,\C^{n}) \ar[d]^{\pi_1} \ar[r]^-{\pi_2}& \Hom(\C^k,\C^{n})  \\
                              & \Gr_r(\C^k) &                                          }
\end{equation}
of $\overline{\Sigma^r}$, where
\[
\tilde{\Sigma^r}=\{ (V,\phi)\in \Gr_r(\C^k) \times \Hom(\C^k,\C^{n}): \phi|_V=0\}
\]
with the obvious embedding into $\Gr_r(\C^k) \times \Hom(\C^k,\C^{n})$ and projection to $\Gr_r(\C^k)$.


Define $\Phi_{k,n}^{r}$ to be the class in (\ref{eqn:Phi}) for the fibered resolution (\ref{fig:PPP}).
The following theorem is  equivalent to Theorem 2.1 of \cite{PP}.

\begin{theorem} \label{thm:PP}
We have
\begin{enumerate}
\item \label{PPthmegy}
\[
\Phi_{k,n}^r
=\sum_{s=r}^k \binom{s}{r} \ssm(\Sigma^s)
=\sum_{s=r}^k \binom{s-1}{r-1} \ssm(\overline{\Sigma^s}).
\]
\item \label{PPthmket}
\[
\ssm(\overline{\Sigma^r})=\sum_{s=r}^k (-1)^{s-r} \binom{s-1}{r-1} \Phi_{k,n}^s,
\qquad
\ssm(\Sigma^r)=\sum_{s=r}^k (-1)^{s-r} \binom{s}{r} \Phi_{k,n}^s.
\]
\end{enumerate}
\end{theorem}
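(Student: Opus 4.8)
The plan is to apply the basic functoriality property (\ref{ielore}) of equivariant CSM classes to the resolution map $\eta_r:\tilde{\Sigma^r}\to\Hom(\C^k,\C^n)$ appearing in diagram (\ref{fig:PPP}). Both spaces are smooth (the source is a vector bundle over a Grassmannian, the target is a vector space) and $\eta_r$ is proper (a closed embedding followed by projection off the compact factor $\Gr_r(\C^k)$), so (\ref{ielore}) applies in its equivariant form and gives $\eta_{r*}(c(T\tilde{\Sigma^r}))=\sum_a a\,\csm(M_a)$ with $M_a=\{\phi:\chi(\eta_r^{-1}(\phi))=a\}$. The first step is therefore to identify the fibers: for $\phi\in\Hom(\C^k,\C^n)$ one has $\eta_r^{-1}(\phi)=\{V\in\Gr_r(\C^k):\phi|_V=0\}=\Gr_r(\ker\phi)$, so if $\phi\in\Sigma^s$, i.e. $\dim\ker\phi=s$, the fiber is $\Gr_r(\C^s)$ with Euler characteristic $\binom{s}{r}$ (which is $0$ when $s<r$, consistently with $\eta_r$ being a resolution of $\overline{\Sigma^r}$). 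Regrouping the $M_a$ by the value of $\dim\ker\phi$ and using additivity (\ref{iadd}) then yields $\eta_{r*}(c(T\tilde{\Sigma^r}))=\sum_{s=r}^{k}\binom{s}{r}\csm(\Sigma^s)$.

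Next I would divide this identity by the equivariantly constant class $c(\Hom(\C^k,\C^n))$, which commutes with $\eta_{r*}$ by the projection formula. Using $\ssm(\Sigma^s)=\csm(\Sigma^s)/c(\Hom(\C^k,\C^n))$ together with the definition (\ref{eqn:Phi}) of $\Phi^r_{k,n}$ as $\eta_{r*}(c(T\tilde{\Sigma^r}))/c(\Hom(\C^k,\C^n))$, this gives the first equality of part (\ref{PPthmegy}), namely $\Phi^r_{k,n}=\sum_{s=r}^{k}\binom{s}{r}\ssm(\Sigma^s)$. For the second equality I pass from orbits to orbit closures: since $\overline{\Sigma^s}=\bigsqcup_{t\ge s}\Sigma^t$, additivity gives $\ssm(\overline{\Sigma^s})=\sum_{t\ge s}\ssm(\Sigma^t)$, and then the hockey-stick identity $\sum_{s=r}^{t}\binom{s-1}{r-1}=\binom{t}{r}$ rewrites $\sum_{s=r}^{k}\binom{s-1}{r-1}\ssm(\overline{\Sigma^s})$ as $\sum_{t=r}^{k}\binom{t}{r}\ssm(\Sigma^t)$, matching the previous line.

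Part (\ref{PPthmket}) is then pure linear algebra: each formula of (\ref{PPthmegy}) expresses the family $\{\Phi^s_{k,n}\}$ as a unitriangular combination of a family of SSM classes, hence it can be inverted. I would verify the two binomial inversions $\sum_s(-1)^{s-r}\binom{s-1}{r-1}\binom{t-1}{s-1}=\delta_{rt}$ and $\sum_s(-1)^{s-r}\binom{s}{r}\binom{t}{s}=\delta_{rt}$ (both reduce, via $\binom{b}{a}\binom{a}{c}=\binom{b}{c}\binom{b-c}{a-c}$, to $(1-1)^{t-r}$), which give respectively $\ssm(\overline{\Sigma^r})=\sum_{s=r}^{k}(-1)^{s-r}\binom{s-1}{r-1}\Phi^s_{k,n}$ and $\ssm(\Sigma^r)=\sum_{s=r}^{k}(-1)^{s-r}\binom{s}{r}\Phi^s_{k,n}$. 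Alternatively the second follows from the first through $\ssm(\Sigma^r)=\ssm(\overline{\Sigma^r})-\ssm(\overline{\Sigma^{r+1}})$.

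I expect the only genuinely delicate point to be the first step: checking that property (\ref{ielore}) is legitimately invoked here (properness of $\eta_r$, smoothness of both spaces, and the equivariant statement), and being careful with the combinatorial bookkeeping when the constructible function $\sum_a a\,\E_{M_a}$ is re-expressed as $\sum_{s=r}^{k}\binom{s}{r}\E_{\Sigma^s}$, including the vanishing of the contribution for $s<r$. Everything after that is formal manipulation of binomial coefficients.
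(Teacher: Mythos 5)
Your proposal is correct and follows essentially the same route as the paper: apply property (\ref{ielore}) to $\eta_r$, compute the fiber Euler characteristics $\chi(\Gr_r(\C^s))=\binom{s}{r}$, divide by $c(\Hom(\C^k,\C^n))$, and invert the resulting unitriangular binomial relations. The only cosmetic differences are that you obtain the second equality of part (\ref{PPthmegy}) by a hockey-stick summation swap where the paper uses Abel summation with $\csm(\Sigma^s)=\csm(\overline{\Sigma^s})-\csm(\overline{\Sigma^{s+1}})$, and you verify the Pascal-matrix inversion directly where the paper cites a reference.
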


\begin{proof}
The preimage at $\eta_r$ of a point in $\Sigma^s$ ($r\leq s \leq k$) is $\Gr_r(\C^s)$ whose Euler characteristic is~$\binom{s}{r}$. Hence property (\ref{ielore}) from
Section \ref{sec:introCSM} for $\eta_r$ implies
\[
\eta_{r!}( c(T\tilde{\Sigma^r}) ) = \sum_{s=r}^k \binom{s}{r} \csm(\Sigma^s).
\]
Dividing both sides by $c(\Hom(\C^k,\C^{n}))$ we obtain $\Phi_{k,n}^r$ on the left hand side, and $\sum_{s=r}^k \binom{s}{r} \ssm(\Sigma^s)$ on the right hand side, which
proves the first equality in part (\ref{PPthmegy}).
Using the additivity property of CSM classes, (\ref{iadd}) from Section \ref{sec:introCSM}, we obtain
\[
\sum_{s=r}^k \binom{s}{r} \csm(\Sigma^s)=
\sum_{s=r}^k \binom{s}{r} ( \csm(\overline{\Sigma^s}) - \csm(\overline{\Sigma^{s+1}}))=
\hskip 4 true cm \ \]
\[
\ \hskip 4 true cm
\sum_{s=r}^k \left(\binom{s}{r}-\binom{s-1}{r}\right) \csm(\overline{\Sigma^s})=
\sum_{s=r}^k \binom{s-1}{r-1} \csm(\overline{\Sigma^s}).
\]
Dividing by $c(\Hom(\C^k,\C^{n}))$ proves the second equality in part (\ref{PPthmegy}).

Part (\ref{PPthmket}) of the theorem is the algebraic consequence of part (\ref{PPthmegy}); it follows from the fact that the inverse of the Pascal matrix $\begin{pmatrix}
\binom{s}{r} \end{pmatrix}_{s,r}$ is the matrix $\begin{pmatrix} (-1)^{s-r} \binom{s}{r} \end{pmatrix}_{s,r}$, see e.g. \cite{CallVelleman}.
\end{proof}

\subsection{Formulas for $\Phi$-classes}
Let $\alpha_u$, $u=1,\ldots,k$ and $\beta_v$, $v=1,\ldots,n$ denote the Chern roots of $GL_k(\C)$ and $GL_{n}(\C)$ respectively. Then
\[
\Phi_{k,n}^s \in \C[\alpha_1,\ldots,\alpha_k,\beta_1,\ldots,\beta_{n}]^{S_k\times S_{n}}
\]
Denoting the Chern roots of the tautological subbundle over $\Gr_s(\C^k)$ by $\gamma_1,\ldots,
\gamma_s$, and the Chern roots of the tautological quotient bundle by $\delta_1,\ldots, \delta_{k-s}$  Proposition \ref{prop:resH} implies
\begin{equation}\label{eq:intPhi}
\Phi^s_{k,n}(\alpha_1,\ldots,\alpha_k;\beta_1,\ldots,\beta_n)=
\int_{\Gr_s(\C^k)} \prod_{i=1}^s \prod_{v=1}^{n} \frac{\beta_v-\gamma_i}{1+\beta_v-\gamma_i}
\prod_{i=1}^s\prod_{j=1}^{k-s} (1+\delta_j-\gamma_i).
\end{equation}

First let us calculate a special case, $s=k$. We have
\[
\Phi^s_{s,s+l}=\prod_{u=1}^s\prod_{v=1}^{s+l}\frac{\beta_v-\alpha_u}{1+\beta_v-\alpha_u} =
\prod_{u=1}^s\prod_{v=1}^{s+l} (\beta_v-\alpha_u)
\sum_{l(\mu)\leq s} \sum_{l(\nu)\leq s} (-1)^{|\mu|+|\nu|} D_{\mu,\nu}^{s,s+l}
\rho^{s,0}(\Sc_\mu)
\rho^{0,s+l}(\Sc_{\nu^T}).
\]
Here we used the Schur function expansion of ``Segre classes of a tensor product'' from \cite{LaLaT}.
Using the ``factorization formula'' of Schur functions we obtain
\begin{equation} \label{eqn:specPhi}
\Phi^s_{s,s+l} =
\rho^{s,s+1}
\left(
\sum_{l(\mu)\leq s} \sum_{l(\nu)\leq s} (-1)^{|\mu|+|\nu|} D_{\mu,\nu}^{s,s+l}
\Sc_{(s+l)^s+\mu,\nu^T}\right).
\end{equation}

\begin{lemma}[Supersymmetry lemma] \label{lem:ss}
We have
\[
\Phi^s_{k+1,n+1}(\alpha_1,\ldots,\alpha_k,t;\beta_1,\ldots,\beta_n,t)=
\Phi^s_{k,n}(\alpha_1,\ldots,\alpha_k;\beta_1,\ldots,\beta_n).
\]
\end{lemma}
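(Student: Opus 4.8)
The plan is to deduce the identity directly from the integral presentation~(\ref{eq:intPhi}) by applying the Atiyah--Bott (Berline--Vergne) localization formula on the Grassmannian. Fix the standard maximal torus acting on $\C^k$ with weights $\alpha_1,\ldots,\alpha_k$; its fixed points on $\Gr_s(\C^k)$ are the coordinate subspaces $p_I$ indexed by the $s$-element subsets $I\subset\{1,\ldots,k\}$. At $p_I$ the tautological sub- and quotient bundles restrict to the weights $\{\alpha_i:i\in I\}$ and $\{\alpha_j:j\notin I\}$ respectively, and the tangent space $\Hom(S,Q)$ has weights $\alpha_j-\alpha_i$ for $i\in I,\ j\notin I$, so its Euler class is $\prod_{i\in I}\prod_{j\notin I}(\alpha_j-\alpha_i)$. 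All of this takes place in the completion of $H^*(BT)$, which is harmless because the integrand of~(\ref{eq:intPhi}) only involves the invertible factors $1+\beta_v-\gamma_i$.

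First I would record the resulting fixed-point expansion
\[
\Phi^s_{k,n}=\sum_{|I|=s,\ I\subset\{1,\ldots,k\}}\ \prod_{i\in I}\frac{\prod_{v=1}^{n}(\beta_v-\alpha_i)}{\prod_{v=1}^{n}(1+\beta_v-\alpha_i)}\ \cdot\ \prod_{i\in I}\prod_{j\notin I}\frac{1+\alpha_j-\alpha_i}{\alpha_j-\alpha_i},
\]
together with its counterpart for $\Phi^s_{k+1,n+1}$, in which $I$ now ranges over the $s$-subsets of $\{1,\ldots,k+1\}$, the $v$-products run to $n+1$, and $j$ runs over $\{1,\ldots,k+1\}\setminus I$. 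Into the latter I substitute $\alpha_{k+1}=t$ and $\beta_{n+1}=t$, which is legitimate termwise because no denominator $\alpha_j-\alpha_i$ becomes identically zero on this locus, and I split the sum according to whether $k+1\in I$.

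If $k+1\in I$, then $p_I$ carries a tautological-subbundle weight equal to $t$, so after the substitution the numerator factor indexed by $i=k+1,\ v=n+1$ is $(\beta_{n+1}-\alpha_{k+1})/(1+\beta_{n+1}-\alpha_{k+1})=0/1=0$; the denominator of that term, $\prod_{i\in I}\prod_{j\notin I}(\alpha_j-\alpha_i)$, specializes to a nonzero rational function (all of its factors have the form $\alpha_j-t$ with $j\le k$ or $\alpha_j-\alpha_i$ with $i\le k$), so the term is a genuine zero, not an indeterminate $0/0$. If $k+1\notin I$, then $I\subset\{1,\ldots,k\}$, and comparing with the $I$-th term of the expansion of $\Phi^s_{k,n}$ the only extra factors are $\prod_{i\in I}(\beta_{n+1}-\alpha_i)/(1+\beta_{n+1}-\alpha_i)$, coming from $v=n+1$, and $\prod_{i\in I}(1+\alpha_{k+1}-\alpha_i)/(\alpha_{k+1}-\alpha_i)$, coming from $j=k+1$ in numerator and denominator; after the substitution these become $\prod_{i\in I}(t-\alpha_i)/(1+t-\alpha_i)$ and $\prod_{i\in I}(1+t-\alpha_i)/(t-\alpha_i)$, whose product is $1$. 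Hence each surviving term of $\Phi^s_{k+1,n+1}(\alpha_1,\ldots,\alpha_k,t;\beta_1,\ldots,\beta_n,t)$ equals the corresponding term of $\Phi^s_{k,n}(\alpha_1,\ldots,\alpha_k;\beta_1,\ldots,\beta_n)$, and summing over $I$ proves the lemma.

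There is no deep obstacle here; the one point requiring care is the bookkeeping about the ambient ring and the specialization: $\Phi^s_{k,n}$ is a power series (it is a combination of $\ssm$ classes), the localization identity holds in the fraction field of the completed equivariant cohomology, and one must check both that the termwise specialization $\alpha_{k+1}=\beta_{n+1}$ keeps all the $\alpha_j-\alpha_i$ denominators nondegenerate and that the vanishing in the $k+1\in I$ case is a true zero. Conceptually the lemma is just the familiar cancellation property of a function that is supersymmetric in the pair of alphabets $(\alpha)$ and $(\beta)$; the localization computation above is merely the most direct way to see this starting from~(\ref{eq:intPhi}).
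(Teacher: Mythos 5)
Your proposal is correct and follows essentially the same route as the paper: the paper's proof consists precisely of writing $\Phi^s_{k,n}$ via equivariant localization on $\Gr_s(\C^k)$ as the fixed-point sum $\sum_I \prod_{u\in I}\prod_{v=1}^{n}\frac{\beta_v-\alpha_u}{1+\beta_v-\alpha_u}\prod_{u\in I}\prod_{w\in \bar I}\frac{1+\alpha_w-\alpha_u}{\alpha_w-\alpha_u}$ and observing that the identity follows. You have merely spelled out the termwise verification (vanishing when $k+1\in I$, cancellation of the extra factors when $k+1\notin I$) that the paper leaves implicit.
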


\begin{proof}
The equality follows from interpreting both sides with equivariant localization
\[
\Phi^s_{k,n}=
\sum_I
\prod_{u\in I} \prod_{v=1}^{n} \frac{\beta_v-\alpha_u}{1+\beta_v-\alpha_u}
\prod_{u\in I} \prod_{w\in \bar{I}} \frac{1+\alpha_w-\alpha_u}{\alpha_w-\alpha_u}
\]
where the summation runs for $s$-elments subsets $I$ of $\{1,\ldots,k\}$, and $\bar{I}=\{1,\ldots,k\}-I$.
\end{proof}


\begin{definition} For $k\leq n$ non-negative integers, $l=n-k$, and $1\leq s \leq k$ define
\[
\F^s_{k,n}=\prod_{i=1}^s \left( \frac{z_i}{1+z_i} \right)^{s+n-k}
 \prod_{j=s+1}^k\prod_{i=1}^s\frac{1+z_i-z_j}{1+z_i},
\]
\[
\F^s_{l}=\F^s_{\infty,\infty+l}=\prod_{i=1}^s \left( \frac{z_i}{1+z_i} \right)^{s+l}
 \prod_{j=s+1}^\infty\prod_{i=1}^s\frac{1+z_i-z_j}{1+z_i}.
\]
\end{definition}

\begin{proposition} \label{prop:PhiS1}
For $k\leq n$, $l=n-k$, and $1\leq s \leq k$ we have
\[
\Phi_{k,n,\beta=0}^s =\SSS^{k,0}_{z_1,\ldots,z_k}\left( \F^s_{k,n} \right)
       =\SSS^{k,0}_{z_1,\ldots}\left( \F^s_{l} \right).
\]
\end{proposition}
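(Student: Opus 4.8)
The plan is to compute $\Phi^s_{k,n,\beta=0}$ geometrically from the fibered resolution (\ref{fig:PPP}), convert the resulting integral over a Grassmannian into an iterated residue of exactly the shape produced by Proposition \ref{prop:RESS}, and then pass from finitely to infinitely many variables via Lemma \ref{lem:stab}.

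First I would set $\beta_v=0$ ($v=1,\dots,n$) in the integral formula (\ref{eq:intPhi}). Since the Chern roots of $S^{*}\otimes Q=T\Gr_s(\C^k)$ are the differences $\delta_j-\gamma_i$, the last product in (\ref{eq:intPhi}) is $c(T\Gr_s(\C^k))$, and thus
\[
\Phi^s_{k,n,\beta=0}=\int_{\Gr_s(\C^k)}\ \prod_{i=1}^s\Bigl(\frac{-\gamma_i}{1-\gamma_i}\Bigr)^{\!n}\ c\bigl(T\Gr_s(\C^k)\bigr).
\]
Equivariant localization -- exactly as used in the proof of Lemma \ref{lem:ss} -- evaluates this over the torus-fixed coordinate subspaces $\langle e_u:u\in I\rangle$, $I\subset\{1,\dots,k\}$, $|I|=s$, at which $\gamma_i\mapsto\{\alpha_u:u\in I\}$ and the tangent Euler class is $\prod_{u\in I}\prod_{w\notin I}(\alpha_w-\alpha_u)$, giving the finite sum
\[
\Phi^s_{k,n,\beta=0}=\sum_{|I|=s}\ \prod_{u\in I}\Bigl(\frac{-\alpha_u}{1-\alpha_u}\Bigr)^{\!n}\ \prod_{u\in I}\prod_{w\notin I}\frac{1+\alpha_w-\alpha_u}{\alpha_w-\alpha_u}.
\]

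The heart of the argument is to recognize this fixed-point sum as an iterated residue. I would introduce $k$ auxiliary variables $z_1,\dots,z_k$, letting $z_1,\dots,z_s$ carry the elements of $I$ and $z_{s+1},\dots,z_k$ the complement, so that the off-diagonal numerator $\prod_{u\in I,w\notin I}(1+\alpha_w-\alpha_u)$ is recorded by the factor $\prod_{j=s+1}^k\prod_{i=1}^s(1+z_i-z_j)$ appearing in $\F^s_{k,n}$. The standard residue device -- the sum of all residues of a rational $1$-form on $\PP^1$ is zero, which is also the engine of the proofs of Theorem \ref{thm:Wres} and Lemma \ref{lem:schur} -- then rewrites the fixed-point sum (taken degree by degree, so that the formal residue calculus around Proposition \ref{prop:RESS} applies) as $(-1)^k$ times
\[
\RES_k\left(\F^s_{k,n}(\zz)\ \prod_{1\le i<j\le k}\Bigl(1-\frac{z_i}{z_j}\Bigr)\ \prod_{i=1}^k\frac{1}{\prod_{u=1}^k(1+\alpha_u/z_i)}\ \prod_{i=1}^k\frac{dz_i}{z_i}\right),
\]
which by Proposition \ref{prop:RESS} is precisely $\SSS^{k,0}_{z_1,\dots,z_k}(\F^s_{k,n})$. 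This proves the first equality. I expect this to be the main obstacle: one has to match the differential form coming out of the fixed-point sum against the exact normalization of Proposition \ref{prop:RESS}, and the exponent $s+n-k$ on $z_i/(1+z_i)$ (rather than $n$) and the overall sign are delicate -- they arise from the degree shift and sign hidden in $\rho^{k,0}$, recall $\rho^{k,0}(\Sc_\mu)=(-1)^{|\mu|}s_\mu(\alpha_1,\dots,\alpha_k)$, together with the passage from residues at the finite poles $z_i=-\alpha_u$ to residues at infinity.

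For the second equality, note that $s+n-k=s+l$ with $l=n-k$, whence $\F^s_l=\F^s_{k,n}\cdot\prod_{j=k+1}^{\infty}\prod_{i=1}^s\frac{1+z_i-z_j}{1+z_i}$, and for each single index $j>k$ the factor $\prod_{i=1}^s\frac{1+z_i-z_j}{1+z_i}$ takes the value $1$ at $z_j=0$. Applying Lemma \ref{lem:stab} successively for $z_{k+1},z_{k+2},\dots$ (as in Section \ref{sec:incrgenfn}), the Schur functions of length at most $k$ occurring in $\SSS_{z_1,\dots,z_N}\bigl(\F^s_{k,n}\prod_{j=k+1}^N\prod_{i=1}^s\tfrac{1+z_i-z_j}{1+z_i}\bigr)$ contribute exactly $\SSS_{z_1,\dots,z_k}(\F^s_{k,n})$ for every $N>k$; since $\rho^{k,0}$ annihilates every $\Sc_\mu$ with more than $k$ parts, letting $N\to\infty$ gives $\SSS^{k,0}_{z_1,\dots}(\F^s_l)=\SSS^{k,0}_{z_1,\dots,z_k}(\F^s_{k,n})$, as claimed.
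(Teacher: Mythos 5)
Your proposal is correct and follows essentially the same route as the paper: substitute $\beta=0$ in (\ref{eq:intPhi}), evaluate the Grassmannian integral by equivariant localization over the coordinate subspaces, recognize the fixed-point sum as the sum of all finite residues of the differential form built from $\F^s_{k,n}$ (the paper makes this precise via the $U\to V$ symmetrization trick, as in Theorem \ref{thm:Wres}), convert to residues at infinity and invoke Proposition \ref{prop:RESS}, then obtain the second equality from Lemma \ref{lem:stab}. The one step you leave informal---matching the fixed-point sum to the exact normalization of the residue form---is exactly the step the paper carries out explicitly, and your identification of it as the delicate point is accurate.
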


\begin{proof}
Let
\[
U=\prod_{u=1}^s \left(\frac{-\alpha_u}{1-\alpha_u}\right)^n
\prod_{w=s+1}^k \prod_{u=1}^s \frac{1+\alpha_w-\alpha_u}{\alpha_w-\alpha_u}.
\]
Substituting $\beta_i=0$ for $i=1,\ldots,n$ in (\ref{eq:intPhi}) the equivariant localization formula for the integral yields
\[
\Phi_{k,n,\beta=0}^s=\sum_{\sigma\in S_k/S_s\times S_{k-s}}  U(\sigma(\alpha_1,\ldots,\alpha_k)).
\]
For
\[
V=U\cdot
\left(\prod_{u=1}^s (-\alpha_u)^{s-u}\prod_{w=1}^s\prod_{u=1}^{w-1}\frac{1}{\alpha_w-\alpha_u}\right)
\left(\prod_{u=s+1}^k (-\alpha_u)^{k-u}\prod_{w=s+1}^k\prod_{u=s+1}^{w-1}\frac{1}{\alpha_w-\alpha_u}\right)
\]
we have
\[
\sum_{\sigma\in S_s \times S_{k-s}} V(\sigma(\alpha_1,\ldots,\alpha_k))=U,
\]
and hence
\begin{equation} \label{eq:po1}
\Phi^s_{k,n,\beta=0}=\sum_{\sigma\in S_k} V(\sigma(\alpha_1,\ldots,\alpha_k)).
\end{equation}
Observe that
\begin{equation}\label{eq:po2}
V(\sigma(\alpha_1,\ldots,\alpha_k))
=\Res{z_k=-\alpha_{\sigma(k)}}\ldots \Res{z_2=-\alpha_{\sigma(2)}} \Res{z_1=-\alpha_{\sigma(1)}} f
\end{equation}
for
\begin{align*}
f&=
\prod_{i=1}^s \left( \frac{z_i}{1+z_i}\right)^n \prod_{j=s+1}^k\prod_{i=1}^s (1+z_i-z_j) \prod_{i=1}^s z_i^{s-i} \prod_{i=s+1}^k z_i^{k-i}
\frac{\prod_{1\leq j \leq i \leq k} (z_i-z_j)}{\prod_{j=1}^k \prod_{u=1}^k (z_j +\alpha_u)} \\
 &=
F_{k,n}^s \cdot \prod_{1\leq i < j \leq k} \left( 1-\frac{z_i}{z_j} \right)
\prod_{i=1}^k \frac{1}{\prod_{u=1}^k (1+\alpha_u/z_i)} \prod_{i=1}^k \frac{dz_i}{z_i}.
\end{align*}
The only non-zero finite residues of the form $f$ are the ones on the right hand side of (\ref{eq:po2})---remember that the $1/(1+z_i)$ factors are just abbreviations of the
formal series $1-z_1+z_1^2-\ldots$. Hence from (\ref{eq:po1}) and (\ref{eq:po2}), using the Residue Theorem, we obtain that
\[
\Phi_{k,n,\beta=0}^s=(-1)^k \RES_k \left( f\right),
\]
which, using (\ref{eqn:SchurRES2}) yields the first equality of the proposition.

The second equality follows from the first one and Lemma \ref{lem:stab}.
\end{proof}

\begin{theorem} \label{thm:PhiS2}
For $k\leq n$, $l=n-k$, and $1\leq s \leq k$ we have
\[
\Phi_{k,n}^s =\SSS^{k,n}_{z_1,\ldots}\left( \F^s_l \right).
\]
\end{theorem}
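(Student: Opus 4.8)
The plan is to upgrade Proposition~\ref{prop:PhiS1} from the $\beta=0$ specialization to the full $GL_k(\C)\times GL_n(\C)$-equivariant statement. I would first record what Proposition~\ref{prop:PhiS1} already gives: $\Phi^s_{k,n,\beta=0}=\SSS^{k,0}_{z_1,\ldots}(\F^s_l)$, and observe that by the definition $\SSS^{k,n}_{\zz}(f)=\rho^{k,n}(\SSS_{\zz}(f))$, the class $\SSS^{k,n}_{z_1,\ldots}(\F^s_l)$ restricts (upon $\beta_v=0$) to $\SSS^{k,0}_{z_1,\ldots}(\F^s_l)$, hence agrees with $\Phi^s_{k,n,\beta=0}$. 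So both $\Phi^s_{k,n}$ and $\SSS^{k,n}_{z_1,\ldots}(\F^s_l)$ are polynomials in $\C[\aalpha,\bbeta]^{S_k\times S_n}$ whose $\beta=0$ specializations coincide; I need an argument that promotes this to full equality.

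The key device is the Supersymmetry Lemma~\ref{lem:ss}: $\Phi^s_{k+1,n+1}(\aalpha,t;\bbeta,t)=\Phi^s_{k,n}(\aalpha;\bbeta)$. I would check that the right-hand side $\SSS^{k,n}_{z_1,\ldots}(\F^s_l)$ enjoys the same supersymmetry. This is pure symmetric-function bookkeeping: the generating series $\F^s_l$ does not depend on $k$ or $n$ at all, only on $s$ and $l=n-k$, and the substitution $\rho^{k,n}$ is built from $\prod(1+\beta_u t)/\prod(1+\alpha_u t)$, so adding a pair $(\alpha_{k+1},\beta_{n+1})=(t,t)$ leaves $\rho^{k+1,n+1}$ applied to any fixed power series equal to $\rho^{k,n}$ of it; thus $\SSS^{k+1,n+1}_{z_1,\ldots}(\F^s_l)(\aalpha,t;\bbeta,t)=\SSS^{k,n}_{z_1,\ldots}(\F^s_l)(\aalpha;\bbeta)$. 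Therefore both sides of the claimed identity are supersymmetric in the sense of Lemma~\ref{lem:ss}. Now I invoke the standard fact that a supersymmetric polynomial (one invariant under $(\alpha_{k+1},\beta_{n+1})\mapsto(t,t)$ deletion) is determined by its $\beta=0$ specialization in sufficiently many $\alpha$-variables; concretely, iterating the supersymmetry substitution $t\mapsto 0$ reduces $\Phi^s_{k,n}(\aalpha;\bbeta)$ with $\bbeta$ arbitrary to $\Phi^s_{k',0}$ with $k'=k$ and extra zero $\alpha$'s killed, matching $\SSS^{k,n}_{\zz}(\F^s_l)$ the same way. More cleanly: set $\beta_v\mapsto$ indeterminates and use that $\Phi^s_{k,n}-\SSS^{k,n}_{\zz}(\F^s_l)$ is a supersymmetric polynomial whose $\beta_1=\cdots=\beta_n=0$ part vanishes; a supersymmetric polynomial vanishing on this locus is identically zero (it lies in the ideal generated by $p_1(\bbeta)-$ lower terms, but supersymmetry forces it to zero), which finishes the proof.

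Concretely, the steps I would carry out in order are: (1) cite Proposition~\ref{prop:PhiS1} to get the $\beta=0$ identity; (2) verify that the right-hand side $\SSS^{k,n}_{z_1,\ldots}(\F^s_l)$ has its $\beta=0$ specialization equal to $\SSS^{k,0}_{z_1,\ldots}(\F^s_l)$, simply because $\rho^{k,0}$ is $\rho^{k,n}$ followed by $\beta=0$; (3) prove the right-hand side satisfies the supersymmetry relation of Lemma~\ref{lem:ss}, using that $\F^s_l$ is independent of $k,n$ and the multiplicativity of $\rho$; (4) combine (3) for both sides with Lemma~\ref{lem:ss} to see the difference $D:=\Phi^s_{k,n}-\SSS^{k,n}_{\zz}(\F^s_l)$ is supersymmetric; (5) invoke the supersymmetry + vanishing-on-$\beta=0$ $\Rightarrow$ zero principle, citing the localization form of $\Phi^s_{k,n}$ in the proof of Lemma~\ref{lem:ss} if an explicit argument is wanted (one can also argue that a symmetric polynomial in $\bbeta$ with coefficients symmetric polynomials in $\aalpha$ that is supersymmetric and has zero constant-$\bbeta$ term has all coefficients zero by downward induction on the $\bbeta$-degree, peeling off one $\beta_{n}=t=\alpha_{k+1}$ at a time).

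The main obstacle I anticipate is step~(5): making the ``supersymmetric and vanishing at $\beta=0$ implies zero'' principle airtight in the equivariant-cohomology ring $\C[\aalpha,\bbeta]^{S_k\times S_n}$, since one must be careful that deleting a variable pair and then specializing the remaining $\beta$'s to $0$ genuinely recovers enough information — i.e.\ that the map sending a supersymmetric family $\{f_{k,n}\}$ to its stable $\beta=0$ value is injective. This is essentially the statement that supersymmetric functions are freely generated by the power sums $p_r(\bbeta)-p_r(\aalpha)$, so that $f_{k,n}=0$ follows from $f_{k,0}=0$ for all $k$; I would either cite this classical fact about supersymmetric (hook-Schur) functions or give the short inductive argument sketched above. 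Everything else is routine: the generating-series manipulations are already packaged in Proposition~\ref{prop:RESS} and the definition of $\SSS^{k,n}$, and the supersymmetry of the $\Phi$-side is Lemma~\ref{lem:ss} verbatim.
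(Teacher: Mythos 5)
Your proposal follows essentially the same route as the paper's proof: both combine Proposition~\ref{prop:PhiS1} with the Supersymmetry Lemma~\ref{lem:ss} and the principle that a supersymmetric \emph{family} is determined by the $\beta=0$ specializations of its members in sufficiently many variables. The one point to make airtight --- which you correctly single out as your step (5) --- is that the fixed-$(k,n)$ statement ``supersymmetric and vanishing at $\beta=0$ implies zero'' is \emph{false} (e.g.\ $\rho^{1,2}(\Sc_{11})=\beta_1^2+\beta_1\beta_2+\beta_2^2-(\beta_1+\beta_2)\alpha_1$ is a nonzero supersymmetric polynomial vanishing at $\beta=0$, because $\rho^{k,0}$ kills more Schur functions than $\rho^{k,n}$); the paper's fix is precisely the ``sufficiently many variables'' move you anticipate: pass to $\Phi^s_{k+N,n+N}$, use Proposition~\ref{prop:PhiS1} there to pin down all Schur coefficients of partitions of length at most $k+N$, descend by supersymmetry, and let $N\to\infty$.
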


\begin{proof}
Let $N$ be a non-negative integer and consider $\Phi^s_{k+N,n+N}$. Suppersymmetry (Lemma \ref{lem:ss}) implies that this can be written as a linear combination of Schur
functions
$\rho^{k+N,n+N}(\Sc_\lambda)$. Recall from Section \ref{sec:incrgenfn} that for such linear combinations $f$, the notation $f^{\leq m}$ is meant to be the sum of terms
corresponding to partition with length at most $m$.
By Proposition \ref{prop:PhiS1}
\[\Phi^s_{k+N,n+N}(\alpha_1,\ldots,\alpha_{k+N};0,\ldots,0)=\SSS_{z_1,\ldots}^{k+N,0}(\F_l^s).
\]
Since $\rho^{k+N,0}(\Sc_\lambda)=0$ if and only if $\ell(\lambda)>k+N$ we have that
\[
\left(
\Phi_{k+N,n+N}^s
\right)^{\leq k+N} = \left( \SSS_{z_1,\ldots}^{k+N,n+N}(\F_l^s) \right)^{\leq k+N}.
\]
Substituting $\alpha_{k+1}=\alpha_{k+2}=\ldots=\alpha_{k+N}=\beta_{n+1}=\beta_{n+2}=\ldots=\beta_{n+N}=0$, and using the Suppersymmetry Lemma \ref{lem:ss} we get
\[
\left( \Phi_{k,n}^s \right)^{\leq k+N} = \left( \SSS_{z_1,\ldots}^{k,n}( \F_l^s) \right)^{\leq k+N}.
\]
Since this holds for any $N$, the proof is complete.
\end{proof}

\begin{theorem} \label{thm:Dexp}
 For $k\leq n$, $l=n-k$, $0\leq s \leq k$ we have
\begin{equation}
\Phi^s_{k,n}=\rho^{k,n}\left( \sum_{l(\mu)\leq s} \sum_{l(\nu)\leq s} (-1)^{|\mu|+|\nu|} D^{s,s+l}_{\mu,\nu} \Sc_{(s+l)^s+\mu,\nu^T}\right),
\end{equation}
\end{theorem}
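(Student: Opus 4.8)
The plan is to prove Theorem~\ref{thm:Dexp} by reducing the general $\Phi^s_{k,n}$ to the ``diagonal'' case $\Phi^s_{s,s+l}$, where the formula is already established as equation~(\ref{eqn:specPhi}), via the Supersymmetry Lemma~\ref{lem:ss}. First I would record that equation~(\ref{eqn:specPhi}) says exactly
\[
\Phi^s_{s,s+l} = \rho^{s,s+l}\left( \sum_{l(\mu)\leq s} \sum_{l(\nu)\leq s} (-1)^{|\mu|+|\nu|} D^{s,s+l}_{\mu,\nu} \Sc_{(s+l)^s+\mu,\nu^T}\right),
\]
so the target identity is the claim that the same symmetric-function expression, now fed into $\rho^{k,n}$, computes $\Phi^s_{k,n}$ for all $k\geq s$, $n\geq s+l$. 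The natural strategy is: establish that both sides are the $\rho^{k,n}$-images of a single, fixed element of $\C[c_1,c_2,\ldots]$ depending only on $s$ and $l$; then it suffices to check equality after applying $\rho^{k,n}$ for one value of $(k,n)$ with $k,n$ large, and compare with $\Phi^s_{s,s+l}$ through supersymmetric stabilization.

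Concretely, the key steps in order would be the following. (1) Argue that there is a universal class $\PPP \in \C[[c_1,c_2,\ldots]]$ (in fact a polynomial, since $D^{s,s+l}_{\mu,\nu}=0$ once $\mu$ or $\nu$ is large enough that the binomial determinant vanishes — actually it is a genuine infinite sum, but grouped by degree it is finite in each degree) with $\Phi^s_{k,n} = \rho^{k,n}(\PPP)$ for all $k\leq n$; this follows from Theorem~\ref{thm:PhiS2}, since $\Phi^s_{k,n} = \SSS^{k,n}_{z_1,\ldots}(\F^s_l) = \rho^{k,n}\big(\SSS_{z_1,\ldots}(\F^s_l)\big)$, so one may take $\PPP = \SSS_{z_1,\ldots}(\F^s_l)$. (2) Let $\mathcal Q = \sum_{l(\mu)\leq s}\sum_{l(\nu)\leq s}(-1)^{|\mu|+|\nu|} D^{s,s+l}_{\mu,\nu}\,\Sc_{(s+l)^s+\mu,\nu^T}\in\C[c_1,c_2,\ldots]$ be the candidate. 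By (\ref{eqn:specPhi}), $\rho^{s,s+l}(\mathcal Q) = \Phi^s_{s,s+l} = \rho^{s,s+l}(\PPP)$. (3) Now I would use the fact, recalled in the proof of Theorem~\ref{thm:main}, that $\ker\rho^{k,n}=\spa\{\Sc_\lambda : \lambda_{k+1}\geq n+1\}$ — i.e. $\rho^{k,n}$ is injective on the span of $\Sc_\lambda$ with $\ell(\lambda)\leq k$ and first column bounded appropriately — together with supersymmetry. More precisely: the partitions indexing $\mathcal Q$ all have length $\leq s+s = 2s$ (length of $(s+l)^s+\mu$ is $\leq s$, plus length of $\nu^T$ which is $\nu_1\leq$ whatever, but bounded in each fixed degree), so for $k\geq $ that bound, $\rho^{k,n}$ is injective on the relevant finite-degree piece. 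Since $\rho^{s,s+l}(\PPP-\mathcal Q)=0$, and the Supersymmetry Lemma gives $\rho^{k,n}(\PPP) = \rho^{s,s+l}(\PPP)$ after the substitution $\alpha_{s+1}=\cdots=\alpha_k=\beta_{s+l+1}=\cdots=\beta_n=0$ applied degree-by-degree (this is the content of the stabilization argument in the proof of Theorem~\ref{thm:PhiS2}), I conclude $\PPP = \mathcal Q$ as formal power series, hence $\Phi^s_{k,n}=\rho^{k,n}(\mathcal Q)$ for all $k\leq n$.

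An alternative, slightly cleaner route avoids talking about a universal class: apply the argument of Theorem~\ref{thm:PhiS2} directly. By Lemma~\ref{lem:ss}, $\Phi^s_{k+N,n+N}$ with its $\alpha,\beta$ variables past index $k$ (resp.\ $n$) set to $0$ recovers $\Phi^s_{k,n}$; and $\Phi^s_{k+N,n+N}$, being the pushforward of a Chern class over a Grassmannian with tautological bundles, is a $\rho^{k+N,n+N}$-image of a Schur-polynomial combination. Choosing $N$ large, and invoking $\ker\rho^{m,m+l}=\spa\{\Sc_\lambda:\ell(\lambda)>m\}$, one sees that the $\leq(k+N)$-length truncations of $\Phi^s_{k,n}$ and of $\rho^{k,n}(\mathcal Q)$ agree because they agree in the $(s,s+l)$ case (\ref{eqn:specPhi}) and both stabilize under supersymmetric specialization. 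Letting $N\to\infty$ finishes it.

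The main obstacle I expect is the bookkeeping in step (3): one must be careful that the Schur functions $\Sc_{(s+l)^s+\mu,\nu^T}$ appearing in $\mathcal Q$ genuinely have length (number of parts) controlled well enough that $\rho^{k,n}$ and $\rho^{s,s+l}$ detect them faithfully in each fixed degree, and that the supersymmetric specialization $\rho^{k,n}\to\rho^{s,s+l}$ really is the identity on the subspace spanned by these particular Schur functions (not just formally on symmetric functions). This requires unwinding the ``factorization formula'' for Schur functions that was used to pass from the product $\prod(\beta_v-\alpha_u)$ times two separate Schur expansions to the single $\Sc_{(s+l)^s+\mu,\nu^T}$ in (\ref{eqn:specPhi}) — i.e.\ checking $\rho^{s,s+l}(\Sc_{(s+l)^s+\mu,\nu^T}) = \prod_{u=1}^s\prod_{v=1}^{s+l}(\beta_v-\alpha_u)\cdot\rho^{s,0}(\Sc_\mu)\cdot\rho^{0,s+l}(\Sc_{\nu^T})$ — and then confirming that the identical symmetric-function identity, fed into the larger $\rho^{k,n}$, is compatible with the localization formula for $\Phi^s_{k,n}$ from the proof of Lemma~\ref{lem:ss}. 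Everything else (nonnegativity $D^{s,s+l}_{\mu,\nu}\geq 0$ is already handled by Lindström–Gessel–Viennot, the Segre-class expansion of \cite{LaLaT} is quoted) is either cited or routine.
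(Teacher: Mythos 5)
Your overall skeleton is the same as the paper's: both sides are $\rho^{k,n}$-images of universal symmetric functions ($\PPP=\SSS_{z_1,\ldots}(\Fssm)$-type class from Theorem~\ref{thm:PhiS2} on one side, the explicit candidate $\mathcal{Q}$ on the other), one specializes to $(k,n)=(s,s+l)$ via the Supersymmetry Lemma~\ref{lem:ss}, and there the identity is the already-computed (\ref{eqn:specPhi}). However, your step (3) has a genuine gap. From $\rho^{s,s+l}(\PPP)=\rho^{s,s+l}(\mathcal{Q})$ you may only conclude $\PPP-\mathcal{Q}\in\ker\rho^{s,s+l}=\spa\{\Sc_\lambda:\lambda_{s+1}\geq s+l+1\}$, which is a large space; the conclusion $\PPP=\mathcal{Q}$ does not follow. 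Supersymmetry cannot rescue this: the compatibility $\rho^{k,n}\leadsto\rho^{s,s+l}$ under setting the extra $\alpha$'s and $\beta$'s to zero holds \emph{automatically} for the image of any element of $\C[[c_1,c_2,\ldots]]$ (it is built into the definition (\ref{eqn:defc})), so it supplies no information beyond the single equality at $(s,s+l)$. The direction of implication you need is from the small case up to general $(k,n)$, and that requires an extra input you do not supply. Your fallback bookkeeping is also off: the length of $((s+l)^s+\mu,\nu^T)$ is $s+\nu_1$, which is unbounded, and bounding lengths ``in each fixed degree'' only yields injectivity of $\rho^{k,n}$ for \emph{large} $k,n$, whereas your comparison point is the fixed pair $(s,s+l)$.

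The missing idea is a support bound on the Schur expansion of $\PPP$ itself: one must show that only $\Sc_\lambda$ with $\lambda_{s+1}\leq s$ occur in $\SSS_{z_1,\ldots}(\F^s_l)$. The paper proves this by observing that every monomial in the expansion of $\F^s_l$ has exponent at most $s$ in each of the variables $z_{s+1},z_{s+2},\ldots$, and that the property ``$i>s\Rightarrow\lambda_i\leq s$'' is preserved by the straightening laws (\ref{eqn:str}), hence survives the passage from fake to genuine Schur functions. The candidate $\mathcal{Q}$ satisfies the same bound, since $\big((s+l)^s+\mu,\nu^T\big)_{s+1}=\ell(\nu)\leq s$. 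On the span of $\{\Sc_\lambda:\lambda_{s+1}\leq s\}$ the map $\rho^{s,s+l}$ \emph{is} injective (these $\lambda$ avoid the kernel $\{\lambda_{s+1}\geq s+l+1\}$), so the coefficients $d_\lambda$ can be read off from (\ref{eqn:specPhi}) and the proof closes. Without this support argument your reduction is circular, so you should add it as the central step rather than relegating it to an anticipated ``obstacle.''
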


\begin{proof}
By the Supersymmetry Lemma \ref{lem:ss} we know that
\begin{equation}\label{eqn:ip1}
\Phi^s_{k,n}=\rho^{k,n}\left( \sum d_\lambda \Sc_{\lambda} \right).
\end{equation}
First we claim that if $\lambda_{s+1} > s $ then $d_\lambda=0$.

According to Theorem \ref{thm:PhiS2} we have $\Phi^s_{k,n}=\SSS_{z_1,\ldots}^{k,n}(\F^s_l)$. The monomials occurring in the Taylor expansion of $\F^s_l$ are of the form
\[
z_1^{a_1}\ldots z_s^{a_s} z_{s+1}^{\varepsilon_1} z_{s+2}^{\varepsilon_2} \ldots z_{s+q}^{\varepsilon_q},
\]
with all $\varepsilon_i \in \{0,1,\ldots,s\}$. Hence $\Phi^s_{k,n}$ is a sum of possibly fake Schur functions $\Sc_\lambda$ satisfying
\begin{equation} \label{eqn:partition_property}
i>s \Rightarrow \lambda_i \leq s.
\end{equation}
Observe that property (\ref{eqn:partition_property}) does not change if one applies the straightening laws (\ref{eqn:str}). Hence $\Phi^s_{k,n}$ is also the sum of Schur
functions satisfying (\ref{eqn:partition_property}). For partitions this property is equivalent to $\lambda_{s+1}\leq s$.

We can hence improve (\ref{eqn:ip1}), and write
\begin{equation}\label{eqn:ip2}
\Phi^s_{k,n}=\rho^{k,n}\left( \sum_{\lambda_s\leq s} d_\lambda \Sc_{\lambda} \right).
\end{equation}
Let us substitute $\alpha_{s+1}=\alpha_{s+1}=\ldots=\alpha_k=\beta_{s+l+1}=\beta_{s+l+2}=\ldots=\beta_n=0$ in (\ref{eqn:ip2}). According to the Supersymmetry Lemma
\ref{lem:ss} we obtain
\begin{equation}\label{eqn:ip3}
\Phi^s_{s,s+l}=\rho^{s,s+l}\left( \sum_{\lambda_s\leq s} d_\lambda \Sc_\lambda\right).
\end{equation}
Observe that for a $\lambda$ with $\lambda_s\leq s$ the Schur function $\rho^{k,n}(\Sc_\lambda)$ is not 0. Hence each $d_\lambda$ in (\ref{eqn:ip3}) has to be the value
described in (\ref{eqn:specPhi}). This proves the theorem.
\end{proof}

\section{Appendix B: Comparing CSM and SSM classes of Schubert and matrix Schubert cells}\label{appB}

In this section we summarize the localization and residue formulas for both equivariant CSM and SSM classes of both matrix Schubert cells and Schubert cells.

Matrix Schubert cells are subsets of $\Hom(\C^k,\C^n)$, and their $GL_k\times B^-_n$-equivariant CSM and SSM classes are elements of
$\C[\alpha_1,\ldots,\alpha_k,\beta_1,\ldots,\beta_n]^{S_k}$ (and its completion). Schubert cells are subsets of the Grassmanniann $\Gr_k(\C^n)$, and their
$B_n^-$-equivariant CSM and SSM classes are elements of a {\em quotient} ring of $\C[\alpha_1,\ldots,\alpha_k,\beta_1,\ldots,\beta_n]^{S_k}$.

For matrix Schubert cells let us now restrict our attention to the ones whose elements have full rank $k$. Then
both versions of Schubert cells can be parameterized by partitions $\lambda=(\lambda_1\geq \ldots \geq \lambda_k)$ with $\lambda_1\leq n-k$ and $\lambda_k\geq 0$, or
equivalently, by subsets $\{i_1<i_2<\ldots<i_k\}$ of $\{1,\ldots,n\}$. The transition between the two parameters is $i_a=\lambda_{k+1-a}+a$.

Below we give two formulas for CSM and SSM classes of matrix and ordinary Schubert cells. The first one is the formula for the appropriate class of the given Schubert cell.
The second formula is the Schur polynomial expansion of the $\beta_v=0$ substitution. In matrix Schubert settings this means $GL_k(\C)$-equivariant formulas, and in the
Grassmannian settings this means non-equivariant formulas. Denote $\sym f=\sum_{\sigma\in S_k} f(\alpha_{\sigma(1)},\ldots,\alpha_{\sigma(k)})$.

\begin{theorem} We have the following formulas for matrix and ordinary Schubert cells.
\begin{enumerate}
\item CSM class of a matrix Schubert cell:
\[
\sym
\prod_{u=1}^k
\left( \prod_{v=i_u+1}^n (1+\beta_v-\alpha_u)
 \prod_{v=1}^{i_u-1} (\beta_v-\alpha_u)
 \prod_{v=u+1}^k \frac{1+\alpha_u-\alpha_v}{\alpha_u-\alpha_v}
\right),
\]
\[
\SSS^{k,0}_{\zz}\left(
\prod_{j=1}^k z_j^{\lambda_j} \prod_{j=1}^k (1+z_j)^{n-i_{k+1-j}}
\prod_{1\leq i< j \leq k}(1+z_i-z_j)
\right);
\]
\item
CSM class of a Schubert cell:
\[
\sym
\prod_{u=1}^k
\left( \prod_{v=i_u+1}^n (1+\beta_v-\alpha_u)
 \prod_{v=1}^{i_u-1} (\beta_v-\alpha_u)
 \prod_{v=u+1}^k \frac{1}{(\alpha_u-\alpha_v)(1+\alpha_v-\alpha_u)}
\right),
\]
\[
\SSS^{k,0}_{\zz}\left(
\prod_{j=1}^k z_j^{\lambda_j} \prod_{j=1}^k (1+z_j)^{n-i_{k+1-j}}
\frac{1}{\prod_{1\leq j<i \leq k}(1+z_i-z_j) }
\right);
\]
\item SSM class of a matrix Schubert cell:
\[
\sym
\prod_{u=1}^k
\left(
\frac{1}{1+\beta_{i_u}-\alpha_{u}}
 \prod_{v=1}^{i_u-1} \frac{ \beta_v-\alpha_u}{1+\beta_v-\alpha_u}
 \prod_{v=u+1}^k \frac{1+\alpha_u-\alpha_v}{\alpha_u-\alpha_v}
\right),
\]
\[
\SSS^{k,0}_{\zz}\left(
\prod_{j=1}^k \left(\frac{z_j}{1+z_j}\right)^{\lambda_j} \prod_{j=1}^{k}  \prod_{i=1}^j \frac{1+z_i-z_j}{1+z_i}
\right).
\]
\item The SSM classes of Schubert cells are represented by SSM classes of matrix Schubert cells (hence formulas of (3) are representatives of these SSM classes).
\end{enumerate}
\end{theorem}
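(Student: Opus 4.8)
\medskip

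\noindent\textbf{Proof proposal.}
The plan is to realize $\Gr_k(\C^n)$ as the base of a principal $GL_k(\C)$-bundle carrying the full-rank matrix Schubert cells, and then to transport SSM classes across this bundle by two applications of the transversal-pullback property (\ref{itrans}). Recall that for $\lambda$ with $\lambda_1\le n-k$ the cell $\Omega_{I_\lambda}\subset\Hom(\C^k,\C^n)$ consists of injective (full-rank) maps.

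First I would set up the bundle. Let $U\subset\Hom(\C^k,\C^n)$ be the open invariant locus of injective maps, and let $p\colon U\to\Gr_k(\C^n)$ send $M$ to the span of its columns. The $GL_k(\C)$-factor acts freely on $U$ with quotient map $p$, and $p$ is $GL_k(\C)\times B^-_n$-equivariant if $GL_k(\C)$ acts trivially on the target; hence $p^*$ is an isomorphism $H^*_{B^-_n}(\Gr_k(\C^n))\xrightarrow{\ \sim\ }H^*_{GL_k(\C)\times B^-_n}(U)$. Composing the restriction $H^*_{GL_k(\C)\times B^-_n}(\Hom(\C^k,\C^n))\to H^*_{GL_k(\C)\times B^-_n}(U)$ with $(p^*)^{-1}$ gives the canonical surjection
\[
q\colon\ \C[\alpha_1,\ldots,\alpha_k,\beta_1,\ldots,\beta_n]^{S_k}\ \longrightarrow\ H^*_{B^-_n}(\Gr_k(\C^n))
\]
onto the quotient ring introduced at the start of this section, under which $\alpha_1,\ldots,\alpha_k$ become the Chern roots of the tautological subbundle. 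Moreover $p(\Omega_{I_\lambda})$ is precisely the Schubert cell $X^\circ_\lambda\subset\Gr_k(\C^n)$ indexed by the same $\lambda$ (the matrix-Schubert description of Schubert cells, \cite{ss,km}), and since $p$ has source $U$ this means $p^{-1}(X^\circ_\lambda)=\Omega_{I_\lambda}$.

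Now I would apply (\ref{itrans}) twice. The open inclusion $\iota\colon U\hookrightarrow\Hom(\C^k,\C^n)$ is a submersion, hence transversal to every orbit stratum; writing $\E_{\Omega_{I_\lambda}}$ as an integral combination of indicators of orbit closures and using additivity (\ref{iadd}) this gives $\iota^*\big(\ssm(\Omega_{I_\lambda})\big)=\ssm(\Omega_{I_\lambda}\subset U)$. Likewise $p$ is a submersion, hence transversal to the Schubert strata, so $\ssm\big(p^{-1}(X^\circ_\lambda)\subset U\big)=p^*\big(\ssm(X^\circ_\lambda)\big)$. Since $p^{-1}(X^\circ_\lambda)=\Omega_{I_\lambda}$, chaining the two equalities yields
\[
q\big(\ssm(\Omega_{I_\lambda})\big)=\ssm\big(X^\circ_\lambda\subset\Gr_k(\C^n)\big).
\]
By Corollary \ref{cor:osztva} the class $\ssm(\Omega_{I_\lambda})$ is the expression displayed in part (3); hence that expression, read in the quotient ring, represents $\ssm(X^\circ_\lambda)$, which is the assertion.

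I expect the main obstacle to be purely foundational bookkeeping rather than a deep point: one must identify the composite ``restrict to $U$, then descend along $p$'' with the standard quotient presentation of $H^*_{B^-_n}(\Gr_k(\C^n))$ — i.e. verify that $\alpha_1,\ldots,\alpha_k$ really go to the Chern roots of the tautological subbundle and that the ideal killed is the expected one — and one must be careful that (\ref{itrans}) is stated for subvarieties, so the passage to the locally closed cells $\Omega_{I_\lambda}$ and $X^\circ_\lambda$ is handled via (\ref{iadd}) applied termwise to orbit closures (equivalently, by invoking directly that the orbit/Schubert stratifications are Whitney, as noted after (\ref{itrans}), and that submersions are transversal to them).
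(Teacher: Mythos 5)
Your argument for part (4) is correct, and it takes a genuinely (if mildly) different route from the paper's. The paper disposes of (4) in one line by invoking the degeneracy-locus Theorem \ref{thm:ohmoto}, applied to the tautological bundle $\Hom(S,\C^n)$ over $\Gr_k(\C^n)$ and its tautological section; you instead re-run the underlying transversal-pullback mechanism (\ref{itrans}) through the principal $GL_k(\C)$-bundle $U\to\Gr_k(\C^n)$ of injective matrices. The two routes are close cousins (Theorem \ref{thm:ohmoto} is itself essentially a consequence of (\ref{itrans})), but yours has a small advantage: Theorem \ref{thm:ohmoto} as stated produces the \emph{non-equivariant} SSM class of the degeneracy locus, so to reach the $B^-_n$-equivariant statement of (4) one would still have to feed finite approximations of the Borel construction into it, whereas your free-quotient identification $H^*_{GL_k(\C)\times B^-_n}(U)\cong H^*_{B^-_n}(\Gr_k(\C^n))$ delivers the equivariant statement directly. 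Your handling of the locally closed cells via additivity (\ref{iadd}) over orbit closures, and your identification of ``restrict to $U$, then descend along $p$'' with the standard quotient presentation, are the right foundational checks and are unproblematic. Parts (1) and (3) are, exactly as in the paper, citations of Theorem \ref{thm:csmW}, Theorem \ref{thm:genWD} and Corollary \ref{cor:osztva}, so nothing is lost there.

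The gap is part (2): your proposal never addresses the CSM formulas for ordinary Schubert cells, which is one of the four claims of the theorem. The paper obtains (2) from (4) (or alternatively from \cite{AM1,RV}): since $\csm=c(T\Gr_k(\C^n))\cdot\ssm$ on the Grassmannian, one multiplies the representative from (3) by $c(T\Gr_k(\C^n))=c(\Hom(S,\C^n))/c(\Hom(S,S))$ in the quotient ring and checks that the result matches the displayed expressions --- concretely, that dividing the formula of (1) by $\prod_{u<v}(1+\alpha_u-\alpha_v)(1+\alpha_v-\alpha_u)$ converts the factor $\prod_{v=u+1}^k\frac{1+\alpha_u-\alpha_v}{\alpha_u-\alpha_v}$ into $\prod_{v=u+1}^k\frac{1}{(\alpha_u-\alpha_v)(1+\alpha_v-\alpha_u)}$, with the analogous bookkeeping for the residue form. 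This step is short but not vacuous, and as written your proposal proves a strictly smaller statement than the theorem.
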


\begin{proof}
Formulas in (1) and (3) are in this paper (Theorems \ref{thm:csmW}, \ref{thm:genWD}, Corollary \ref{cor:osztva}, Theorem \ref{thm:genWD}), (4) follows from (3) via Theorem
\ref{thm:ohmoto}. Formulas in (2) follow from (4)---or can be deduced from results in \cite{AM1, RV}.
\end{proof}

Let us comment on the two positivity results/conjectures known about these classes. One is the result of Huh \cite{huh} (conjectured earlier by Aluffi and Mihalcea
\cite{AM1}): the Schur expansion in (2) has non-negative coefficients. The other is our Conjecture \ref{conj:positivity}, that the Schur expansion in (3) has alternating
signs.

We are not aware of any connection between the two positivity properties. One fact which makes the comparison difficult is that the three-term factors $1+z_i-z_j$ are in the
numerator and denominator respectively in the two cases. Another key difference is that {\em not all} the infinitely many coefficients of the generating sequence in (2) are
positive, only the ones corresponding to partitions $\subset (n-k)^k$---which fact does not contradict to Huh's theorem since the Schur functions corresponding to the other
partitions are 0 in the quotient ring. However, our Conjecture \ref{conj:positivity} is about {\em all} the Schur coefficients of the series in (3)---even if $k\to \infty$.

\end{document}